\documentclass[11pt]{amsart}

\usepackage{amsmath,amssymb,amsthm}
\usepackage{mathtools}
\usepackage[margin=1in]{geometry}
\usepackage[hidelinks]{hyperref}
\usepackage{enumitem}
\usepackage{booktabs}
\usepackage{array}

\hypersetup{
  pdftitle={Fixed points and exact periods of Chebyshev polynomials modulo odd prime powers},
  pdfauthor={Chatchawan Panraksa and Aram Tangboonduangjit},
  pdfkeywords={Chebyshev polynomial; prime-power modulus; fixed-point lifting; exact period; arithmetic dynamics; p-adic dynamics}
}

\makeatletter
\@ifundefined{subjclassname@2020}{%
  \@namedef{subjclassname@2020}{\textup{2020} Mathematics Subject Classification}%
}{}
\makeatother

\theoremstyle{plain}
\newtheorem{theorem}{Theorem}[section]
\newtheorem{lemma}[theorem]{Lemma}
\newtheorem{proposition}[theorem]{Proposition}
\newtheorem{corollary}[theorem]{Corollary}

\theoremstyle{definition}
\newtheorem{definition}[theorem]{Definition}
\newtheorem{example}[theorem]{Example}

\theoremstyle{remark}
\newtheorem{remark}[theorem]{Remark}

\newcommand{\Z}{\mathbb{Z}}
\newcommand{\Q}{\mathbb{Q}}
\newcommand{\Fp}{\mathbb{F}_p}
\newcommand{\Fpp}{\mathbb{F}_{p^2}}
\newcommand{\nup}{\nu_p}
\newcommand{\cord}{\operatorname{ord}^{\pm}}
\newcommand{\Fix}{\operatorname{Fix}}
\newcommand{\Mob}{\mu_{\mathrm{Mob}}}

\title[Fixed points and exact periods of Chebyshev polynomials]{Fixed points and exact periods of Chebyshev polynomials modulo odd prime powers}

\author{Chatchawan Panraksa}

\author{Aram Tangboonduangjit}
\address{Mahidol University International College, Mahidol University, Nakhon Pathom 73170, Thailand}
\email[Chatchawan Panraksa]{chatchawan.pan@mahidol.ac.th}
\email[Aram Tangboonduangjit]{aram.tan@mahidol.ac.th}

\thanks{Corresponding author: Aram Tangboonduangjit, \texttt{aram.tan@mahidol.ac.th}.}

\date{}

\keywords{Chebyshev polynomial; prime-power modulus; fixed-point lifting; exact period; arithmetic dynamics; $p$-adic dynamics}

\subjclass[2020]{Primary 37P35, 37P05; Secondary 11T06, 37P20}

\begin{document}
\raggedbottom

\begin{abstract}
Passing from a prime modulus to a higher prime power can change both the lengths and the number of cycles of a polynomial map. We determine these changes for Chebyshev polynomials $T_n$ modulo $p^k$, for every degree $n\ge2$, every odd prime $p$, and every $k\ge1$. The results apply whether or not $T_n$ permutes the residue classes. We give explicit formulas for the number of fixed points, all possible cycle lengths, and the number of cycles of each length. The fixed-point formula involves four greatest common divisors and a correction specific to $p=3$. We explain this exception by showing how distinct rational fixed points become congruent modulo~$3$. When $p\mid n$, each cycle modulo $p$ corresponds to exactly one cycle of the same length modulo every $p^k$. When $p\nmid n$, we determine how many cycles arise from each cycle modulo $p$ and when longer cycles first appear. The proofs combine the classical relation between Chebyshev polynomials and power maps with $p$-adic arithmetic.
\end{abstract}

\maketitle

\section{Introduction}\label{sec:intro}

A polynomial map can have several cycles modulo a prime power that reduce to the same cycle modulo the underlying prime. These cycles may have different lengths, and their behavior at higher powers depends on more than the map over the prime field: derivatives and higher-order terms also enter the lifting problem. Understanding this dependence is a basic question in arithmetic dynamics; see~\cite{Narkiewicz1995,Silverman2007}. We study it for Chebyshev polynomials, determining the fixed-point counts and the lengths and numbers of cycles over every odd prime-power ring, without assuming that the polynomial permutes the residue classes.

Throughout, $p$ is an odd prime, $n\ge2$ is the degree being iterated, and $k\ge1$ is the exponent in the modulus $p^k$. We write $\Fp=\Z/p\Z$ for the field with $p$ elements and $\Fpp$ for its quadratic extension. A \emph{lift} of $a\in\Fp$ modulo $p^k$ is a residue class whose reduction modulo $p$ is~$a$. We use the Chebyshev polynomials of the first kind, $T_m\in\Z[x]$, characterized by $T_m(\cos\theta)=\cos(m\theta)$ for $m\ge0$. Two identities make their dynamics especially accessible. The composition law $T_m\circ T_n=T_{mn}$ gives $T_n^{\circ j}=T_{n^j}$ for $j\ge1$, where the superscript $\circ j$ denotes the $j$th iterate. Also, the \emph{averaging map}
\[
  F(z)=\frac{z+z^{-1}}2
\]
satisfies $T_m(F(z))=F(z^m)$. This identity holds for units in any ring in which $2$ is invertible. Thus iteration of a Chebyshev polynomial is related to a power map, and its periodic points can be counted through fixed points of other Chebyshev polynomials.

Over $\Fp$, the relevant parameters lie in two groups:
\[
  \Fp^\times\qquad\text{and}\qquad
  \mu_{p+1}=\{\zeta\in\Fpp^\times:\zeta^{p+1}=1\}.
\]
The latter is the norm-one group. Every $a\in\Fp$ has the form $F(\zeta)$ for a parameter in their union, and its parameters are determined up to inversion; Proposition~\ref{prop:two-source} recalls the proof. We call these groups the \emph{source groups}, such a $\zeta$ a \emph{source parameter} of~$a$, and its multiplicative order $e$ the \emph{source order} of~$a$. Inversion preserves order, so $e$ is well defined. The parameters $\zeta=\pm1$ and their images $a=\pm1$ are called \emph{boundary}; all other parameters and residues are \emph{nonboundary}. The boundary parameters are exactly the zeros of the derivative of $F$.

The finite-field dynamics is described in earlier work. Gassert~\cite{Gassert2014} treated prime degree, Qureshi and Panario~\cite{QP2019} described arbitrary degree, and Hutz and Patel~\cite{HutzPatel2022} obtained related periodic-point formulas. The finite-field counts used here are consequences of this theory. Some references use the Dickson polynomial $D_n(x,1)$, related to our normalization by $D_n(2x,1)=2T_n(x)$. Since $p$ is odd, multiplication by~$2$ conjugates these maps over $\Fp$ and over $\Z/p^k\Z$.

For general polynomial maps, desJardins and Zieve~\cite[Sec.~3]{desJardinsZieve2001} developed the affine description of cycle lifting, and Nara~\cite[Theorem~4.3 and Corollary~4.12]{Nara2025} describes how derivatives govern first and successive lifts. For Chebyshev maps, Yoshioka~\cite{Yoshioka2018} studied periods of individual iterated sequences, while Li, Lu, Tan, and Chen~\cite{LLTC2025} determined cycle distributions for permutation degrees when $p>3$. Tan and Li~\cite{TanLi2025} consider related lifting questions for a broader class of permutation maps. Lu, Dai, and Li~\cite{LuDaiLi2026} treat Chebyshev permutation graphs modulo powers of $2$ and $3$. Our aim is to determine the periodic part of the dynamics for arbitrary degrees, including those that fail the condition $\gcd(n,p^3-p)=1$ used in the prime-power permutation setting.

We first state the fixed-point formula. Write $|A|$ for the cardinality of a finite set~$A$, and, for a map $f:X\to X$, set $\Fix(f,X)=\{a\in X:f(a)=a\}$. A fixed point $a\in\Fp$ of a polynomial $f\in\Fp[x]$ is \emph{degenerate} if $f'(a)=1$, equivalently if it is a multiple root of $f(x)-x$, and \emph{nondegenerate} otherwise.\label{def:degen} For a degree $m\ge2$, put
\[
  N_{k,p}(m)=|\Fix(T_m,\Z/p^k\Z)|,\qquad
  d_p(m)=\bigl|\{a\in\Fix(T_m,\Fp):T_m'(a)=1\}\bigr|.
\]
For the fixed degree $n$, abbreviate $N_k=N_{k,p}(n)$ and $d=d_p(n)$. We write $\nu_p$ for the valuation on the field $\Q_p$ of $p$-adic numbers and its finite extensions, normalized by $\nu_p(p)=1$ and $\nu_p(0)=+\infty$. The bracket $[P]$ equals $1$ if the statement $P$ holds and $0$ otherwise.

\begin{theorem}[Fixed-point count over odd prime-power rings]
\label{thm:complete-fixed-count}
Let $p$ be an odd prime, $n\ge2$, and $k\ge1$. Set
\[
  G_{1,k}=\gcd(n-1,(p-1)p^{k-1}),\qquad
  G_{2,k}=\gcd(n+1,(p-1)p^{k-1}),
\]
\[
  G_{3,k}=\gcd(n-1,(p+1)p^{k-1}),\qquad
  G_{4,k}=\gcd(n+1,(p+1)p^{k-1}),
\]
and put $\beta=\gcd(n-1,2)$ and $s=\nup(n^2-1)$. Then
\begin{equation}\label{eq:complete-fixed-count}
  N_k=
  \frac{G_{1,k}+G_{2,k}+G_{3,k}+G_{4,k}-2\beta}{2}
  +\Delta_{p,n,k},
\end{equation}
where
\begin{equation}\label{eq:ternary-correction}
  \Delta_{p,n,k}
  =[p=3][3\nmid n][k\ge s+2]\,\beta\,3^s.
\end{equation}
Equivalently,
\begin{equation}\label{eq:complete-fixed-lifting}
  N_k=N_1+d\bigl(p^{\min(k-1,s)}-1\bigr)+\Delta_{p,n,k}.
\end{equation}
\end{theorem}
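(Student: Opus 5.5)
The plan is to prove the lifting form \eqref{eq:complete-fixed-lifting} and then convert it into the gcd form \eqref{eq:complete-fixed-count}. The organizing principle is to sort the fixed points modulo $p^k$ by their reduction modulo $p$. Let $a\in\Fix(T_n,\Fp)$. If $a$ is nondegenerate, Hensel's lemma applied to $T_n(x)-x$ gives exactly one lift fixed modulo every $p^k$. So $N_k-N_1$ comes only from the $d$ degenerate fixed points, and each must be shown to contribute $p^{\min(k-1,s)}$ lifts. The one exception is the boundary case at $p=3$, which supplies the term $\Delta_{p,n,k}$.

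\emph{Nonboundary degenerate points.} Here I would lift the parametrization of Proposition~\ref{prop:two-source} to $\Z/p^k\Z$. A residue $x$ with $x\not\equiv\pm1\pmod p$ equals $F(z)$, with $z$ determined up to inversion. The parameter $z$ lies either in $(\Z/p^k\Z)^\times$ or in the norm-one units of the unramified quadratic extension ring $\Z_{p^2}/p^k$. These groups are cyclic of orders $(p-1)p^{k-1}$ and $(p+1)p^{k-1}$. The map $F$ is étale away from $z=\pm1$, so the parametrization is a bijection on inversion classes. The condition $T_n(F(z))=F(z)$ becomes $F(z^n)=F(z)$, that is, $z^{n-1}=1$ or $z^{n+1}=1$. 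Over a fixed residue $a=F(\zeta)$, we have $T_n'(a)=\pm n$, with sign $+$ in the case $\zeta^{n-1}=1$. Hence $a$ is degenerate exactly when $n\equiv\pm1\pmod p$ with the matching sign. Its lifts are the solutions of $z^{n\mp1}=1$ in the kernel of reduction, which is a cyclic $p$-group of order $p^{k-1}$. The number of solutions is $p^{\min(k-1,\nup(n\mp1))}=p^{\min(k-1,s)}$, because $p$ is odd and so only one of $n\pm1$ is divisible by $p$.

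\emph{Boundary points $a=\pm1$.} This is the step I expect to be the main obstacle. We have $T_n'(\pm1)=n^2$, so $\pm1$ is degenerate iff $p\mid n^2-1$, and $\beta$ counts how many of $\pm1$ are fixed. The substitution $x=F(z)$ now ramifies: writing $x-1=(z-1)^2/(2z)$, the lifts of $1$ correspond to $z\equiv1$ modulo the prime of a ramified extension $\Q_p(\sqrt{pu})$. I would count the solutions of $T_n(x)=x$ with $x\equiv\pm1$ directly in the form $x=\pm\cosh$-type expansions. Concretely, I would parametrize by $z=\exp(w)$ on the ramified disc, reduce to $w(n\mp1)\in 2\pi i$-free congruences, and show the count is $p^{\min(k-1,s)}$ for $p\ge5$. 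For $p=3$, primitive cube roots of unity lie in the ramified field $\Q_3(\sqrt{-3})$. The corresponding rational fixed points $F(\omega)=-1/2$ are congruent to $1$ modulo $3$, so extra fixed points of the form $F(\zeta_{3^j})$ enter the boundary disc. Tracking when they become distinct modulo $3^k$ should yield the additional $\beta\,3^s$ once $k\ge s+2$. This is where I would spend most of the effort, checking small cases such as $n=2,4$ and $p=3$ numerically.

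\emph{Conversion to the gcd form.} Finally, summing the two cyclic-group counts gives \eqref{eq:complete-fixed-count}. Solutions of $z^{n\mp1}=1$ number $G_{1,k},G_{2,k}$ in the split group and $G_{3,k},G_{4,k}$ in the norm-one group. Solutions lying in both cases satisfy $z^2=1$, and there are $\beta$-dependent boundary overlaps. Quotienting by inversion then gives $\bigl(\sum G_{i,k}-2\beta\bigr)/2$ after the boundary parameters are accounted for. This must be matched against the lifting form, using $\nup(G_{i,k})=\min(k-1,\nup(n\mp1))$ for the appropriate indices.
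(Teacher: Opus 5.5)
Your treatment of nondegenerate residues and of nonboundary degenerate residues is essentially the paper's: Hensel's lemma, then lifts $F(\tilde\zeta\eta)$ with $\eta$ in a cyclic group of order $p^{k-1}$. You should add that $z^{n+\sigma}-1$ is a unit, since otherwise ``$z^{n-1}=1$ or $z^{n+1}=1$'' does not follow in $\Z/p^k\Z$.

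\textbf{The genuine gap is the boundary disk $\varepsilon+p\Z_p$.} This is exactly where the theorem goes beyond Hensel, and you offer only a plan there.

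For $p\ge5$ the plan can be completed, but the statement to prove is $\nup(T_n(x)-x)=\nup(x-\varepsilon)+s$ on the whole disk. Writing $x=\varepsilon F(z)$, one has $\nup(z-1)\ge\tfrac12>\tfrac1{p-1}$, so $z=\exp(w)$ with $\nup(w)=\nup(z-1)$. Then $T_n(x)-x=2\varepsilon\sinh(\tfrac{(n+1)w}{2})\sinh(\tfrac{(n-1)w}{2})$ and $x-\varepsilon=2\varepsilon\sinh^2(\tfrac w2)$. This is equivalent to the paper's factorization $Q_\varepsilon(x)=z^{1-n}A_{n-1}(z)A_{n+1}(z)$ with $\nup(A_m(z))=\nup(m)$.

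At $p=3$ the exponential parametrization breaks down precisely where it matters. One has $\nu_3(z-1)=\tfrac12=\tfrac1{p-1}$ exactly when $\nu_3(x-\varepsilon)=1$. That is the subdisk containing $-\varepsilon/2$, where $z$ is a primitive cube root of unity $\omega$ and $\log\omega=0$.

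Your picture of points $F(\zeta_{3^j})$ entering the disk and becoming distinct modulo $3^k$ is also inaccurate. For $2\le j\le s$, the roots $\varepsilon F(\zeta)$ with $\zeta$ of order $3^j$ satisfy $\nu_3(\varepsilon F(\zeta)-\varepsilon)=3^{1-j}<1$. So they lie outside $\Z_3$ and never give residue classes. Their $3^{j-1}$ members only add $1$ per level to $\nu_3(T_n(x)-x)$.

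The missing idea is to divide out both rational roots. The quotient $R_{n,\varepsilon}=(T_n(x)-x)/\bigl((x-\varepsilon)(2x+\varepsilon)\bigr)\in\Z[x]$ has constant valuation $s-1$ on the disk. This follows from $A_m(z)=A_3(z)A_{m/3}(z^3)$ and $\nu_3(z^3-1)\ge\tfrac32$, or from the root count just described. Put $a=\nu_3(x-\varepsilon)$ and $b=\nu_3(2x+\varepsilon)$; the identity $2(x-\varepsilon)-(2x+\varepsilon)=-3\varepsilon$ forces $\min(a,b)=1$. Hence $\nu_3(T_n(x)-x)=\max(a,b)+s$. This gives $3^{k-1}$ fixed lifts for $k\le s+1$ and $2\cdot3^s$ for $k\ge s+2$, which is the source of $\Delta_{p,n,k}$.

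Your gcd-form paragraph, read literally, is not a valid argument. Near $\pm1$, residues modulo $p^k$ generally need ramified parameters, and $F$ is not injective on inversion classes there. Moreover, both factors of $(z^{n-1}-1)(z^{n+1}-1)$ are nonunits. Counting solutions in the two cyclic groups of orders $(p\mp1)p^{k-1}$ and dividing by inversion produces the formula without $\Delta_{p,n,k}$, which fails at $p=3$. For example, with $p=3$, $n=2$, $k=3$ it gives $3$, while $N_3=6$.

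Use instead the route in your last sentence, which is the paper's. Prove \eqref{eq:complete-fixed-lifting} from the local counts. Then apply $\gcd(m,Mp^{k-1})=\gcd(m,M)\,p^{\min(k-1,\nup(m))}$ for $p\nmid M$. Combine this with $d=(G_{1,1}+G_{3,1})/2$ when $p\mid n-1$ and $d=(G_{2,1}+G_{4,1})/2$ when $p\mid n+1$.
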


The lifting form~\eqref{eq:complete-fixed-lifting} explains the structure of the count. Every nondegenerate fixed residue has one fixed lift at each level. A degenerate nonboundary residue has $p^{\min(k-1,s)}$ fixed lifts, and the same holds at a degenerate boundary residue when $p\ge5$. Proposition~\ref{prop:degen} gives an explicit formula for the number $d$ of degenerate fixed residues. The proof of the theorem in Section~\ref{sec:lifting} combines these local counts with the separate boundary calculation at~$p=3$.

The correction at $p=3$ comes from two distinct rational fixed points with the same reduction. If $3\nmid n$ and $\varepsilon\in\{\pm1\}$ satisfies $T_n(\varepsilon)=\varepsilon$, then $-\varepsilon/2$ is also fixed. These points are congruent modulo $3$ but distinct modulo $9$. For $k\ge s+2$, Corollary~\ref{cor:ternary-boundary-count} places the fixed lifts in two disjoint congruence classes, one around each rational point, each containing $3^s$ residues modulo $3^k$. Their combined contribution is therefore $2\cdot3^s$ instead of $3^s$, giving the additional term $\Delta_{3,n,k}$ after summing over the $\beta$ fixed boundary residues.

To state the period results, we need an order that allows either sign. We write $\operatorname{ord}_G(u)$ for the order of an element $u$ of a finite group~$G$, omitting the subscript when the group is clear. For $\gcd(n,M)=1$ and $M\ge2$, the notation $\operatorname{ord}_M(n)$ refers to the multiplicative order of the residue class of $n$ in $(\Z/M\Z)^\times$.

\begin{definition}[Signed order]\label{def:signed-order}
For integers $n\ge2$ and $M\ge1$ with $\gcd(n,M)=1$, the \emph{signed order} of $n$ modulo~$M$ is
\[
  \cord_M(n)=\min\{r\ge1:n^r\equiv\pm1\pmod M\}.
\]
For $M\ge3$, this is the order of the class of $n$ in $(\Z/M\Z)^\times/\{\pm1\}$. In particular, $\cord_1(n)=1$, and $\cord_2(n)=1$ when $n$ is odd.
\end{definition}

The finite-field criterion recalled in Proposition~\ref{thm:period} states that a residue $a\in\Fp$ of source order $e$ is periodic if and only if $\gcd(n,e)=1$. In that case its period is $t=\cord_e(n)$. Suppose first that $p\nmid n$. The iterate $T_n^{\circ t}$ is the return map at~$a$, and its derivative $\lambda=(T_n^{\circ t})'(a)\in\Fp^\times$ is the \emph{return multiplier}. Proposition~\ref{prop:return-multiplier} gives
\[
  \operatorname{ord}_{\Fp^\times}(\lambda)
  =\frac{\cord_{ep}(n)}{\cord_e(n)}.
\]
Theorem~\ref{thm:first-period-lift} translates this quotient into a precise first-lift rule: the possible periods modulo $p^2$ above $a$ are $t$ and $\cord_{ep}(n)$, with their multiplicities determined by whether these two values coincide.

The higher periods are controlled by the sequence
\[
  c_q=\cord_{ep^q}(n)\quad(q\ge0),\qquad
  R=\cord_p(n),\qquad w=\nu_p(n^{2R}-1).
\]
Proposition~\ref{prop:signed-order-growth} proves that
\[
  c_0=t,\qquad c_q=c_1p^{\max(0,q-w)}\quad(q\ge1).
\]
Thus, after a possible initial increase from $c_0$ to $c_1$, the sequence is constant through $q=w$ and then grows by a factor of $p$ at each step. Every lift of $a$ modulo $p^k$ is periodic. If $p\ge5$ or $a\notin\{\pm1\}$, the periods that occur are precisely the distinct values among $c_0,\ldots,c_{k-1}$, and Theorem~\ref{thm:explicit-period-tower} gives the number of points of each period. Theorem~\ref{thm:ternary-boundary-periods} gives the distribution for the boundary residues when $p=3$; there, all periods are powers of $3$, with separate multiplicity formulas. These statements concern point counts above a single residue; the corresponding cycle counts above an entire cycle are given in Corollary~\ref{cor:cycle-count-tower} and the discussion following Theorem~\ref{thm:ternary-boundary-periods}.

When $p\mid n$, the behavior is simpler: reduction modulo~$p$ is a bijection on periodic points and preserves their periods (Proposition~\ref{prop:p-divides-n}). Thus every periodic residue has exactly one periodic lift at each level. For all degrees, the identity $T_n^{\circ j}=T_{n^j}$ also allows us to apply Theorem~\ref{thm:complete-fixed-count} to every iterate. M\"obius inversion then gives the number of points of any prescribed exact period modulo $p^k$ (Corollary~\ref{cor:Prk-mobius}).

The proofs use the different local behavior of $F$ at boundary and nonboundary parameters. At a nonboundary parameter its derivative is a unit, and Hensel's lemma converts fixed-point lifting modulo $p^k$ into a power equation in a cyclic group of order $p^{k-1}$. At the boundary parameters the derivative vanishes, and the count instead follows from polynomial factorizations and $p$-adic valuation estimates. For $p=3$, the second rational fixed point supplies the additional factor needed for the valuation argument. For $p\nmid n$, the period sequences just described contain all possible periods above each periodic residue and form divisibility chains. Their fixed sets are therefore nested; after repeated periods are removed, successive differences give the exact-period multiplicities.

In the permutation cases, the cycle counts recover those of Li--Lu--Tan--Chen~\cite[Proposition~7]{LLTC2025} for $p>3$ and Lu--Dai--Li~\cite[Propositions~4 and~5]{LuDaiLi2026} for $p=3$. The argument here applies to arbitrary degrees and accounts for the exceptional boundary counts through the two rational fixed points. Precise parameter comparisons follow Corollary~\ref{cor:cycle-count-tower} and Theorem~\ref{thm:ternary-boundary-periods}.

Section~\ref{sec:prelim} records the parameter identities and finite-field counts. Section~\ref{sec:lifting} proves Theorem~\ref{thm:complete-fixed-count}. Section~\ref{sec:period-Fp} relates signed orders to return multipliers and determines the first lifts. Section~\ref{sec:higher-level} gives the higher period multiplicities and global exact-period counts.

\section{Parameters and fixed points over finite fields}\label{sec:prelim}

\subsection{Identities and notation}
We first record the identities needed for the parameter calculations. In addition to the first-kind polynomials introduced in Section~\ref{sec:intro}, we use the Chebyshev polynomials of the second kind, defined by
\[
 U_0=1,\qquad U_1=2x,\qquad U_{m+1}=2xU_m-U_{m-1}\quad(m\ge1).
\]
The first-kind polynomials satisfy $T_0=1$, $T_1=x$, and $T_{m+1}=2xT_m-T_{m-1}$ for $m\ge1$. Both families belong to $\Z[x]$, and
\begin{equation}\label{eq:TU-def}
 U_m(\cos\theta)\sin\theta=\sin((m+1)\theta)\qquad(m\ge0).
\end{equation}
The standard identities
\[
 T_m'=mU_{m-1},\qquad
 T_m(x)^2-1=(x^2-1)U_{m-1}(x)^2\qquad(m\ge1)
\]
and the values
\[
 T_m(1)=1,\quad T_m(-1)=(-1)^m,\quad
 U_{m-1}(1)=m,\quad U_{m-1}(-1)=(-1)^{m-1}m
\]
will be used in the lifting arguments; see~\cite{Rivlin1990,MH2003}. We also use the parity identity $T_m(-x)=(-1)^mT_m(x)$. For the Dickson normalization mentioned in the introduction, see~\cite{Dickson1896,LMT1993,WangYucas2012}.

Induction on the recurrences gives the parameter identities
\begin{equation}\label{eq:param}
\begin{aligned}
 T_m\!\left(\frac{z+z^{-1}}2\right)&=\frac{z^m+z^{-m}}2,\\
 U_{m-1}\!\left(\frac{z+z^{-1}}2\right)(z-z^{-1})&=z^m-z^{-m}
 \qquad(m\ge1).
\end{aligned}
\end{equation}
These hold in the Laurent polynomial ring $\Z[1/2][z,z^{-1}]$, so they may be evaluated at any unit of a commutative ring in which $2$ is invertible. The second identity is written without division by $z-z^{-1}$ so that it remains valid at the boundary parameters as well.

We retain the notation of Section~\ref{sec:intro}. For a periodic residue $a\in\Z/p^k\Z$, its \emph{exact period} $\operatorname{per}_{p^k}(a)$ is the least $r\ge1$ such that $T_n^{\circ r}(a)=a$. We write $P_r^{(k)}$ for the number of points of exact period~$r$, so $P_r^{(k)}/r$ is the number of cycles of length~$r$. Euler's totient function is denoted by $\varphi$, and the M\"obius function by $\Mob$. The normalized valuation $\nu_p$ may take fractional values in ramified extensions of~$\Q_p$.

We shall also use the following form of Hensel's lemma~\cite[Ch.~I, Sec.~6]{Koblitz1984}.
\begin{lemma}[Hensel's lemma]\label{thm:hensel}
Let $R$ be a complete discrete valuation ring with maximal ideal $\mathfrak m$, and let $g\in R[x]$. Suppose $\bar a\in R/\mathfrak m$ is a simple root of the reduction $\bar g$, that is, $\bar g(\bar a)=0$ and $\bar g'(\bar a)\ne0$. Then there is exactly one root of $g$ in $R$ reducing to $\bar a$. For every $k\ge1$, there is also exactly one root modulo $\mathfrak m^k$ reducing to~$\bar a$.
\end{lemma}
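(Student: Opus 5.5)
We will argue by Newton (Taylor) iteration, proving the statement modulo $\mathfrak m^k$ first and then passing to the limit using completeness. The one algebraic tool is the Taylor expansion in $R[x]$: for $a,h\in R$,
\[
  g(a+h)=g(a)+g'(a)h+h^2\,c(a,h)\qquad\text{for some } c(a,h)\in R.
\]
This holds because $g(x+y)-g(x)-g'(x)y$ is divisible by $y^2$ in $R[x,y]$, and it needs no division by factorials. Throughout, fix any $a\in R$ reducing to $\bar a$. Then $g'(a)$ is a unit of $R$, since its reduction $\bar g'(\bar a)$ is nonzero and $R$ is local.

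\emph{Uniqueness modulo $\mathfrak m^k$.}
\begin{itemize}
  \item Suppose $b,b'\in R$ both reduce to $\bar a$ and satisfy $g(b)\equiv g(b')\equiv0\pmod{\mathfrak m^k}$. Write $b'=b+h$ with $h\in\mathfrak m$.
  \item Taylor's formula gives $0\equiv g(b')-g(b)=h\bigl(g'(b)+h\,c(b,h)\bigr)\pmod{\mathfrak m^k}$.
  \item The factor $g'(b)+h\,c(b,h)$ reduces to $\bar g'(\bar a)\ne0$, so it is a unit.
  \item Hence $h\in\mathfrak m^k$, that is, $b'\equiv b\pmod{\mathfrak m^k}$.
\end{itemize}
The same computation, applied with $b,b'$ exact roots in $R$, shows that $g(b)=g(b')=0$ forces $h\in\bigcap_k\mathfrak m^k=0$. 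So a root of $g$ in $R$ reducing to $\bar a$ is unique.

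\emph{Existence.}
\begin{itemize}
  \item Set $a_1=a$, so $g(a_1)\in\mathfrak m$.
  \item Given $a_k$ reducing to $\bar a$ with $g(a_k)\in\mathfrak m^k$, put $a_{k+1}=a_k-g(a_k)\,g'(a_k)^{-1}$. This is legitimate because $g'(a_k)$ is a unit.
  \item Then $a_{k+1}\equiv a_k\pmod{\mathfrak m^k}$, and with $h=-g(a_k)g'(a_k)^{-1}$ Taylor's formula gives $g(a_{k+1})=h^2c(a_k,h)\in\mathfrak m^{2k}\subseteq\mathfrak m^{k+1}$.
\end{itemize}
This produces a root modulo every $\mathfrak m^k$, and together with uniqueness it yields exactly one root modulo $\mathfrak m^k$ above $\bar a$. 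The sequence $(a_k)$ is Cauchy for the $\mathfrak m$-adic topology, so by completeness it converges to some $\alpha\in R$. Since $g(\alpha)\equiv g(a_k)\equiv0\pmod{\mathfrak m^k}$ for every $k$, we get $g(\alpha)\in\bigcap_k\mathfrak m^k=0$, and $\alpha$ reduces to $\bar a$.

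No step is a serious obstacle. The point needing the most care is the uniqueness claim modulo $\mathfrak m^k$. There one must factor $g(b')-g(b)$ exactly as $h$ times a unit, rather than merely estimate valuations, so that the conclusion $h\in\mathfrak m^k$ holds at every finite level, including $k=1$ and $k=2$.
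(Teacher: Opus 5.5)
Your proof is correct. The factorization $g(b+h)-g(b)=h\bigl(g'(b)+h\,c(b,h)\bigr)$, whose second factor is a unit, gives uniqueness at every finite level and, after intersecting the $\mathfrak m^k$, uniqueness in $R$. Newton's iteration together with completeness then gives existence.

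The paper does not prove this lemma itself; it quotes it from Koblitz, Ch.~I, Sec.~6, whose argument has a different shape. There a root modulo $p^{k}$ is lifted to a root modulo $p^{k+1}$ by adding one digit $b p^{k}$. The resulting congruence $g(a_k)+g'(a_k)\,b\,p^k\equiv0\pmod{p^{k+1}}$ is linear in $b$ with unit coefficient, so it has exactly one solution $b$ modulo $p$. Existence and uniqueness at each level then come out together by induction, since any root modulo $p^{k+1}$ must reduce to the unique root modulo $p^k$. The $p$-adic root is the resulting digit sequence.

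Your route separates the two parts. Uniqueness comes from one exact factorization valid at all levels at once. Existence comes from Newton steps, which converge quadratically ($g(a_{k+1})\in\mathfrak m^{2k}$), more than is needed.

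A small advantage of your version is its generality. It is written for an arbitrary complete discrete valuation ring, which is how the lemma is stated and used in the paper, for instance over $\mathcal O_K$ for the unramified quadratic extension. Koblitz's proof is written for $\Z_p$ and needs the routine replacement of the digits $0,\dots,p-1$ by a set of residue-field representatives (and of $p$ by a uniformizer in general).
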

Our applications use $R=\Z_p$ or the ring of integers of the unramified quadratic extension of~$\Q_p$. In particular, applying the lemma to $T_n(x)-x$ shows that each nondegenerate fixed residue has exactly one fixed lift modulo every~$p^k$.

\subsection{The two-source parametrization}\label{sec:dynamics}
We now justify the parametrization introduced in Section~\ref{sec:intro} and identify its fibers. This description is standard in the finite-field theory of Chebyshev maps; see~\cite{Gassert2014,QP2019,RosenScherrWeissZieve2012}.

For $a\in\Fp$, set
\[
 H_a(Z)=Z^2-2aZ+1.
\]

\begin{lemma}[Parameter identities over $\Fp$]\label{lem:zeta}
The roots of $H_a$ lie in $\Fpp^\times$ and form an inverse pair, with repetition when $a=\pm1$. For either root $\zeta$, one has $a=F(\zeta)$ and
\[
 T_n(a)=\frac{\zeta^n+\zeta^{-n}}2.
\]
If $a\ne\pm1$, then $\zeta\ne\pm1$, the two roots are distinct, and
\[
 U_{n-1}(a)=\frac{\zeta^n-\zeta^{-n}}{\zeta-\zeta^{-1}}.
\]
\end{lemma}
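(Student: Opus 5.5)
The statement to prove is Lemma~\ref{lem:zeta}. The plan is to work entirely in $\Fpp$, using that $p$ is odd so that $2$ is invertible. First, $H_a(Z)=Z^2-2aZ+1$ is a monic quadratic over $\Fp$. Every quadratic over $\Fp$ splits in $\Fpp$, since $\Fpp$ is the unique quadratic extension and contains a square root of every element of $\Fp$. Hence the roots $\zeta,\zeta'$ lie in $\Fpp$.

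By Vieta's formulas, $\zeta\zeta'=1$ and $\zeta+\zeta'=2a$. The first relation shows both roots are nonzero and $\zeta'=\zeta^{-1}$, so they form an inverse pair. The second gives $a=(\zeta+\zeta^{-1})/2=F(\zeta)$. A repeated root occurs exactly when the discriminant $4a^2-4=4(a^2-1)$ vanishes, that is, when $a=\pm1$; this settles the repetition clause.

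Next I would evaluate the Laurent identities~\eqref{eq:param} at the unit $\zeta$ of the commutative ring $\Fpp$, in which $2$ is invertible. The first identity gives at once
\[
  T_n(a)=T_n(F(\zeta))=\frac{\zeta^n+\zeta^{-n}}2 .
\]
For the $U$-identity, suppose $a\ne\pm1$. If $\zeta=\pm1$, then $a=F(\pm1)=\pm1$, a contradiction; hence $\zeta\ne\pm1$. It follows that $\zeta^2\ne1$, so $\zeta\ne\zeta^{-1}$ and the two roots are distinct. Moreover $\zeta-\zeta^{-1}=\zeta^{-1}(\zeta^2-1)\neq0$, so this element is invertible in the field $\Fpp$. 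Dividing the second identity in~\eqref{eq:param} (with $m=n$) by $\zeta-\zeta^{-1}$ then gives the stated formula for $U_{n-1}(a)$.

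No step here is a real obstacle. The only point needing care is that the identities in~\eqref{eq:param} were established in $\Z[1/2][z,z^{-1}]$, so they must be specialized along the ring map $z\mapsto\zeta$ into $\Fpp$; this map exists because $\zeta$ is a unit and $2$ is invertible for odd $p$.
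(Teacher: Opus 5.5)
Your proof is correct and follows essentially the same route as the paper: splitting over $\Fpp$, Vieta's relations and the discriminant $4(a^2-1)$, then specializing the Laurent identities~\eqref{eq:param} at the unit $\zeta$. The only cosmetic difference is that you rule out $\zeta=\pm1$ via $F(\pm1)=\pm1$, whereas the paper evaluates $H_a(\pm1)=2\mp2a$; these amount to the same observation.
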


\begin{proof}
A quadratic over $\Fp$ splits over $\Fpp$. The constant term of $H_a$ shows that its roots are nonzero and have product~$1$, while their sum is~$2a$. Its discriminant is $4(a^2-1)$, and $H_a(\pm1)=2\mp2a$. Thus the roots are distinct and different from $\pm1$ when $a\ne\pm1$. The formulas now follow from~\eqref{eq:param}; the denominator in the last formula is nonzero under this hypothesis.
\end{proof}

\begin{proposition}[Two-source parametrization]\label{prop:two-source}
The source groups $\Fp^\times$ and $\mu_{p+1}$ are cyclic of orders $p-1$ and $p+1$, respectively, and their intersection is $\{\pm1\}$. For $\zeta\in\Fpp^\times$,
\[
 F(\zeta)\in\Fp
 \quad\Longleftrightarrow\quad
 \zeta\in\Fp^\times\cup\mu_{p+1}.
\]
The restriction of $F$ to this union is surjective onto~$\Fp$, and its fibers are exactly the orbits under inversion. The fibers over $1$ and $-1$ are the singletons $\{1\}$ and $\{-1\}$; every other fiber is a pair $\{\zeta,\zeta^{-1}\}$ contained in exactly one source group. For $a\ne\pm1$, this group is $\Fp^\times$ when $H_a$ splits over $\Fp$, and $\mu_{p+1}$ when $H_a$ is irreducible over~$\Fp$.
\end{proposition}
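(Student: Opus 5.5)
The plan is to check the group-theoretic facts first and then analyse the condition $F(\zeta)\in\Fp$ through the Frobenius $\sigma(x)=x^p$ on $\Fpp$.

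\emph{Cyclicity and intersection.} $\Fpp^\times$ is cyclic of order $p^2-1$. Hence $\Fp^\times$, the subgroup of order $p-1$, is cyclic, and $\mu_{p+1}$ is its unique subgroup of order $p+1$, also cyclic. An element of the intersection has order dividing $\gcd(p-1,p+1)=2$, since $p$ is odd. So the intersection is exactly the set of square roots of $1$, which is $\{\pm1\}$.

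\emph{The criterion.} $F(\zeta)\in\Fp$ holds if and only if $\sigma(F(\zeta))=F(\zeta)$, that is, $\zeta^p+\zeta^{-p}=\zeta+\zeta^{-1}$. I will factor this as
\[
  (\zeta^p-\zeta)(1-\zeta^{-p-1})=0,
\]
which one checks by expanding: $\zeta^p-\zeta-\zeta^{-1}+\zeta^{-p}$. The first factor vanishes exactly when $\zeta\in\Fp^\times$, and the second exactly when $\zeta^{p+1}=1$, i.e.\ $\zeta\in\mu_{p+1}$. This gives the equivalence.

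\emph{Surjectivity and fibers.} Surjectivity follows from Lemma~\ref{lem:zeta}: for $a\in\Fp$, a root $\zeta\in\Fpp^\times$ of $H_a$ satisfies $F(\zeta)=a$, and by the criterion $\zeta$ lies in the union. For the fibers, $F(\zeta)=a$ is equivalent to $H_a(\zeta)=0$ after multiplying by $2\zeta$. So the fiber over $a$ is exactly the root set of $H_a$, which is the inverse pair $\{\zeta,\zeta^{-1}\}$, an orbit under inversion. Since $H_{\pm1}=(Z\mp1)^2$, the fibers over $\pm1$ are the singletons $\{\pm1\}$. For $a\ne\pm1$ the two roots are distinct and differ from $\pm1$ by Lemma~\ref{lem:zeta}, so they avoid the intersection. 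Because the two groups are closed under inversion, both roots lie in the same group, and since they avoid the intersection, that group is unique. If $H_a$ splits over $\Fp$, the roots lie in $\Fp^\times$. If $H_a$ is irreducible, the roots are not in $\Fp$, so they must lie in $\mu_{p+1}$; indeed $\sigma$ swaps them, so $\zeta^p=\zeta^{-1}$.

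The only nonroutine step is the factorization in the criterion, and that is a direct algebraic check.
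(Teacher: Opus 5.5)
Your proof is correct and follows essentially the paper's route: cyclicity inside $\Fpp^\times$, a Frobenius criterion for $F(\zeta)\in\Fp$, and fibers identified with the inverse pairs of roots of $H_a$. The differences are cosmetic. Your factorization $(\zeta^p-\zeta)(1-\zeta^{-p-1})$ is the paper's fiber identity $(\zeta-\xi)(1-\zeta^{-1}\xi^{-1})$ with $\xi=\zeta^p$, whereas the paper proves the converse direction by letting Frobenius permute the roots of $H_a$; and you read off the fibers from $F(\zeta)=a\iff H_a(\zeta)=0$ rather than from that identity.
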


\begin{proof}
The group $\Fpp^\times$ is cyclic of order $p^2-1$, so its subgroups $\Fp^\times$ and $\mu_{p+1}$ are cyclic of the stated orders. The latter is the kernel of the norm map $\zeta\mapsto\zeta^{p+1}$. For $\zeta\in\Fp^\times$, the condition $\zeta^{p+1}=1$ is equivalent to $\zeta^2=1$, which proves the intersection statement.

If $\zeta$ belongs to either source group, then $\zeta^p=\zeta$ or $\zeta^p=\zeta^{-1}$, respectively, and hence $F(\zeta)^p=F(\zeta)$. Conversely, if $a=F(\zeta)\in\Fp$, Frobenius permutes the roots $\zeta,\zeta^{-1}$ of $H_a$. Therefore $\zeta^p=\zeta$ or $\zeta^p=\zeta^{-1}$, placing $\zeta$ in one of the two source groups. Lemma~\ref{lem:zeta} supplies such a parameter for every $a\in\Fp$, proving surjectivity.

For $\zeta,\xi\in\Fpp^\times$, the equality $F(\zeta)=F(\xi)$ is equivalent to
\[
 (\zeta-\xi)(1-\zeta^{-1}\xi^{-1})=0.
\]
Since $\Fpp$ is a field, this holds precisely when $\xi=\zeta$ or $\xi=\zeta^{-1}$. These coincide only at $\zeta=\pm1$. Outside these two parameters the source groups are disjoint, and each is closed under inversion. Finally, the roots of $H_a$ lie in $\Fp$ exactly when $H_a$ splits there; otherwise the preceding dichotomy places them in $\mu_{p+1}\setminus\Fp^\times$.
\end{proof}

\begin{definition}[Split and nonsplit points]\label{def:split}
A point $a\in\Fp\setminus\{\pm1\}$ is \emph{split} if $H_a$ splits over $\Fp$, and \emph{nonsplit} if $H_a$ is irreducible over $\Fp$. Equivalently, its source parameters lie in $\Fp^\times\setminus\{\pm1\}$ or in $\mu_{p+1}\setminus\{\pm1\}$, respectively.
\end{definition}

To count fixed points, we next translate $T_n(a)=a$ into two power equations in the source groups.

\begin{lemma}[Fixed-point factorization]\label{lem:fixed-factor}
For $a\in\Fp$ with source parameter $\zeta$,
\[
 T_n(a)=a
 \quad\Longleftrightarrow\quad
 \zeta^{n-1}=1\ \text{or}\ \zeta^{n+1}=1.
\]
If $a\ne\pm1$ is fixed, exactly one of the two power equations holds.
\end{lemma}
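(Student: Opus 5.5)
Using Lemma~\ref{lem:zeta}, we can rewrite both sides of $T_n(a)=a$ in terms of the source parameter. Since $a=F(\zeta)$ and $T_n(a)=F(\zeta^n)$, the condition $T_n(a)=a$ says that $F(\zeta^n)=F(\zeta)$, where both $\zeta$ and $\zeta^n$ lie in $\Fpp^\times$. We then apply the fiber computation from the proof of Proposition~\ref{prop:two-source}. For units $\zeta,\xi$ of the field $\Fpp$, the equality $F(\zeta)=F(\xi)$ is equivalent to $(\zeta-\xi)(1-\zeta^{-1}\xi^{-1})=0$. It therefore holds exactly when $\xi=\zeta$ or $\xi=\zeta^{-1}$. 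Taking $\xi=\zeta^n$, we find that $T_n(a)=a$ holds if and only if $\zeta^n=\zeta$ or $\zeta^n=\zeta^{-1}$, that is, $\zeta^{n-1}=1$ or $\zeta^{n+1}=1$. We can also derive this directly: $2(F(\zeta^n)-F(\zeta))=\zeta^{-n}(\zeta^{n}-\zeta)(\zeta^{n}-\zeta^{-1})\cdot\zeta^{0}$. More precisely, the computation $\zeta^n+\zeta^{-n}-\zeta-\zeta^{-1}=\zeta^{-n}(\zeta^{n}-\zeta)(\zeta^{n}-\zeta^{-1})$ is a one-line expansion, and we would check it explicitly. Because $\Fpp$ is a field, this yields the equivalence. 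Replacing $\zeta$ by $\zeta^{-1}$ leaves both power conditions unchanged, so the choice of source parameter does not matter.

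For the exclusivity claim, we suppose $a\ne\pm1$ and that both conditions hold. Then $\zeta^{n-1}=\zeta^{n+1}$, so $\zeta^2=1$ and $\zeta=\pm1$. This gives $a=F(\zeta)=\pm1$, a contradiction. Combined with the equivalence above, exactly one of the two equations holds at a nonboundary fixed point.

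We expect no real obstacle. The only point that needs care is keeping the argument valid in $\Fpp$ rather than in $\Fp$: the parameter may lie in $\mu_{p+1}\setminus\Fp^\times$. The factorization, however, uses only that $\Fpp$ is a field in which $2$ is invertible, which holds because $p$ is odd. At the boundary points $a=\pm1$, both conditions may hold simultaneously. This is consistent with the statement, which asserts exclusivity only for $a\ne\pm1$.
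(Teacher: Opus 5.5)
Your proof is correct and is essentially the paper's argument. The paper writes $T_n(F(\zeta))-F(\zeta)=(\zeta^{n-1}-1)(\zeta^{n+1}-1)/(2\zeta^n)$ and uses that $\Fpp$ is a field; this is the same factorization as your $\zeta^{-n}(\zeta^n-\zeta)(\zeta^n-\zeta^{-1})$ and as the fiber identity you borrow from Proposition~\ref{prop:two-source}, and your exclusivity step via $\zeta^2=1$ is identical (the stray factor $\zeta^{0}$ in your displayed identity should simply be dropped).
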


\begin{proof}
Identity~\eqref{eq:param} gives
\[
 T_n(F(\zeta))-F(\zeta)
 =\frac{(\zeta^{n-1}-1)(\zeta^{n+1}-1)}{2\zeta^n}.
\]
The denominator is nonzero, and a product in $\Fpp$ vanishes if and only if one of its factors vanishes. If both power equations hold, division gives $\zeta^2=1$, which forces $a=\pm1$.
\end{proof}

\begin{remark}
The fixed boundary residues are $1$ and, when $n$ is odd, $-1$. Thus their number is $\beta=\gcd(n-1,2)$, as in Theorem~\ref{thm:complete-fixed-count}. Their parameters satisfy both power equations in Lemma~\ref{lem:fixed-factor}; these are the only overlaps between the two equations in either source group.
\end{remark}

\subsection{Finite-field fixed points}\label{sec:N1}
The following formula is the fixed-point specialization of the known finite-field graph descriptions~\cite{Gassert2014,QP2019,HutzPatel2022}. We give a direct proof to account for the overlaps before turning to lifting.

\begin{proposition}\label{thm:N1}
For an odd prime $p$ and $n\ge2$,
\begin{equation}\label{eq:N1}
 N_1=\frac{G_1+G_2+G_3+G_4-2\beta}{2},
\end{equation}
where $G_i=G_{i,1}$ for $1\le i\le4$, with the notation of Theorem~\ref{thm:complete-fixed-count}.
\end{proposition}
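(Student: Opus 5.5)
We count fixed points through their source parameters, using Proposition~\ref{prop:two-source} and Lemma~\ref{lem:fixed-factor}. By Lemma~\ref{lem:fixed-factor}, a residue $a=F(\zeta)$ is fixed exactly when its parameter $\zeta$ lies in
\[
 S=\{\zeta\in\Fp^\times\cup\mu_{p+1}:\zeta^{n-1}=1\ \text{or}\ \zeta^{n+1}=1\}.
\]
Since $\zeta^{\pm1}$ satisfy the same power equations, $S$ is closed under inversion. By Proposition~\ref{prop:two-source}, $F$ restricted to $S$ is a surjection onto $\Fix(T_n,\Fp)$ whose fibers are the inversion orbits. So $N_1$ equals the number of inversion orbits in $S$. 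The plan is to compute $|S|$ first and then pass to orbits.

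To compute $|S|$, split $S$ according to the source group and use inclusion–exclusion twice. In the cyclic group $\Fp^\times$ of order $p-1$, the solutions of $\zeta^{n-1}=1$ form a subgroup of order $G_1=\gcd(n-1,p-1)$. The solutions of $\zeta^{n+1}=1$ form a subgroup of order $G_2=\gcd(n+1,p-1)$. By the proof of Lemma~\ref{lem:fixed-factor}, their intersection is $\{\zeta:\zeta^2=1\}\cap\{\zeta^{n-1}=1\}$. This is $\{1\}$ when $n$ is even and $\{\pm1\}$ when $n$ is odd, so it has $\beta$ elements. Hence $S\cap\Fp^\times$ has $G_1+G_2-\beta$ elements. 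The same argument in the cyclic group $\mu_{p+1}$ of order $p+1$ gives $G_3+G_4-\beta$ elements. The two groups meet only in $\{\pm1\}$, and $S\cap\{\pm1\}$ has exactly $\beta$ elements. Therefore
\[
 |S|=G_1+G_2+G_3+G_4-3\beta.
\]

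It remains to pass to orbits. The boundary elements of $S$ are fixed by inversion and give $\beta$ singleton orbits. The remaining $|S|-\beta=G_1+G_2+G_3+G_4-4\beta$ elements fall into orbits of size exactly two, again by Proposition~\ref{prop:two-source}. Adding the two contributions gives
\[
 N_1=\beta+\frac{G_1+G_2+G_3+G_4-4\beta}{2}=\frac{G_1+G_2+G_3+G_4-2\beta}{2}.
\]

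The main care needed is the overlap bookkeeping: the boundary parameters are counted in all four subgroups and in both source groups, and must be corrected consistently. I would organize this by first counting the nonboundary parameters directly. In each of the four subgroups there are $G_i-\beta$ of them, and by Lemma~\ref{lem:fixed-factor} no nonboundary parameter is counted twice. This gives $\sum_i(G_i-\beta)$ nonboundary parameters, hence half that many nonboundary fixed points, plus $\beta$ boundary fixed points, which is the same formula.
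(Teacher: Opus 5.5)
Your proposal is correct and follows the paper's proof essentially step for step. It uses the same set $S$ of source parameters, the same gcd counts with inclusion--exclusion within and across the two source groups giving $|S|=G_1+G_2+G_3+G_4-3\beta$, and the same passage to inversion orbits; your alternative bookkeeping through the $G_i-\beta$ nonboundary parameters in each of the four subgroups is also valid, because each of those subgroups contains exactly $\beta$ boundary parameters.
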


\begin{proof}
Let $S$ be the set of parameters in $\Fp^\times\cup\mu_{p+1}$ satisfying $\zeta^{n-1}=1$ or $\zeta^{n+1}=1$. By Lemma~\ref{lem:fixed-factor} and Proposition~\ref{prop:two-source}, the fixed points of $T_n$ correspond bijectively to the inversion orbits on~$S$.

In a cyclic group of order $M$, the equation $z^j=1$ has $\gcd(j,M)$ solutions. The two power equations therefore have $G_1$ and $G_2$ solutions in $\Fp^\times$, and $G_3$ and $G_4$ solutions in $\mu_{p+1}$. Within each group, their intersection consists of the $\beta$ fixed boundary parameters. Hence
\[
 |S\cap\Fp^\times|=G_1+G_2-\beta,\qquad
 |S\cap\mu_{p+1}|=G_3+G_4-\beta.
\]
These two sets also intersect in exactly those $\beta$ parameters, giving
\[
 |S|=G_1+G_2+G_3+G_4-3\beta.
\]
Inversion fixes the $\beta$ boundary parameters and pairs the remaining elements. It follows that
\[
 N_1=\beta+\frac{|S|-\beta}{2}
     =\frac{G_1+G_2+G_3+G_4-2\beta}{2}.\qedhere
\]
\end{proof}

\begin{example}\label{ex:N1}
For $p=7$ and $n=5$, one has $(G_1,G_2,G_3,G_4)=(2,6,4,2)$ and $\beta=2$, so $N_1=5$. Directly,
\[
 T_5(x)-x\equiv2x(x^2-1)(x^2-2)\pmod7,
\]
whose roots in $\mathbb F_7$ are exactly $0,1,3,4,6$.
\end{example}

\begin{corollary}[Chebyshev--Fermat]\label{cor:fermat}
For every odd prime $p$, the map $T_p$ fixes every element of $\Fp$. In particular, $N_1=p$ when $n=p$.
\end{corollary}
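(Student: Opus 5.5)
The plan is to show that $T_p(F(\zeta))=F(\zeta)$ for every source parameter, using the two-source parametrization of Proposition~\ref{prop:two-source}. Take $a\in\Fp$ and let $\zeta\in\Fp^\times\cup\mu_{p+1}$ be a source parameter with $a=F(\zeta)$. By Lemma~\ref{lem:zeta},
\[
 T_p(a)=\frac{\zeta^p+\zeta^{-p}}2.
\]
The Frobenius gives $\zeta^p=\zeta$ if $\zeta\in\Fp^\times$, and $\zeta^p=\zeta^{-1}$ if $\zeta\in\mu_{p+1}$. In either case $\zeta^p+\zeta^{-p}=\zeta+\zeta^{-1}$, so $T_p(a)=F(\zeta)=a$. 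Since every $a\in\Fp$ has such a parameter, $T_p$ fixes all of $\Fp$, and hence $N_1=p$ when $n=p$.

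Alternatively, one can use Lemma~\ref{lem:fixed-factor}. It suffices to check that every source parameter satisfies $\zeta^{p-1}=1$ or $\zeta^{p+1}=1$. This holds because the orders of the source groups are $p-1$ and $p+1$.

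The proof has no real obstacle. As a consistency check, Proposition~\ref{thm:N1} with $n=p$ gives
\[
 G_1=p-1,\qquad G_2=\gcd(p+1,p-1)=2,\qquad G_3=\gcd(p-1,p+1)=2,\qquad G_4=p+1,
\]
with $\beta=2$. The formula then yields
\[
 N_1=\frac{(p-1)+2+2+(p+1)-4}{2}=p,
\]
in agreement with the direct argument.
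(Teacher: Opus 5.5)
Your proposal is correct and is essentially the paper's proof: write $a=F(\zeta)$ with $\zeta$ in a source group, note that $\zeta^p\in\{\zeta,\zeta^{-1}\}$, and conclude $T_p(a)=F(\zeta^p)=F(\zeta)=a$ via the parameter identity. Your alternative through Lemma~\ref{lem:fixed-factor} and your check against Proposition~\ref{thm:N1}, which gives $(G_1,G_2,G_3,G_4)=(p-1,2,2,p+1)$ and $\beta=2$, are also correct.
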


\begin{proof}
Write $a=F(\zeta)$ with $\zeta$ in a source group. Then $\zeta^p$ is either $\zeta$ or $\zeta^{-1}$, so~\eqref{eq:param} gives $T_p(a)=F(\zeta^p)=F(\zeta)=a$.
\end{proof}

For related $p$-adic properties of $T_p$, see Diarra and Sylla~\cite{DiarraSylla2014}.

\begin{remark}[Rational fixed points and reduction modulo $3$]\label{rem:universal-fixed}
Some fixed points can be identified over $\Q$ before reduction. If $\zeta$ is a complex root of unity of order $h$, the factorization used in Lemma~\ref{lem:fixed-factor} gives
\[
 T_n(F(\zeta))=F(\zeta)
 \quad\Longleftrightarrow\quad n\equiv\pm1\pmod h.
\]
For $h\in\{1,2,3,4,6\}$, this yields the following rational values and conditions:
\begingroup
\renewcommand{\arraystretch}{1.25}
\setlength{\arraycolsep}{9pt}
\[
\begin{array}{r r l}
\toprule
h & a=F(\zeta) & \text{Condition for }T_n(a)=a\text{ over }\Q\\
\midrule
1 & 1 & \text{all }n\\
2 & -1 & n\text{ odd}\\
4 & 0 & n\text{ odd}\\
3 & -1/2 & 3\nmid n\\
6 & 1/2 & n\equiv\pm1\pmod6\\
\bottomrule
\end{array}
\]
\endgroup
Here $h$ is an order in characteristic zero, and need not equal the source order after reduction. Under the respective conditions in the table, these rational points are fixed modulo $p^k$ for every odd prime $p$ and every $k\ge1$, since their denominators are invertible. All five are fixed whenever $n\equiv\pm1\pmod6$. In particular, this holds under the permutation condition $\gcd(n,p^3-p)=1$, because $6\mid p^3-p$. For $p\ge5$ the five residues are distinct; in the permutation setting they are the fixed points listed in~\cite[Proposition~4]{LLTC2025}.

For $p=3$, the only possible source orders are $1,2,4$, and the five rational points reduce to just three residues. If $3\nmid n$, both $1$ and $-1/2$ are fixed and reduce to~$1$; if $n$ is also odd, both $-1$ and $1/2$ are fixed and reduce to~$-1$. Thus each fixed boundary residue contains two distinct rational fixed points when $3\nmid n$. Subsection~\ref{subsec:ternary} computes their contribution to the fixed-point count at higher powers of~$3$.
\end{remark}

\section{Fixed-point lifting}\label{sec:lifting}

We prove Theorem~\ref{thm:complete-fixed-count} by counting fixed lifts above each fixed residue modulo~$p$. For $a_0\in\Fp$, its \emph{residue disk} is the set of $p$-adic integers reducing to~$a_0$; the classes in this disk modulo $p^k$ are precisely the lifts of~$a_0$. Hensel's lemma handles the nondegenerate residues. For degenerate residues, we use the parameter map away from $\pm1$ and polynomial factorizations at the boundary. The latter require a separate calculation when $p=3$.

\subsection{Derivatives and the first lift}
We begin by determining which fixed residues are degenerate. The source parameter identifies the sign in the derivative formula $T_n'(a_0)=\pm n$ away from the boundary, refining the reduction modulo $p$ of~\cite[Lemma~4]{LLTC2025}.

\begin{proposition}[Degeneracy criterion]\label{prop:degen}
Let $a_0\in\Fp$ be fixed by $T_n$.
\begin{enumerate}[label=\textup{(\alph*)},nosep]
\item If $a_0\ne\pm1$, choose a source parameter $\zeta_0$. There is a unique sign $\sigma\in\{\pm1\}$ such that $\zeta_0^{n-\sigma}=1$, and
\[
 T_n'(a_0)=\sigma n\quad\text{in }\Fp.
\]
Thus $a_0$ is degenerate if and only if $p\mid n-\sigma$.
\item At a fixed boundary point $\varepsilon\in\{\pm1\}$, one has $T_n'(\varepsilon)=n^2$. Thus $\varepsilon$ is degenerate if and only if $p\mid n^2-1$.
\end{enumerate}
In particular, degenerate fixed residues exist if and only if $p\mid n^2-1$. Their number is
\begin{equation}\label{eq:d-count}
 d=
 \begin{cases}
  (G_1+G_3)/2,&p\mid n-1,\\[1mm]
  (G_2+G_4)/2,&p\mid n+1,\\[1mm]
  0,&p\nmid n^2-1,
 \end{cases}
\end{equation}
where $G_i=G_{i,1}$ as in Proposition~\ref{thm:N1}.
\end{proposition}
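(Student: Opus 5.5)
We handle the nonboundary and boundary cases separately, and then count degenerate residues using the parametrization from Proposition~\ref{thm:N1}.

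\emph{Part (a).} Let $a_0\ne\pm1$ be fixed. By Lemma~\ref{lem:fixed-factor}, exactly one of $\zeta_0^{n-1}=1$ and $\zeta_0^{n+1}=1$ holds, which defines $\sigma$. This choice does not depend on which source parameter we pick, since replacing $\zeta_0$ by $\zeta_0^{-1}$ preserves each equation. The equation $\zeta_0^{n-\sigma}=1$ gives $\zeta_0^{n}=\zeta_0^{\sigma}$, and hence $\zeta_0^{-n}=\zeta_0^{-\sigma}$. Now use $T_n'=nU_{n-1}$ and the formula $U_{n-1}(a_0)=(\zeta_0^n-\zeta_0^{-n})/(\zeta_0-\zeta_0^{-1})$ from Lemma~\ref{lem:zeta}, which is valid because $\zeta_0\ne\pm1$. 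This gives $U_{n-1}(a_0)=(\zeta_0^\sigma-\zeta_0^{-\sigma})/(\zeta_0-\zeta_0^{-1})=\sigma$, so $T_n'(a_0)=\sigma n$. Consequently $T_n'(a_0)=1$ holds exactly when $\sigma n\equiv1$, that is, when $n\equiv\sigma \pmod p$, which is $p\mid n-\sigma$.

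\emph{Part (b).} At $\varepsilon=1$ we have $T_n'(1)=nU_{n-1}(1)=n^2$. At $\varepsilon=-1$, which is fixed only for odd $n$, we get $T_n'(-1)=n\cdot(-1)^{n-1}n=n^2$. Therefore $\varepsilon$ is degenerate if and only if $n^2\equiv1$, which is $p\mid n^2-1$.

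\emph{Counting.} If $p\nmid n^2-1$, then no nonboundary point is degenerate because $n\not\equiv\pm1$, and no boundary point is degenerate either, so $d=0$. Since $p$ is odd, $p$ cannot divide both $n-1$ and $n+1$. Suppose $p\mid n-1$. Then the degenerate fixed residues are the fixed boundary points together with the nonboundary fixed points with $\sigma=+1$. These are exactly the residues whose parameters satisfy $\zeta^{n-1}=1$ in some source group. Mirroring the proof of Proposition~\ref{thm:N1}, the parameter set is $\{\zeta\in\Fp^\times\cup\mu_{p+1}:\zeta^{n-1}=1\}$, and it has $G_1+G_3-\beta'$ elements, where $\beta'$ is the number of boundary parameters satisfying $\zeta^{n-1}=1$, counted in both groups. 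Here $\beta'=\beta$, because $-1$ satisfies $\zeta^{n-1}=1$ precisely when $n$ is odd. Passing to inversion orbits gives $d=\beta+(G_1+G_3-2\beta)/2=(G_1+G_3)/2$. The case $p\mid n+1$ is symmetric, using $\zeta^{n+1}=1$. Here both $1$ and $-1$ satisfy $\zeta^{n+1}=1$ when $n$ is odd, and since $p\mid n+1$ with $p$ odd, the parity of $n$ is not yet determined. The boundary parameters satisfying $\zeta^{n+1}=1$ are $1$ and, when $n$ is odd, $-1$, so there are $\beta$ of them. These are exactly the fixed boundary points, and they are degenerate by (b). So the same orbit count gives $(G_2+G_4)/2$. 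Finally, the existence statement follows: $d\ge1$ whenever $p\mid n^2-1$, since the fixed point $1$ is then degenerate.

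The main point needing care is this bookkeeping of boundary parameters in the count. The parameters $\pm1$ lie in both source groups, so the overlap correction must match the number of fixed boundary points; the derivative computations themselves are routine.
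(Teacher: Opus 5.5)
Your proof is correct and follows essentially the same route as the paper: the branch sign from Lemma~\ref{lem:fixed-factor} and the formula of Lemma~\ref{lem:zeta} give $U_{n-1}(a_0)=\sigma$, the boundary values of $U_{n-1}$ give $T_n'(\varepsilon)=n^2$, and $d$ is computed by counting solutions of $\zeta^{n-\sigma}=1$ in the two cyclic source groups, removing the $\beta$ shared boundary parameters, and passing to inversion orbits. Your count $\beta+(G_1+G_3-2\beta)/2$ is exactly the paper's $(|S|+\beta)/2$ with $|S|=G_1+G_3-\beta$, and the case $p\mid n+1$ is handled in the same way.
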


\begin{proof}
For a nonboundary fixed point, the existence and uniqueness of $\sigma$ follow from Lemma~\ref{lem:fixed-factor}. Since $\zeta_0^n=\zeta_0^\sigma$, Lemma~\ref{lem:zeta} gives
\[
 U_{n-1}(a_0)
 =\frac{\zeta_0^n-\zeta_0^{-n}}{\zeta_0-\zeta_0^{-1}}
 =\sigma.
\]
The identity $T_n'=nU_{n-1}$ proves~(a). At the boundary, the values of $U_{n-1}$ in Section~\ref{sec:prelim} give $T_n'(1)=n^2$ and $T_n'(-1)=(-1)^{n-1}n^2$. The point $-1$ is fixed only when $n$ is odd, proving~(b). These formulas show that degeneracy requires $p\mid n^2-1$; conversely, this divisibility makes the fixed point~$1$ degenerate.

Suppose $p\mid n-\sigma$ for a sign $\sigma\in\{\pm1\}$. Since $p$ is odd, this sign is unique. Parts~(a) and~(b) identify the degenerate fixed residues with the images under $F$ of
\[
 S_\sigma=\{\zeta\in\Fp^\times\cup\mu_{p+1}:\zeta^{n-\sigma}=1\}.
\]
Indeed, this equation selects exactly the degenerate nonboundary fixed points and includes every fixed boundary parameter. It has $\gcd(n-\sigma,p-1)$ solutions in the first source group and $\gcd(n-\sigma,p+1)$ in the second. The two sets intersect in the $\beta$ fixed boundary parameters, which are also exactly the elements of $S_\sigma$ fixed by inversion. Proposition~\ref{prop:two-source} therefore gives
\[
 d=\frac{|S_\sigma|+\beta}{2}
  =\frac{\gcd(n-\sigma,p-1)+\gcd(n-\sigma,p+1)}2.
\]
This is the first or second case of~\eqref{eq:d-count}; the remaining case follows from the absence of degenerate fixed residues.
\end{proof}

For a nonboundary fixed point, we call the unique sign $\sigma$ in Proposition~\ref{prop:degen}(a) its \emph{branch sign}. It is unchanged on replacing $\zeta_0$ by $\zeta_0^{-1}$. In the degenerate case,
\[
 s=\nu_p(n^2-1)=\nu_p(n-\sigma),\qquad p\nmid n+\sigma.
\]

We next construct a $p$-adic fixed lift. Throughout this section, $K/\Q_p$ denotes the unramified quadratic extension, $\mathcal O_K$ its ring of integers, and $\tau$ its nontrivial automorphism. If $R=\Z_p$ or $\mathcal O_K$ has residue field of size $q$, the \emph{Teichm\"uller lift} of a nonzero residue $\zeta_0$ is the unique root $\tilde\zeta\in R$ of $Z^{q-1}-1$ reducing to~$\zeta_0$. Hensel's lemma gives existence and uniqueness. If $\zeta_0$ has order $e$, applying the same lemma to $Z^e-1$ shows that $\tilde\zeta$ also has order~$e$.

The conclusion about degenerate lifts modulo $p^2$ agrees with the corresponding case of~\cite[Proposition~1]{LLTC2025}, proved there for permutation degrees and $p>3$. We give a direct construction for arbitrary degrees and state it with degree $m$ for later use with iterates.

\begin{lemma}[Local first lift]\label{lem:local-first-lift}
Let $m\ge2$, and let $a_0\in\Fp$ be fixed by $T_m$. There is a fixed point of $T_m$ in $\Z_p$ reducing to~$a_0$. Among the $p$ lifts of $a_0$ modulo $p^2$, exactly one is fixed if $a_0$ is nondegenerate for $T_m$, and all are fixed if $a_0$ is degenerate.
\end{lemma}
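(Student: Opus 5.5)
The plan has two parts: first construct a $p$-adic fixed point in the residue disk of $a_0$, then read off the count modulo $p^2$ from a first-order Taylor expansion.

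\emph{Existence of a $p$-adic fixed point.} Choose a source parameter $\zeta_0$ of $a_0$. It lies in $\Fp^\times$ or in $\mu_{p+1}$ (Proposition~\ref{prop:two-source}). By Lemma~\ref{lem:fixed-factor} it satisfies $\zeta_0^{m-\sigma}=1$ for some sign $\sigma$. Let $\tilde\zeta\in\mathcal O_K$ be its Teichm\"uller lift. It has the same order $e$ as $\zeta_0$, so $\tilde\zeta^{m-\sigma}=1$ still holds.

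Put $\tilde a=F(\tilde\zeta)$, which lies in $\mathcal O_K$ because $2$ is a unit. By~\eqref{eq:param},
\[
T_m(\tilde a)=F(\tilde\zeta^m)=F(\tilde\zeta^{\sigma})=\tilde a .
\]
It remains to see that $\tilde a\in\Z_p$.
\begin{itemize}[nosep]
\item If $\zeta_0\in\Fp^\times$, then $\tilde\zeta\in\Z_p$, and $\tilde a\in\Z_p$ immediately.
\item If $\zeta_0\in\mu_{p+1}$, then $\tau(\tilde\zeta)$ is a root of $Z^{p^2-1}-1$ reducing to $\zeta_0^p=\zeta_0^{-1}$. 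By uniqueness of Teichm\"uller lifts, $\tau(\tilde\zeta)=\tilde\zeta^{-1}$. Hence $\tau(\tilde a)=\tilde a$, so $\tilde a\in\Z_p$.
\end{itemize}
In both cases $\tilde a$ reduces to $F(\zeta_0)=a_0$. This argument also covers the boundary parameters $\zeta_0=\pm1$, where $\tilde\zeta=\pm1$.

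\emph{Counting fixed lifts modulo $p^2$.} Write a general lift as $x=\tilde a+pt$ with $t\in\{0,\dots,p-1\}$. Since $T_m\in\Z[x]$, Taylor expansion gives
\[
T_m(\tilde a+pt)-(\tilde a+pt)\equiv \bigl(T_m'(\tilde a)-1\bigr)pt \pmod{p^2},
\]
using $T_m(\tilde a)=\tilde a$ exactly. So $x$ is fixed modulo $p^2$ if and only if $\bigl(T_m'(a_0)-1\bigr)t\equiv 0\pmod p$.
\begin{itemize}[nosep]
\item If $a_0$ is nondegenerate, the coefficient $T_m'(a_0)-1$ is a unit, and only $t=0$ works.
\item If $a_0$ is degenerate, every $t$ works, so all $p$ lifts are fixed.
\end{itemize}
In the nondegenerate case, Hensel's lemma (Lemma~\ref{thm:hensel}) gives the same conclusion directly.

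\emph{Main obstacle.} The delicate step is producing an exact fixed point in $\Z_p$ when $a_0$ is degenerate, since Hensel's lemma does not apply there. The Teichm\"uller construction circumvents this. The points needing care are the descent from $\mathcal O_K$ to $\Z_p$ in the nonsplit case and the boundary case. Once an exact fixed point is available, the count modulo $p^2$ is the routine first-order expansion above.
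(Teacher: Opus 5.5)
Your proposal is correct and follows the paper's proof essentially step for step. Both take a Teichm\"uller lift of a source parameter, descend to $\Z_p$ in the nonsplit case via $\tau(\tilde\zeta)=\tilde\zeta^{-1}$, and finish with a first-order Taylor expansion about the exact fixed point; the only differences are cosmetic: you absorb the boundary points into the Teichm\"uller construction, whereas the paper simply takes $\alpha=\varepsilon$, and your Taylor argument also covers the nondegenerate case, where the paper cites Hensel's lemma.
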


\begin{proof}
If $a_0=\varepsilon\in\{\pm1\}$, take $\alpha=\varepsilon$; the point $-1$ is fixed modulo an odd prime only when $m$ is odd. Otherwise choose a source parameter $\zeta_0$ and its branch sign $\sigma$. Take its Teichm\"uller lift $\tilde\zeta$ in $\Z_p$ in the split case and in $\mathcal O_K$ in the nonsplit case. Preservation of order gives $\tilde\zeta^{m-\sigma}=1$.

Set $\alpha=F(\tilde\zeta)$. In the nonsplit case, $\tau(\tilde\zeta)$ and $\tilde\zeta^{-1}$ are Teichm\"uller lifts of the same residue, since $\zeta_0^p=\zeta_0^{-1}$. They are therefore equal, so $\alpha$ is fixed by $\tau$ and belongs to~$\Z_p$. This is immediate in the split case. Identity~\eqref{eq:param} now gives $T_m(\alpha)=\alpha$.

If $a_0$ is nondegenerate, uniqueness of the fixed lift modulo $p^2$ follows from Hensel's lemma. If it is degenerate, every lift is represented by $\alpha+pu$ with $u\in\Z_p$, and Taylor expansion gives
\[
 T_m(\alpha+pu)-(\alpha+pu)
 \equiv pu\bigl(T_m'(\alpha)-1\bigr)\equiv0\pmod{p^2}.
\]
Thus all $p$ lifts are fixed.
\end{proof}

\begin{corollary}[First-lift count]\label{thm:first-lift}
For every odd prime $p$ and $n\ge2$,
\[
 N_2=N_1+d(p-1).
\]
\end{corollary}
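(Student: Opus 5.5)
This follows by summing the local counts of Lemma~\ref{lem:local-first-lift} over the fixed residues modulo $p$. Every fixed point modulo $p^2$ reduces to a fixed point modulo $p$, because reduction commutes with $T_n$. So $N_2$ is the sum, over $a_0\in\Fix(T_n,\Fp)$, of the number of fixed lifts of $a_0$ modulo $p^2$. We apply Lemma~\ref{lem:local-first-lift} with $m=n$:
\begin{itemize}[nosep]
\item each nondegenerate fixed residue contributes exactly one fixed lift;
\item each degenerate fixed residue contributes all $p$ of its lifts.
\end{itemize}

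Exactly $d$ of the $N_1$ fixed residues are degenerate, so
\[
 N_2=(N_1-d)\cdot1+d\cdot p=N_1+d(p-1).
\]

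The lemma already handles both boundary and nonboundary residues uniformly, including $p=3$ at level $k=2$. Its degenerate case used only the existence of a $p$-adic fixed point $\alpha$ and a Taylor expansion modulo $p^2$, with no Hensel step. So the corollary needs no separate boundary analysis, and there is no real obstacle. The one point to state explicitly is that the lifts of distinct residues modulo $p$ are disjoint, so the local counts add.
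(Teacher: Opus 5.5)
Your proposal is correct and is essentially the paper's own proof: it sums the counts from Lemma~\ref{lem:local-first-lift}, one fixed lift for each of the $N_1-d$ nondegenerate residues and $p$ for each of the $d$ degenerate ones, giving $N_2=(N_1-d)+dp$. Your remarks that fixed classes modulo $p^2$ reduce to fixed residues, and that lifts of distinct residues are disjoint, are the implicit bookkeeping behind that sum.
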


\begin{proof}
Lemma~\ref{lem:local-first-lift} gives one fixed lift for each of the $N_1-d$ nondegenerate residues and $p$ for each of the $d$ degenerate residues. Hence $N_2=(N_1-d)+dp$.
\end{proof}

For example, the split and nonsplit cases both occur in the following counts:
\[
\begin{array}{c c c c c c}
\toprule
(p,n)&\text{source group of }a_0&a_0&N_1&d&N_2\\
\midrule
(7,22)&\Fp^\times&3&2&2&14\\
(5,14)&\mu_{p+1}&2&2&2&10\\
\bottomrule
\end{array}
\]
In the first row, the parameters of $a_0=3$ are $2,4\in\mathbb F_7^\times$, both of order~$3$. In the second, $H_2(Z)=Z^2+Z+1$ over $\mathbb F_5$ is irreducible, and its roots have order~$3$ in $\mu_6$. Proposition~\ref{prop:degen} and Corollary~\ref{thm:first-lift} give the displayed values of $d$ and~$N_2$.

\subsection{Nonboundary degenerate classes}\label{sec:all-level}
For the higher lifts, we vary a Teichm\"uller parameter by units congruent to $1$ modulo~$p$, called \emph{principal units}. In the nonsplit case these units must have norm one. The next two lemmas show that, in either case, the lifts of a nonboundary residue are parametrized by a cyclic group of order~$p^{k-1}$.

\begin{lemma}[Norm-one principal units]\label{lem:nonsplit-kernel}
Write $N=N_{K/\Q_p}$. For $k\ge1$, the norm induces a homomorphism
\[
 \overline N_k:\frac{1+p\mathcal O_K}{1+p^k\mathcal O_K}
 \longrightarrow\frac{1+p\Z_p}{1+p^k\Z_p},
 \qquad[\eta]\longmapsto[N(\eta)].
\]
Its kernel is cyclic of order $p^{k-1}$. Every class in this kernel has a representative of norm exactly~$1$, and reduction induces an isomorphism
\[
 \frac{(1+p\mathcal O_K)\cap\ker N}
      {(1+p^k\mathcal O_K)\cap\ker N}
 \ \cong\ \ker\overline N_k.
\]
\end{lemma}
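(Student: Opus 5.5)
The plan is to linearize the problem with the $p$-adic logarithm and exponential, which turns the norm on principal units into the trace on $p\mathcal O_K$. Since $p$ is odd and $K/\Q_p$ is unramified, $\nu_p(p)=1>1/(p-1)$. Hence $\log:1+p\mathcal O_K\to p\mathcal O_K$ and $\exp:p\mathcal O_K\to1+p\mathcal O_K$ converge and are mutually inverse isomorphisms of topological groups. Both also carry $1+p^k\mathcal O_K$ onto $p^k\mathcal O_K$ for every $k\ge1$, and the same holds over $\Z_p$. Both maps are given by power series with rational coefficients, so they commute with $\tau$. Writing $N(\eta)=\eta\,\tau(\eta)$, we get
\[
 \log N(\eta)=\log\eta+\tau(\log\eta)=\operatorname{Tr}_{K/\Q_p}(\log\eta).
\]
Thus $\overline N_k$ is well defined, since the norm is multiplicative and sends $1+p^k\mathcal O_K$ into $1+p^k\Z_p$. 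Under $\log$, it is identified with the map
\[
 \overline{\operatorname{Tr}}_k:\ p\mathcal O_K/p^k\mathcal O_K\longrightarrow p\Z_p/p^k\Z_p .
\]

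Next I compute the kernel of the trace map explicitly. Fix a nonsquare unit $c\in\Z_p^\times$ and let $\omega\in\mathcal O_K$ satisfy $\omega^2=c$. Because $p$ is odd, $K=\Q_p(\omega)$ is the unramified quadratic extension, $\mathcal O_K=\Z_p\oplus\Z_p\omega$, and $\tau(\omega)=-\omega$. Then $\operatorname{Tr}(p(x_0+x_1\omega))=2px_0$. Since $2$ is a unit, this lies in $p^k\Z_p$ exactly when $x_0\in p^{k-1}\Z_p$. So the kernel of $\overline{\operatorname{Tr}}_k$ is $(p\Z_p\omega+p^k\mathcal O_K)/p^k\mathcal O_K$. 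This group is generated by the class of $p\omega$, and it is isomorphic to $p\Z_p/p^k\Z_p\cong\Z/p^{k-1}\Z$. Transporting back by $\exp$, $\ker\overline N_k$ is cyclic of order $p^{k-1}$, generated by the class of $\exp(p\omega)$.

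For the representative statement, each kernel class is represented by some $\exp(px_1\omega)$ with $x_1\in\Z_p$. Its logarithm has trace exactly $0$, so $N(\exp(px_1\omega))=\exp(0)=1$. For the final isomorphism, consider the map from $(1+p\mathcal O_K)\cap\ker N$ to $\ker\overline N_k$ given by reduction modulo $1+p^k\mathcal O_K$. This map is a homomorphism, its image lies in $\ker\overline N_k$, and it is surjective by what was just shown. Its kernel is exactly $(1+p^k\mathcal O_K)\cap\ker N$, so the displayed quotient is isomorphic to $\ker\overline N_k$.

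The main point needing care is the first step: that $\log$ and $\exp$ are inverse isomorphisms that respect the filtrations by $1+p^k$ and $p^k$, and that $\log\circ N=\operatorname{Tr}\circ\log$. This is where $p$ odd and the absence of ramification are used, through the condition $\nu_p(p)>1/(p-1)$. I would cite the standard estimates, for instance \cite{Koblitz1984}, rather than reprove them. The remainder is the explicit linear algebra in the basis $\{1,\omega\}$.
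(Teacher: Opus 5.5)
Your proof is correct and follows the paper's argument. Like the paper, you use $\log$ and $\exp$ to turn $\overline N_k$ into the trace map on $p\mathcal O_K/p^k\mathcal O_K$, identify the kernel with $pV/p^kV$ for the rank-one summand $V=\ker\operatorname{Tr}$, and obtain norm-one representatives by exponentiating elements of $pV$; the only difference is that the paper splits $\mathcal O_K=\Z_p\oplus V$ abstractly via $x\mapsto\frac12\operatorname{Tr}(x)$, whereas you use the explicit basis $\{1,\omega\}$, so that $V=\Z_p\omega$.
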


\begin{proof}
Since $p$ is odd and $K/\Q_p$ is unramified, logarithm and exponential give inverse group isomorphisms between $1+p^j\mathcal O_K$ and $p^j\mathcal O_K$ for every $j\ge1$; see~\cite[Ch.~IV]{Koblitz1984}. Moreover,
\[
 \log N(\eta)=\operatorname{Tr}_{K/\Q_p}(\log\eta).
\]
Put $V=\ker(\operatorname{Tr}:\mathcal O_K\to\Z_p)$. Because $\operatorname{Tr}(1)=2$ is a unit, the decomposition
\[
 x=\tfrac12\operatorname{Tr}(x)
   +\bigl(x-\tfrac12\operatorname{Tr}(x)\bigr)
\]
gives $\mathcal O_K=\Z_p\oplus V$, with $V$ free of rank one over~$\Z_p$. Thus the kernel of trace on $p\mathcal O_K/p^k\mathcal O_K$ is
\[
 pV/p^kV\cong\Z/p^{k-1}\Z.
\]
Every class in this kernel is represented by an element of $pV$, whose exponential has norm exactly~$1$. Under logarithm, the exact norm-one subgroups at levels $1$ and $k$ correspond to $pV$ and $p^kV$, respectively. This proves all the assertions, including the case $k=1$.
\end{proof}

\begin{lemma}[Hensel inverse for the averaging map]\label{lem:hensel-average}
Let $a_0\in\Fp\setminus\{\pm1\}$, choose a source parameter $\zeta_0$, and let $\tilde\zeta$ be its Teichm\"uller lift. Set
\[
 (R,\mathcal U)=
 \begin{cases}
  (\Z_p,1+p\Z_p),&a_0\text{ split},\\
  (\mathcal O_K,(1+p\mathcal O_K)\cap\ker N),&a_0\text{ nonsplit},
 \end{cases}
\]
where $N=N_{K/\Q_p}$ in the nonsplit case. For $k\ge1$, put
\[
 \mathcal P_k=\mathcal U/\bigl(\mathcal U\cap(1+p^kR)\bigr).
\]
Then $\mathcal P_k$ is cyclic of order $p^{k-1}$, and the map
\[
 [\eta]\longmapsto F(\tilde\zeta\eta)\pmod{p^k}
\]
is a well-defined bijection from $\mathcal P_k$ onto the lifts of $a_0$ modulo~$p^k$.
\end{lemma}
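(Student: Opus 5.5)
The plan has three parts: identify $\mathcal P_k$, show the map is well defined and lands in the lifts of $a_0$, and then prove bijectivity by a Hensel argument applied to $H_b(Z)=Z^2-2bZ+1$ over $R$ at $Z=\tilde\zeta$.

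\emph{Cyclicity.} In the split case, $\mathcal P_k=(1+p\Z_p)/(1+p^k\Z_p)$. Since $p$ is odd, the logarithm identifies this with $p\Z_p/p^k\Z_p\cong\Z/p^{k-1}\Z$. In the nonsplit case, $\mathcal U\cap(1+p^kR)=(1+p^k\mathcal O_K)\cap\ker N$. Lemma~\ref{lem:nonsplit-kernel} then identifies $\mathcal P_k$ with $\ker\overline N_k$, which is cyclic of order $p^{k-1}$.

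\emph{Well-definedness.} For $\eta\in\mathcal U$, the element $\tilde\zeta\eta$ is a unit of $R$, so $F(\tilde\zeta\eta)\in R$ because $2$ is invertible. In the split case this is already in $\Z_p$. In the nonsplit case, $\tau(\tilde\zeta)=\tilde\zeta^{-1}$, as in the proof of Lemma~\ref{lem:local-first-lift}. Also $\tau(\eta)=N(\eta)\eta^{-1}=\eta^{-1}$, so $\tau$ maps $\tilde\zeta\eta$ to its inverse. Hence $F(\tilde\zeta\eta)$ is $\tau$-invariant and lies in $\Z_p$. Its reduction is $F(\zeta_0)=a_0$, so it is a lift of $a_0$. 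If $\eta'=\eta(1+p^kx)$, then $\tilde\zeta\eta'\equiv\tilde\zeta\eta\pmod{p^k}$. The inverses agree modulo $p^k$ as well, so the value of the map modulo $p^k$ depends only on $[\eta]$.

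\emph{Bijectivity.} Take any lift $b\in\Z_p$ of $a_0$ and consider $H_b(Z)=Z^2-2bZ+1$ over $R$. Its reduction is $H_{a_0}$, which has the simple root $\zeta_0$, since the roots $\zeta_0\neq\zeta_0^{-1}$ are distinct when $a_0\ne\pm1$. By Lemma~\ref{thm:hensel}, $H_b$ has a unique root $\xi\in R$ reducing to $\zeta_0$, and then $F(\xi)=b$. Put $\eta=\tilde\zeta^{-1}\xi$, a principal unit.

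\begin{itemize}
\item In the nonsplit case, the other root $\xi^{-1}$ reduces to $\zeta_0^{-1}=\zeta_0^p$. Now $\tau(\xi)$ is also a root of $H_b$, since $b\in\Z_p$, and it reduces to $\zeta_0^p$. By uniqueness, $\tau(\xi)=\xi^{-1}$, so $N(\xi)=1$. Together with $N(\tilde\zeta)=1$, this gives $N(\eta)=1$ and $\eta\in\mathcal U$.
\item Hence every $b\in\Z_p$ lifting $a_0$ is exactly $F(\tilde\zeta\eta)$ for some $\eta\in\mathcal U$. This gives surjectivity onto the lifts modulo $p^k$.
\end{itemize}

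For injectivity, suppose $F(\tilde\zeta\eta)\equiv F(\tilde\zeta\eta')\pmod{p^k}$. Then both $\tilde\zeta\eta$ and $\tilde\zeta\eta'$ are roots modulo $p^k$ of $H_b$, with $b=F(\tilde\zeta\eta)$, and both reduce to $\zeta_0$. By the uniqueness clause of Lemma~\ref{thm:hensel} modulo $\mathfrak m^k=p^kR$, they agree modulo $p^k$. Therefore $\eta'\eta^{-1}\in\mathcal U\cap(1+p^kR)$, that is, $[\eta]=[\eta']$.

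\emph{Main obstacle and fallback.} The main obstacle is the Galois bookkeeping in the nonsplit case. We need the Hensel root to be norm one, and we need the kernel description of Lemma~\ref{lem:nonsplit-kernel} to match the quotient used here. As a check, we can also count: both sides have $p^{k-1}$ elements, so injectivity alone already suffices. If the Galois argument proves delicate, we will fall back on this counting argument.
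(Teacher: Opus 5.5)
Your proposal is correct and follows the paper's proof essentially step for step. You use the logarithm and Lemma~\ref{lem:nonsplit-kernel} for cyclicity, the $\tau$-invariance of $\tilde\zeta\eta$ for well-definedness, and a Hensel root of $Z^2-2bZ+1$ with the uniqueness argument $\tau(\xi)=\xi^{-1}$ for surjectivity. The only variation is in injectivity: you invoke Hensel uniqueness modulo $p^kR$, while the paper uses the explicit factorization $F(z_1)-F(z_2)=\tfrac{z_1-z_2}{2}\bigl(1-\tfrac1{z_1z_2}\bigr)$ with a unit second factor; these rest on the same underlying fact.
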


\begin{proof}
In the split case, logarithm identifies $\mathcal P_k$ with $p\Z_p/p^k\Z_p$. In the nonsplit case, its cyclicity and order follow from Lemma~\ref{lem:nonsplit-kernel}.

For $\eta\in\mathcal U$, put $z=\tilde\zeta\eta$. In the split case, $F(z)\in\Z_p$ is immediate. In the nonsplit case, the Teichm\"uller construction gives $\tau(\tilde\zeta)=\tilde\zeta^{-1}$, so $N(\tilde\zeta)=1$. Since $N(\eta)=1$, we have $\tau(z)=z^{-1}$ and again $F(z)\in\Z_p$. Congruent units modulo $p^kR$ have congruent images under $F$, and $p^kR\cap\Z_p=p^k\Z_p$, proving that the map is well defined.

Given a lift of $a_0$ modulo $p^k$, choose a representative $\hat x\in\Z_p$. The polynomial $Z^2-2\hat xZ+1$ has the simple root $\zeta_0$ modulo $p$, with derivative $\zeta_0-\zeta_0^{-1}\ne0$. Hensel's lemma supplies a root $z\in R$ reducing to~$\zeta_0$, and $F(z)=\hat x$. In the nonsplit case, $\tau(z)$ and $z^{-1}$ are roots reducing to $\zeta_0^{-1}$; uniqueness in Hensel's lemma gives $\tau(z)=z^{-1}$. Thus $z/\tilde\zeta\in\mathcal U$ in both cases, proving surjectivity.

Finally, for parameters $z_1,z_2$ reducing to the chosen $\zeta_0$,
\[
 F(z_1)-F(z_2)
 =\frac{z_1-z_2}{2}\left(1-\frac1{z_1z_2}\right).
\]
The factor in parentheses reduces to $1-\zeta_0^{-2}\ne0$ and is a unit. Therefore $F(z_1)\equiv F(z_2)\pmod{p^k}$ if and only if $z_1\equiv z_2\pmod{p^kR}$. This proves injectivity. Choosing one of the two distinct parameters modulo $p$ also removes the global ambiguity under inversion.
\end{proof}

\begin{lemma}[Local count above a nonboundary degenerate class]\label{lem:local-nonboundary}
Let $a_0\in\Fp\setminus\{\pm1\}$ be a degenerate fixed point of $T_n$, and set $s=\nu_p(n^2-1)$. For every $k\ge1$, the number of fixed lifts of $a_0$ modulo $p^k$ is
\[
 p^{\min(k-1,s)}.
\]
\end{lemma}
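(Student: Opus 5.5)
The plan is to transport the problem through the bijection of Lemma~\ref{lem:hensel-average}. Choose a source parameter $\zeta_0$ of $a_0$ with branch sign $\sigma$, so that $\zeta_0^{n-\sigma}=1$. Let $\tilde\zeta$ be its Teichm\"uller lift, which satisfies $\tilde\zeta^{n-\sigma}=1$ because Teichm\"uller lifts preserve order. Every lift of $a_0$ modulo $p^k$ is then uniquely $F(\tilde\zeta\eta)$ with $[\eta]\in\mathcal P_k$. By~\eqref{eq:param},
\[
 T_n(F(\tilde\zeta\eta))=F(\tilde\zeta^n\eta^n)=F(\tilde\zeta^{\sigma}\eta^n).
\]
So the fixed-point condition becomes $F(\tilde\zeta^\sigma\eta^n)\equiv F(\tilde\zeta\eta)\pmod{p^k}$.

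I first translate this into a power equation. When $\sigma=1$, the parameter $\tilde\zeta\eta^n$ reduces to $\zeta_0$, and the injectivity part of Lemma~\ref{lem:hensel-average} gives fixedness iff $\eta^n\equiv\eta$, i.e.\ $\eta^{n-1}\in 1+p^kR$. When $\sigma=-1$, I use $F(w)=F(w^{-1})$ and write $F(\tilde\zeta^{-1}\eta^n)=F(\tilde\zeta\eta^{-n})$. Here $\tilde\zeta\eta^{-n}$ reduces to $\zeta_0$, and $\eta^{-n}\in\mathcal U$ (in the nonsplit case it still has norm one). Thus fixedness is equivalent to $\eta^{-n}\equiv\eta$, i.e.\ $\eta^{n+1}\in1+p^kR$. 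In both cases the condition reads $\eta^{n-\sigma}=1$ in $\mathcal P_k$. I should check that the reduction $\tilde\zeta^{-1}\eta^n\mapsto\tilde\zeta\eta^{-n}$ is legitimate modulo $p^k$. It is, because $F$ is inversion-invariant exactly, not just modulo $p^k$.

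The count then follows from group theory. $\mathcal P_k$ is cyclic of order $p^{k-1}$, and in a cyclic group of order $M$ the equation $z^j=1$ has $\gcd(j,M)$ solutions. Degeneracy gives $p\mid n-\sigma$ and $p\nmid n+\sigma$, so $\nu_p(n-\sigma)=s$, as recorded after Proposition~\ref{prop:degen}. Hence the number of solutions is $\gcd(n-\sigma,p^{k-1})=p^{\min(k-1,s)}$. Since the map $[\eta]\mapsto F(\tilde\zeta\eta)$ is a bijection onto the lifts, this equals the number of fixed lifts.

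The main obstacle is the sign $\sigma=-1$. There I must make sure the inversion trick stays within the parametrization, which uses the same chosen $\zeta_0$, so that injectivity can be applied. In the nonsplit case this also requires $\mathcal U$ to be closed under inversion and powers. Both follow because $\mathcal U$ is a group, so the difficulty is mostly bookkeeping. A sanity check is that $k=2$ gives $p$ fixed lifts, matching Lemma~\ref{lem:local-first-lift}.
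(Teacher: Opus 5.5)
Your proof is correct and follows the paper's argument. You parametrize the lifts by the cyclic group $\mathcal P_k$ of Lemma~\ref{lem:hensel-average}, reduce fixedness to $[\eta]^{n-\sigma}=1$, and count solutions as $\gcd(n-\sigma,p^{k-1})=p^{\min(k-1,s)}$. The only difference is in how you reach the power equation: the paper uses the factorization $T_n(x)-x=(z^{n-1}-1)(z^{n+1}-1)/(2z^n)$ with $z^{n+\sigma}-1$ a unit, whereas you use $T_n(F(z))=F(z^n)$, the inversion invariance of $F$, and injectivity of the parametrization.
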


\begin{proof}
Choose a source parameter $\zeta_0$ with branch sign $\sigma$, and use $R$, $\mathcal U$, and $\mathcal P_k$ from Lemma~\ref{lem:hensel-average}. Since the Teichm\"uller lift preserves order, $\tilde\zeta^{n-\sigma}=1$. The lemma parametrizes the lifts by $[\eta]\in\mathcal P_k$. For a representative $\eta\in\mathcal U$, put $z=\tilde\zeta\eta$ and $x=F(z)\in\Z_p$. The factorization from Lemma~\ref{lem:fixed-factor} gives
\[
 T_n(x)-x
 =\frac{(z^{n-1}-1)(z^{n+1}-1)}{2z^n}.
\]
The factor $z^{n+\sigma}-1$ is a unit: otherwise both power equations would hold at $\zeta_0$ modulo $p$, contrary to the uniqueness of the branch sign. Since $p^kR\cap\Z_p=p^k\Z_p$, we obtain
\[
 \begin{aligned}
 T_n(x)\equiv x\pmod{p^k}
 &\quad\Longleftrightarrow\quad z^{n-\sigma}\equiv1\pmod{p^kR}\\
 &\quad\Longleftrightarrow\quad[\eta]^{n-\sigma}=1
       \quad\text{in }\mathcal P_k.
 \end{aligned}
\]
The unit factor is essential here, since a vanishing product modulo $p^k$ alone need not have a vanishing factor. The cyclic group $\mathcal P_k$ has order $p^{k-1}$, so the last equation has
\[
 \gcd(n-\sigma,p^{k-1})
 =p^{\min(k-1,\nu_p(n-\sigma))}
 =p^{\min(k-1,s)}
\]
solutions.
\end{proof}

\subsection{\texorpdfstring{Boundary classes for $p\ge5$}{Boundary classes for p at least 5}}
At a boundary parameter, the derivative of $F$ vanishes, so the preceding inverse argument does not apply. For a fixed boundary point $\varepsilon\in\{\pm1\}$, define the polynomial
\[
 Q_\varepsilon(x)=\frac{T_n(x)-x}{x-\varepsilon}\in\Z[x].
\]
We shall show that its valuation is constant on $\varepsilon+p\Z_p$ when $p\ge5$. The parameter calculation may take place in a ramified quadratic extension, so we first record the precise valuation range needed for logarithm and exponential.

\begin{lemma}[Logarithm and geometric-sum valuations]\label{lem:Am-valuation}
Let $E/\Q_p$ be a finite extension. If $\xi,\mu\in E$ satisfy
\[
 \nu_p(\xi)>\frac1{p-1},\qquad
 \nu_p(\mu)>\frac1{p-1},
\]
then
\[
 \nu_p(\log(1+\xi))=\nu_p(\xi),\qquad
 \nu_p(\exp(\mu)-1)=\nu_p(\mu).
\]
Consequently, for
\[
 A_m(z)=1+z+\cdots+z^{m-1}\qquad(m\ge1),
\]
one has $\nu_p(A_m(z))=\nu_p(m)$ whenever $z\in E$ satisfies $\nu_p(z-1)>1/(p-1)$.
\end{lemma}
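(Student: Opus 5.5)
The plan is to prove the two valuation identities directly from the power series, and then reduce the statement about $A_m$ to them through the identity $A_m(z)(z-1)=z^m-1$.

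\emph{Logarithm.} Write $v=\nu_p(\xi)>1/(p-1)$. We compare each term $\xi^j/j$ with $j\ge2$ against the leading term $\xi$. We have
\[
 \nu_p(\xi^j/j)-v=(j-1)v-\nu_p(j).
\]
Since $\nu_p(j)\le(j-1)/(p-1)$ (from $j\ge p^{\nu_p(j)}$ and $p^r-1\ge r(p-1)$), this difference is at least $(j-1)\bigl(v-1/(p-1)\bigr)>0$. So every higher term has strictly larger valuation than $\xi$. The lower bound $(j-1)(v-\tfrac1{p-1})$ tends to infinity, so the series converges. The ultrametric inequality then gives $\nu_p(\log(1+\xi))=v$.

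\emph{Exponential.} Write $v=\nu_p(\mu)$. For $j\ge2$ we use Legendre's formula,
\[
 \nu_p(j!)=\frac{j-s_p(j)}{p-1}\le\frac{j-1}{p-1},
\]
where $s_p(j)$ is the base-$p$ digit sum. This gives
\[
 \nu_p(\mu^j/j!)-v\ge(j-1)\bigl(v-1/(p-1)\bigr)>0.
\]
As before, the series converges and $\exp(\mu)-1$ has the valuation of its leading term $\mu$. In a ramified extension $E$ the valuation is fractional, but nothing in either estimate uses integrality of $\nu_p$.

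\emph{Consequence for $A_m$.} Put $\xi=z-1$, so $\nu_p(\xi)>1/(p-1)$, and set $\lambda=\log z$. By the logarithm part, $\nu_p(\lambda)=\nu_p(z-1)$. On this disk $\exp$ and $\log$ are inverse, so $z^m=\exp(m\lambda)$. Since $\nu_p(m\lambda)\ge\nu_p(\lambda)>1/(p-1)$, the exponential part gives
\[
 \nu_p(z^m-1)=\nu_p(m)+\nu_p(z-1).
\]
The identity $A_m(z)(z-1)=z^m-1$ then yields $\nu_p(A_m(z))=\nu_p(m)$ when $z\ne1$. When $z=1$ we have $A_m(1)=m$ directly.

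The main obstacle is justifying $z^m=\exp(m\log z)$ on the disk $\nu_p(z-1)>1/(p-1)$. I would cite the standard fact that $\exp$ and $\log$ are mutually inverse isomorphisms between this disk and $\{\nu_p>1/(p-1)\}$, for instance from \cite[Ch.~IV]{Koblitz1984}.
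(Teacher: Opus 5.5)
Your proposal is correct and follows the paper's proof essentially step by step: you compare each higher term of the $\log$ and $\exp$ series with the linear term using $\nu_p(j)\le\nu_p(j!)\le(j-1)/(p-1)$, then write $A_m(z)=(\exp(m\log z)-1)/(z-1)$, cite the inverse property from Koblitz, and treat $z=1$ separately. The only cosmetic difference is that you bound $\nu_p(j)$ directly via $j\ge p^{\nu_p(j)}$ rather than through $\nu_p(j!)$.
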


\begin{proof}
The logarithm and exponential are inverse on the stated range; see~\cite[Ch.~IV, Sec.~1]{Koblitz1984}. To verify the valuation equalities, use Legendre's bound
\[
 \nu_p(r)\le\nu_p(r!)\le\frac{r-1}{p-1}\qquad(r\ge2).
\]
For nonzero $\xi$ and $\mu$ satisfying the hypotheses, this gives
\[
 \begin{aligned}
 \nu_p(\xi^r/r)-\nu_p(\xi)
   &=(r-1)\nu_p(\xi)-\nu_p(r)>0,\\
 \nu_p(\mu^r/r!)-\nu_p(\mu)
   &=(r-1)\nu_p(\mu)-\nu_p(r!)>0.
 \end{aligned}
\]
Thus the linear term has strictly smaller valuation than every later term in each series, proving both equalities. The zero cases follow from the convention $\nu_p(0)=+\infty$.

If $z=1$, then $A_m(z)=m$. Otherwise, logarithm and exponential give
\[
 A_m(z)=\frac{\exp(m\log z)-1}{z-1}.
\]
Since $\nu_p(m\log z)\ge\nu_p(\log z)>1/(p-1)$, the two valuation equalities yield $\nu_p(A_m(z))=\nu_p(m)$.
\end{proof}

In particular, for odd $p$ the lemma gives the lifting-the-exponent identity
\begin{equation}\label{eq:LTE}
 \nu_p(v^j-1)=\nu_p(v-1)+\nu_p(j)
 \qquad(v\in1+p\Z_p,\ v\ne1,\ j\ge1),
\end{equation}
by factoring $v^j-1=(v-1)A_j(v)$. This form will also be used for iterates.

\begin{remark}[Why the strict inequality matters]\label{rem:threshold}
Over $\Q_p$, positive valuations are integers, so they exceed $1/(p-1)$ for odd~$p$. In a ramified extension, however, a positive valuation may be fractional. The boundary calculation below gives only $\nu_p(z-1)\ge1/2$. This suffices for Lemma~\ref{lem:Am-valuation} when $p\ge5$, but reaches its threshold when $p=3$.
\end{remark}

\begin{proposition}[Valuation on a boundary residue disk]\label{prop:Qeps-val}
Let $p\ge5$, let $\varepsilon\in\{\pm1\}$ be a degenerate fixed point of $T_n$ modulo $p$, and put $s=\nu_p(n^2-1)$. Then
\[
 \nu_p(Q_\varepsilon(x))=s
 \qquad(x\in\varepsilon+p\Z_p).
\]
\end{proposition}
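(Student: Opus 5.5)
Fix $x\in\varepsilon+p\Z_p$ and write $x=F(z)$ with $z$ a root of $Z^2-2xZ+1$. Its discriminant is $4(x^2-1)$, and $\nu_p(x-\varepsilon)\ge1$, so $z$ lies in $E=\Q_p(\sqrt{x^2-1})$, which has degree at most $2$ and may be ramified. The same discriminant gives
\[
 \nu_p(z-\varepsilon)=\tfrac12\nu_p(x^2-1)=\tfrac12\nu_p(x-\varepsilon)\ge\tfrac12,
\]
since $(z-\varepsilon)^2=2z(x-\varepsilon)\cdot\varepsilon$ up to a unit. For $x=\varepsilon$ we can simply evaluate: $Q_\varepsilon(\varepsilon)=T_n'(\varepsilon)-1=n^2-1$ by Proposition~\ref{prop:degen}(b), which has valuation $s$. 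So assume $x\ne\varepsilon$, hence $z\ne\pm1$.

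Use the factorization in Lemma~\ref{lem:fixed-factor}, together with $x-\varepsilon=\varepsilon(z-\varepsilon)^2/(2z)$, which follows from $z+z^{-1}-2\varepsilon=\varepsilon z^{-1}(z-\varepsilon)^2$. This gives
\[
 Q_\varepsilon(x)=\frac{(z^{n-1}-1)(z^{n+1}-1)}{\varepsilon\,z^{n-1}(z-\varepsilon)^2}.
\]
Put $w=\varepsilon z$, so $\nu_p(w-1)\ge1/2>1/(p-1)$ because $p\ge5$. Since $\varepsilon$ is fixed, $\varepsilon^{n-1}=1$, and therefore $z^{n\mp1}-1=\varepsilon^{n\mp1}w^{n\mp1}-1=w^{n\mp1}-1$; when $\varepsilon=-1$ the integer $n$ is odd, so both exponents are even. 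Writing $w^{m}-1=(w-1)A_m(w)$ gives
\[
 Q_\varepsilon(x)=\pm\frac{A_{n-1}(w)\,A_{n+1}(w)}{z^{n-1}},
\]
because $(w-1)^2=(z-\varepsilon)^2$.

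Lemma~\ref{lem:Am-valuation} applies here precisely because the threshold $1/(p-1)<1/2$ holds for $p\ge5$. It yields $\nu_p(A_{n-1}(w))=\nu_p(n-1)$ and $\nu_p(A_{n+1}(w))=\nu_p(n+1)$. Since $z$ is a unit, $\nu_p(Q_\varepsilon(x))=\nu_p(n-1)+\nu_p(n+1)=s$. As $p$ is odd, at most one of $n\pm1$ is divisible by $p$, but the additive form already gives $s$ directly.

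The main obstacle is the careful bookkeeping of the valuation of $z-\varepsilon$ in the possibly ramified extension. Two points need checking: that the bound $\ge1/2$ holds uniformly, and that the nonzero hypothesis in the logarithm argument is harmless. The latter is fine because $A_m(1)=m$ is handled in the lemma. The sign bookkeeping with $\varepsilon=-1$ also needs care, and there I would double-check the parity identity via $T_n(-x)=-T_n(x)$ for odd $n$.
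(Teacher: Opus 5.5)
Your proof is correct and follows the paper's argument. Your $w=\varepsilon z$ is exactly the paper's parameter, a root of $Z^2-2\varepsilon xZ+1$, and you reach the same factorization $Q_\varepsilon(x)=\pm z^{1-n}A_{n-1}(w)A_{n+1}(w)$, apply Lemma~\ref{lem:Am-valuation} via $1/2>1/(p-1)$, and treat $x=\varepsilon$ through $T_n'(\varepsilon)-1=n^2-1$ instead of the Laurent identity at the parameter $1$. One small slip: the correct identity is $z+z^{-1}-2\varepsilon=z^{-1}(z-\varepsilon)^2$, so $x-\varepsilon=(z-\varepsilon)^2/(2z)$ has no factor $\varepsilon$, but this changes only an overall sign, which you already absorb into $\pm$.
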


\begin{proof}
Fix $x\in\varepsilon+p\Z_p$, put $u=\varepsilon x\in1+p\Z_p$, and choose a root $z$ of $Z^2-2uZ+1$ in a finite extension of~$\Q_p$. The roots are integral and have product~$1$, so they are units. Moreover,
\[
 (z-1)^2=2z(u-1),\qquad \nu_p(z-1)\ge\tfrac12.
\]
Since $T_n(\varepsilon)=\varepsilon$, the parity identity gives $T_n(\varepsilon y)=\varepsilon T_n(y)$. Using~\eqref{eq:param}, we obtain
\[
 T_n(x)-x=\varepsilon\frac{(z^{n-1}-1)(z^{n+1}-1)}{2z^n},
 \qquad
 x-\varepsilon=\varepsilon\frac{(z-1)^2}{2z}.
\]
Cancelling $(z-1)^2$ as a Laurent polynomial factor gives
\begin{equation}\label{eq:Qeps-factor}
 Q_\varepsilon(x)=z^{1-n}A_{n-1}(z)A_{n+1}(z),
 \qquad x=\varepsilon F(z).
\end{equation}
This is an identity after substitution in the Laurent polynomial ring, so it also holds at $z=1$, where both sides equal $n^2-1$.

For $z\ne1$, we have $\nu_p(z-1)\ge1/2>1/(p-1)$. Applying Lemma~\ref{lem:Am-valuation} to both geometric sums yields
\[
 \nu_p(Q_\varepsilon(x))
 =\nu_p(n-1)+\nu_p(n+1)=s.
\]
The valuation on the extension restricts to the prescribed valuation on $\Q_p$, so this computes the valuation of $Q_\varepsilon(x)$ in~$\Q_p$ as required.
\end{proof}

\begin{lemma}[Local count above a boundary degenerate class]\label{lem:local-boundary}
Let $p\ge5$, let $\varepsilon\in\{\pm1\}$ be a degenerate fixed point of $T_n$ modulo $p$, and put $s=\nu_p(n^2-1)$. For every $k\ge1$, the number of fixed lifts of $\varepsilon$ modulo $p^k$ is
\[
 p^{\min(k-1,s)}.
\]
\end{lemma}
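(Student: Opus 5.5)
The plan is to combine the factorization $T_n(x)-x=(x-\varepsilon)Q_\varepsilon(x)$ with Proposition~\ref{prop:Qeps-val}. Every lift of $\varepsilon$ modulo $p^k$ is represented by some $x\in\varepsilon+p\Z_p$, and the fixed-point condition modulo $p^k$ depends only on the class of $x$. So we need to count classes $x\bmod p^k$ in the residue disk with $\nu_p(T_n(x)-x)\ge k$.

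By Proposition~\ref{prop:Qeps-val}, $\nu_p(Q_\varepsilon(x))=s$ for every $x$ in the disk. Hence
\[
 \nu_p(T_n(x)-x)=\nu_p(x-\varepsilon)+s,
\]
with the convention that both sides are $+\infty$ when $x=\varepsilon$. The condition therefore becomes $\nu_p(x-\varepsilon)\ge k-s$, together with the standing requirement $\nu_p(x-\varepsilon)\ge1$ that places $x$ in the disk. Writing $x=\varepsilon+y$, the fixed lifts correspond to classes $y\bmod p^k$ with
\[
 \nu_p(y)\ge\max(1,k-s).
\]
This condition is well defined modulo $p^k$, because $\max(1,k-s)\le k$ for $k\ge1$.

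The number of such classes is $p^{k-\max(1,k-s)}=p^{\min(k-1,s)}$, which is the claimed count. As a check, when $k=1$ this gives one lift (the residue itself), and when $k=2$ it gives $p$ lifts since $s\ge1$; this agrees with Lemma~\ref{lem:local-first-lift}.

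There is no serious obstacle, because all the analytic work sits in Proposition~\ref{prop:Qeps-val}. The one point that needs care is that the valuation identity must hold for every $p$-adic representative $x$, not only for those reducing to $\varepsilon$ modulo $p^2$. This is exactly what makes the constancy of $\nu_p(Q_\varepsilon)$ on the whole disk, which is available for $p\ge5$, essential.
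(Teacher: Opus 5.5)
Your proposal is correct and follows the paper's proof essentially line for line. The factorization through $Q_\varepsilon$ and Proposition~\ref{prop:Qeps-val} give $\nu_p(T_n(x)-x)=\nu_p(x-\varepsilon)+s$ on the disk. The fixed lifts are therefore exactly the classes $x\equiv\varepsilon\pmod{p^{\max(1,k-s)}}$, and there are $p^{\min(k-1,s)}$ of them.
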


\begin{proof}
The factorization $T_n(x)-x=(x-\varepsilon)Q_\varepsilon(x)$ and Proposition~\ref{prop:Qeps-val} give
\[
 \nu_p(T_n(x)-x)=\nu_p(x-\varepsilon)+s
 \qquad(x\in\varepsilon+p\Z_p),
\]
including $x=\varepsilon$ under the convention $\nu_p(0)=+\infty$. Thus a lift is fixed modulo $p^k$ if and only if
\[
 x\equiv\varepsilon\pmod{p^{\max(1,k-s)}}.
\]
There are $p^{k-\max(1,k-s)}=p^{\min(k-1,s)}$ such classes modulo~$p^k$.
\end{proof}

\subsection{Boundary classes for \texorpdfstring{$p=3$}{p=3}}\label{subsec:ternary}
At $p=3$, the bound $\nu_3(z-1)\ge1/2$ need not lie in the range of Lemma~\ref{lem:Am-valuation}. Remark~\ref{rem:universal-fixed} identifies the additional fixed point responsible for the different count: if $3\nmid n$ and $\varepsilon$ is a fixed boundary point, then $-\varepsilon/2$ is also fixed and lies in the same residue disk. Factoring out both rational roots allows the remaining factor to be evaluated within the required range.

\begin{proposition}[Boundary factorization and valuation at $p=3$]\label{prop:ternary-boundary-valuation}
Let $n\ge2$ with $3\nmid n$, let $\varepsilon\in\{\pm1\}$ satisfy $T_n(\varepsilon)=\varepsilon$, and put $s=\nu_3(n^2-1)$. Then
\[
 R_{n,\varepsilon}(x)
 :=\frac{T_n(x)-x}{(x-\varepsilon)(2x+\varepsilon)}
 \in\Z[x],
\]
and
\[
 \nu_3(R_{n,\varepsilon}(x))=s-1
 \qquad(x\in\varepsilon+3\Z_3).
\]
\end{proposition}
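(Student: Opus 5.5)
The plan is to treat the two assertions separately: integrality by a content argument, and the valuation by the same parameter substitution used in Proposition~\ref{prop:Qeps-val}, after first removing the cyclotomic factor that corresponds to the second rational root $-\varepsilon/2$.

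\emph{Integrality.} By hypothesis $T_n(\varepsilon)=\varepsilon$. Remark~\ref{rem:universal-fixed} shows that $T_n(-\varepsilon/2)=-\varepsilon/2$. For $\varepsilon=1$ this is the entry $h=3$, using $3\nmid n$. For $\varepsilon=-1$ we have $n$ odd, and the parity identity gives $T_n(1/2)=-T_n(-1/2)=1/2$. The two roots $\varepsilon$ and $-\varepsilon/2$ are distinct. Hence the primitive polynomial $(x-\varepsilon)(2x+\varepsilon)\in\Z[x]$ divides $T_n(x)-x$ in $\Q[x]$. Gauss's lemma then puts the quotient $R_{n,\varepsilon}$ in $\Z[x]$.

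\emph{Parametrized factorization.} Fix $x\in\varepsilon+3\Z_3$ and put $u=\varepsilon x\in 1+3\Z_3$. Let $z$ be a root of $Z^2-2uZ+1$ in a finite extension. As in Proposition~\ref{prop:Qeps-val}, $z$ is a unit with $\nu_3(z-1)\ge 1/2$, and
\[
 T_n(x)-x=\varepsilon\frac{(z^{n-1}-1)(z^{n+1}-1)}{2z^n},\qquad x-\varepsilon=\varepsilon\frac{(z-1)^2}{2z}.
\]
In addition, $2x+\varepsilon=\varepsilon(2u+1)=\varepsilon(z^2+z+1)/z$. Since $3\nmid n$, exactly one of $n\mp1$ is divisible by $3$; call it $m$, and call the other $m'$, so $3\nmid m'$. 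Using $A_m(z)=A_3(z)\,A_{m/3}(z^3)$, the cancellation can be done in the Laurent polynomial ring:
\[
 R_{n,\varepsilon}(x)=\varepsilon\, z^{2-n}\,A_{m'}(z)\,A_{m/3}(z^3).
\]
Because this is a Laurent identity, it stays valid at the special parameters $z=1$ and $z=\omega$ (a primitive cube root of unity), which correspond to $x=\varepsilon$ and $x=-\varepsilon/2$. This is exactly what makes the second rational root harmless.

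\emph{Valuations.} The factor $z^{2-n}$ is a unit. Since $A_{m'}(z)\equiv m'$ modulo $(z-1)$ and $3\nmid m'$, the factor $A_{m'}(z)$ is also a unit. For the remaining factor, write $z^3-1=(z-1)^3+3z(z-1)^2+3(z-1)$. This gives $\nu_3(z^3-1)\ge 3/2>1/2=1/(p-1)$. Lemma~\ref{lem:Am-valuation}, applied to $z^3$, then yields $\nu_3(A_{m/3}(z^3))=\nu_3(m)-1$; this also covers $z^3=1$ and the trivial case $m=3$. Since $3\nmid m'$, we have $\nu_3(m)=s$, and so $\nu_3(R_{n,\varepsilon}(x))=s-1$.

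The main obstacle is precisely the threshold issue of Remark~\ref{rem:threshold}. Applying Lemma~\ref{lem:Am-valuation} directly to $A_m(z)$ fails, because $\nu_3(z-1)$ may equal $1/2$; this happens for $z=\omega$, for instance. The key step is therefore splitting off $A_3(z)=z^2+z+1$, which is exactly the denominator contributed by $2x+\varepsilon$, and passing to $z^3$, which lands safely inside the convergence range.
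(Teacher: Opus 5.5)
Your proposal is correct and follows the paper's proof essentially step for step: Gauss's lemma for integrality, the Laurent factorization $R_{n,\varepsilon}(x)=\varepsilon z^{2-n}A_{m'}(z)A_{m/3}(z^3)$ via $A_m(z)=A_3(z)A_{m/3}(z^3)$, and Lemma~\ref{lem:Am-valuation} applied at $z^3$. The paper obtains $\nu_3(z^3-1)\ge 3/2$ as $\nu_3(z-1)+\nu_3(A_3(z))$, which is the same computation as yours, but your expansion contains a slip: it should read $z^3-1=(z-1)^3+3(z-1)^2+3(z-1)$, equivalently $(z-1)^3+3z(z-1)$, rather than using $3z(z-1)^2$, and the bound $\nu_3(z^3-1)\ge 3/2$ is unaffected.
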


\begin{proof}
By Remark~\ref{rem:universal-fixed}, both $\varepsilon$ and $-\varepsilon/2$ are rational fixed points. The primitive polynomial
\[
 (x-\varepsilon)(2x+\varepsilon)=2x^2-\varepsilon x-1
\]
therefore divides $T_n(x)-x$ in $\Q[x]$, and Gauss's lemma gives $R_{n,\varepsilon}\in\Z[x]$.

Fix $x\in\varepsilon+3\Z_3$, put $u=\varepsilon x$, and choose a root $z$ of $Z^2-2uZ+1$ in a finite extension of~$\Q_3$. The Laurent polynomial identity~\eqref{eq:Qeps-factor} holds independently of the restriction $p\ge5$. Also,
\[
 2x+\varepsilon=\varepsilon(2u+1)
   =\varepsilon z^{-1}A_3(z).
\]
Exactly one member $m$ of $\{n-1,n+1\}$ is divisible by~$3$. Write $m'$ for the other, so $s=\nu_3(m)$ and $3\nmid m'$. The identity
\[
 A_m(z)=A_3(z)A_{m/3}(z^3)
\]
allows cancellation of $A_3(z)$ in the Laurent polynomial ring, giving
\begin{equation}\label{eq:ternary-R-factor}
 R_{n,\varepsilon}(x)
 =\varepsilon z^{2-n}A_{m'}(z)A_{m/3}(z^3),
 \qquad x=\varepsilon F(z).
\end{equation}
As a Laurent polynomial identity, this also holds when $A_3(z)=0$, including at a primitive cube root of unity. No division by the value of $A_3(z)$ is needed at those parameters.

As before, $z$ is a unit and $(z-1)^2=2z(u-1)$, so $\nu_3(z-1)\ge1/2$. Consequently $z$ reduces to~$1$, and $A_{m'}(z)$ is a unit because $3\nmid m'$. Furthermore,
\[
 A_3(z)=3+3(z-1)+(z-1)^2
\]
has valuation at least~$1$. If $z^3\ne1$, it follows that
\[
 \nu_3(z^3-1)
 =\nu_3(z-1)+\nu_3(A_3(z))\ge\tfrac32>\tfrac12.
\]
Lemma~\ref{lem:Am-valuation}, now applied at $z^3$, gives
\[
 \nu_3(A_{m/3}(z^3))=\nu_3(m/3)=s-1.
\]
If $z^3=1$, the same equality follows from $A_{m/3}(1)=m/3$. All other factors in~\eqref{eq:ternary-R-factor} are units, proving the assertion, including at the two rational fixed points.
\end{proof}

\begin{corollary}[Fixed points in a boundary disk at $p=3$]\label{cor:ternary-boundary-count}
Under the hypotheses of Proposition~\ref{prop:ternary-boundary-valuation}, the number of fixed lifts of $\varepsilon$ modulo $3^k$ is
\[
 B_k(s)=
 \begin{cases}
  3^{k-1},&1\le k\le s+1,\\
  2\cdot3^s,&k\ge s+2.
 \end{cases}
\]
For $k\ge s+2$, these lifts form the disjoint union of the two congruence classes
\[
 x\equiv\varepsilon\pmod{3^{k-s}}
 \qquad\text{and}\qquad
 x\equiv-\varepsilon/2\pmod{3^{k-s}},
\]
each containing $3^s$ classes modulo~$3^k$.
\end{corollary}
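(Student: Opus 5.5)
The count follows from the factorization in Proposition~\ref{prop:ternary-boundary-valuation}. For $x\in\varepsilon+3\Z_3$ we have
\[
 T_n(x)-x=(x-\varepsilon)(2x+\varepsilon)R_{n,\varepsilon}(x),
\]
so
\[
 \nu_3(T_n(x)-x)=\nu_3(x-\varepsilon)+\nu_3(2x+\varepsilon)+s-1 .
\]
Thus $x$ is fixed modulo $3^k$ exactly when
\[
 \nu_3(x-\varepsilon)+\nu_3(2x+\varepsilon)\ge k-s+1 .
\]
Write $a=\nu_3(x-\varepsilon)$ and $b=\nu_3(2x+\varepsilon)$. Since $2(x-\varepsilon)-(2x+\varepsilon)=-3\varepsilon$, we have $\nu_3\bigl(2(x-\varepsilon)-(2x+\varepsilon)\bigr)=1$. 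Both $a$ and $b$ are at least $1$, because $2\varepsilon+\varepsilon=3\varepsilon$. The ultrametric inequality then forces $\min(a,b)=1$: if both were at least $2$, the difference would have valuation at least $2$. If $a\ne b$, the minimum is exactly $1$. If $a=b$, we cannot have $a=b\ge2$, so $a=b=1$. So the condition becomes $a+b\ge k-s+1$ with $\min(a,b)=1$, i.e.\ $\max(a,b)\ge k-s$ (with equality case $a=b=1$ needing $2\ge k-s+1$).

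Next I would split into cases on $k$.

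\emph{Case $k\le s+1$.} The condition reads $a+b\ge k-s+1\le 2$, and $a+b\ge2$ always holds. Hence every lift of $\varepsilon$ is fixed, giving $3^{k-1}$ fixed lifts.

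\emph{Case $k\ge s+2$.} Here $k-s\ge2$. The case $a=b=1$ fails, since it gives $a+b=2<k-s+1$. Therefore exactly one of $a,b$ equals $1$, and the condition becomes $a\ge k-s$ or $b\ge k-s$. Since $2$ is a unit, $b\ge k-s$ is equivalent to $x\equiv-\varepsilon/2\pmod{3^{k-s}}$. The two classes modulo $3^{k-s}$ are disjoint because $k-s\ge2$ and $\varepsilon\not\equiv-\varepsilon/2\pmod 9$; indeed $\varepsilon+\varepsilon/2=3\varepsilon/2$ has valuation exactly $1$. Both classes lie in the residue disk of $\varepsilon$. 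Each congruence class modulo $3^{k-s}$ contains $3^{s}$ classes modulo $3^k$, for a total of $2\cdot3^s$.

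The main care point is the ultrametric bookkeeping that $\min(a,b)=1$ for every $x$ in the disk, including $x=\varepsilon$ and $x=-\varepsilon/2$, where one valuation is infinite. This is handled uniformly by the convention $\nu_3(0)=+\infty$. The rest is counting residue classes.
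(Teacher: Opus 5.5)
Your proposal is correct and follows the paper's proof step for step. It uses the same valuations $a=\nu_3(x-\varepsilon)$ and $b=\nu_3(2x+\varepsilon)$, and the same identity $2(x-\varepsilon)-(2x+\varepsilon)=-3\varepsilon$ to force $\min(a,b)=1$ and hence reduce fixedness to $\max(a,b)\ge k-s$; the case split at $k=s+2$ and the disjointness argument via $\nu_3(3\varepsilon/2)=1$ are also the same. One small point of presentation: the chained inequality ``$a+b\ge k-s+1\le2$'' is awkwardly written, though the intended meaning is clear.
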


\begin{proof}
For $x\in\varepsilon+3\Z_3$, put
\[
 a=\nu_3(x-\varepsilon),\qquad b=\nu_3(2x+\varepsilon).
\]
Both are at least~$1$. The identity
\[
 2(x-\varepsilon)-(2x+\varepsilon)=-3\varepsilon
\]
shows that they cannot both exceed~$1$, so $\min(a,b)=1$. Proposition~\ref{prop:ternary-boundary-valuation} therefore gives
\[
 \nu_3(T_n(x)-x)=a+b+s-1=\max(a,b)+s.
\]
This equality includes the exact roots under the convention $\nu_3(0)=+\infty$. If $k\le s+1$, every lift is fixed. If $k\ge s+2$, a lift is fixed if and only if $a\ge k-s$ or $b\ge k-s$, giving the two stated congruences. They are disjoint because the two rational roots differ by $3\varepsilon/2$, of valuation~$1$, while $k-s\ge2$. Each congruence determines $3^s$ classes modulo~$3^k$.
\end{proof}

\subsection{The fixed-point count at every level}
We now combine the local counts. The following elementary identity converts their powers of $p$ into the gcd terms of Theorem~\ref{thm:complete-fixed-count}.

\begin{lemma}[A gcd identity]\label{lem:gcd-saturation}
Let $M,m$ be positive integers with $p\nmid M$. For every $k\ge1$,
\[
 \gcd(m,Mp^{k-1})
 =\gcd(m,M)\,p^{\min(k-1,\nu_p(m))}.
\]
\end{lemma}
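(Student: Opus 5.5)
The plan is to compare $p$-parts and prime-to-$p$ parts separately, since $\gcd$ is multiplicative across coprime factors. Write $m=p^{\nu_p(m)}m_0$ with $p\nmid m_0$. Because $p\nmid M$, the integer $Mp^{k-1}$ factors as the coprime product $M\cdot p^{k-1}$. Then
\[
 \gcd(m,Mp^{k-1})=\gcd(m,M)\,\gcd(m,p^{k-1}),
\]
which holds because $\gcd(m,ab)=\gcd(m,a)\gcd(m,b)$ whenever $\gcd(a,b)=1$. To check this identity, compare the exponent of each prime $\ell$ on the two sides. If $\ell\ne p$, only $M$ contributes on the right. If $\ell=p$, only $p^{k-1}$ contributes, because $\nu_p(M)=0$.

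Next I evaluate $\gcd(m,p^{k-1})$. Its only prime factor can be $p$, and its $p$-exponent is $\min(\nu_p(m),k-1)$, so $\gcd(m,p^{k-1})=p^{\min(k-1,\nu_p(m))}$. Substituting this into the previous display gives the claimed formula. The boundary case $k=1$ is consistent: both sides reduce to $\gcd(m,M)$.

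I do not expect a real obstacle. The only point needing care is to use the hypothesis $p\nmid M$ explicitly, because it is what makes $M$ and $p^{k-1}$ coprime. Without it the factorization fails; for example, $M=p$, $m=p^2$, $k=2$ gives $p^2$ on the left but $p\cdot p=p^2$ only by coincidence, and the identity breaks in general.
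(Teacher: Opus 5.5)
Your argument is correct and is essentially the paper's. Both separate the $p$-part from the prime-to-$p$ part: the paper writes $m=p^v m_0$ and notes $\gcd(m_0,M)=\gcd(m,M)$, while you use the coprime splitting $\gcd(m,Mp^{k-1})=\gcd(m,M)\gcd(m,p^{k-1})$. One small slip is in your closing aside: $M=p$, $m=p^2$, $k=2$ actually satisfies the identity, so it does not show failure; $M=p$, $m=p$, $k=2$ does, giving $p$ on the left and $p^2$ on the right.
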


\begin{proof}
Write $m=p^v m_0$ with $p\nmid m_0$. Since $p\nmid M$, the left-hand side is $\gcd(m_0,M)p^{\min(k-1,v)}$, and $\gcd(m_0,M)=\gcd(m,M)$.
\end{proof}

For use here and for later applications to iterates, we collect the local conclusions in one statement.

\begin{lemma}[Local form of the fixed-point count]\label{lem:local-form}
Let $a_0\in\Fp$ be fixed by $T_n$, and put $s=\nu_p(n^2-1)$. Its number of fixed lifts modulo $p^k$ is
\[
 \begin{cases}
  1,&a_0\text{ nondegenerate},\\
  B_k(s),&a_0\text{ degenerate},\ p=3,\ a_0\in\{\pm1\},\\
  p^{\min(k-1,s)},&\text{every other degenerate case}.
 \end{cases}
\]
\end{lemma}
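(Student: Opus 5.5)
This lemma only collects the local results already proved, so I will argue case by case according to the three lines of the display. No new estimates are needed; the work is in checking that the cases are exhaustive and that the hypotheses of each earlier result are met.

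\emph{Nondegenerate case.} If $a_0$ is nondegenerate, then $a_0$ is a simple root of $T_n(x)-x$ over $\Fp$. Lemma~\ref{thm:hensel}, applied with $R=\Z_p$ and $g=T_n(x)-x$, gives exactly one root modulo $p^k$ reducing to $a_0$, that is, one fixed lift. For $k=1$ the count is trivially $1$.

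\emph{Degenerate cases.} By Proposition~\ref{prop:degen}, a degenerate residue exists only when $p\mid n^2-1$, so $s\ge1$. I then split according to whether $a_0$ is a boundary point and whether $p=3$.
\begin{enumerate}[label=\textup{(\roman*)},nosep]
\item If $a_0\notin\{\pm1\}$, Lemma~\ref{lem:local-nonboundary} gives $p^{\min(k-1,s)}$ fixed lifts for every odd $p$, including $p=3$.
\item If $a_0=\varepsilon\in\{\pm1\}$ and $p\ge5$, Lemma~\ref{lem:local-boundary} gives the same count $p^{\min(k-1,s)}$.
\item If $a_0=\varepsilon\in\{\pm1\}$ and $p=3$, I apply Corollary~\ref{cor:ternary-boundary-count}. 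Its hypotheses are $3\nmid n$ and $T_n(\varepsilon)=\varepsilon$ over $\Z$. The first holds because $3\mid n^2-1$. For the second, the boundary residue $1$ is always fixed, and $-1$ is fixed modulo $3$ only when $n$ is odd, in which case $T_n(-1)=-1$ exactly by the stated values of $T_m(-1)$. The corollary then gives $B_k(s)$.
\end{enumerate}
Cases (i) and (ii) together are exactly the ``every other degenerate case'' line, and case (iii) is the middle line.

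The only point requiring care is in case (iii): checking that a fixed boundary residue modulo $3$ comes from an exact integer fixed point. This is needed because Corollary~\ref{cor:ternary-boundary-count} is stated for a rational fixed point $\varepsilon$, not for a residue. Everything else is a direct citation.
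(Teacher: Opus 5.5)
Your proposal is correct and follows the paper's proof, which simply combines Hensel's lemma, Lemmas~\ref{lem:local-nonboundary} and~\ref{lem:local-boundary}, and Corollary~\ref{cor:ternary-boundary-count} case by case as you do. Your added check in case~(iii) is sound and makes explicit what the paper leaves implicit. There, a degenerate boundary residue at $p=3$ forces $3\nmid n$ and comes from an exact fixed point $T_n(\varepsilon)=\varepsilon$. The same exactness also underlies $Q_\varepsilon\in\Z[x]$ in your case~(ii).
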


\begin{proof}
Combine Hensel's lemma, Lemmas~\ref{lem:local-nonboundary} and~\ref{lem:local-boundary}, and Corollary~\ref{cor:ternary-boundary-count}.
\end{proof}

\begin{proof}[Proof of Theorem~\ref{thm:complete-fixed-count}]
Put $L=p^{\min(k-1,s)}$. Lemma~\ref{lem:local-form} assigns one fixed lift to each nondegenerate residue and $L$ to each degenerate residue, except at a degenerate boundary residue when $p=3$. For $p=3$, this exception occurs precisely when $3\nmid n$. There are then $\beta$ fixed boundary residues, and Corollary~\ref{cor:ternary-boundary-count} gives
\[
 B_k(s)-L=
 \begin{cases}
  0,&k\le s+1,\\
  3^s,&k\ge s+2.
 \end{cases}
\]
Thus the total additional contribution is exactly $\Delta_{p,n,k}$ from~\eqref{eq:ternary-correction}. Summing over the fixed residues yields
\[
 N_k=(N_1-d)+dL+\Delta_{p,n,k},
\]
which proves~\eqref{eq:complete-fixed-lifting}.

To obtain~\eqref{eq:complete-fixed-count}, first suppose $s=0$. Then $d=0$ and $G_{i,k}=G_i$ for every~$i$, so Proposition~\ref{thm:N1} gives the required formula. If $s>0$, exactly one of $n-1$ and $n+1$ is divisible by~$p$. When $p\mid n-1$, Lemma~\ref{lem:gcd-saturation} gives
\[
 G_{1,k}=LG_1,\qquad G_{3,k}=LG_3,\qquad
 G_{2,k}=G_2,\qquad G_{4,k}=G_4.
\]
Since $d=(G_1+G_3)/2$, it follows that
\[
 N_1+d(L-1)
 =\frac{G_{1,k}+G_{2,k}+G_{3,k}+G_{4,k}-2\beta}{2}.
\]
When $p\mid n+1$, the same calculation uses $G_{2,k}=LG_2$, $G_{4,k}=LG_4$, and $d=(G_2+G_4)/2$. Adding $\Delta_{p,n,k}$ completes the proof.
\end{proof}

\begin{example}\label{ex:ternary-n2}
For $p=3$ and $n=2$, one has $N_1=d=s=\beta=1$, and Theorem~\ref{thm:complete-fixed-count} gives
\[
 N_1=1,\qquad N_2=3,\qquad N_k=6\quad(k\ge3).
\]
Indeed, $T_2(x)-x=(x-1)(2x+1)$. Its rational roots $1$ and $-1/2$ have the same reduction modulo $3$ but distinct reductions modulo~$9$. The six fixed classes modulo $27$ are the three lifts of each of
\[
 x\equiv1\pmod9,\qquad x\equiv-1/2\equiv4\pmod9.
\]

For the permutation degree $n=11$ at $p=3$, one has $N_1=d=3$, $s=\nu_3(120)=1$, and $\beta=2$. The formula gives $N_1=3$, $N_2=9$, and $N_k=15$ for $k\ge3$. These counts agree with~\cite[Table~IV]{LLTC2025}. The two boundary residues contribute the additional $2\cdot3=6$ fixed points at $k\ge3$, explaining why the stabilization threshold proved there for $p>3$ does not apply unchanged at~$p=3$.
\end{example}

\section{Signed orders, return multipliers, and first lifts}\label{sec:period-Fp}

We now pass from fixed points to exact periods. The source order determines a point's period modulo $p$. When $p\nmid n$, the derivative of the return map then determines the periods of its lifts modulo $p^2$. The connection between these two steps is a signed order modulo~$ep$, where $e$ is the source order.

\subsection{Finite-field periods}
We use the source orders of Section~\ref{sec:intro} and the signed order of Definition~\ref{def:signed-order}. The finite-field period criterion below is part of the known graph descriptions~\cite{Gassert2014,QP2019,HutzPatel2022}. We first record the elementary properties of signed orders needed both here and at higher prime powers.

\begin{samepage}
\begin{lemma}[Properties of signed order]\label{lem:cord-vs-ord}
Let $M\ge1$ and $\gcd(n,M)=1$. For every $r\ge1$,
\[
 n^r\equiv\pm1\pmod M
 \quad\Longleftrightarrow\quad \cord_M(n)\mid r.
\]
Consequently, $M'\mid M$ implies $\cord_{M'}(n)\mid\cord_M(n)$ for positive~$M'$.

For $M\ge2$, put $\rho=\operatorname{ord}_M(n)$. Then
\[
 \cord_M(n)=
 \begin{cases}
  \rho/2,&\rho\text{ even and }n^{\rho/2}\equiv-1\pmod M,\\
  \rho,&\text{otherwise}.
 \end{cases}
\]
If $M=p$ is an odd prime, then $\cord_p(n)=\operatorname{ord}_p(n^2)$.
\end{lemma}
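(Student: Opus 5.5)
The plan is to reduce everything to the structure of the quotient group $(\Z/M\Z)^\times/\{\pm1\}$ and to ordinary orders. For $M\ge3$ we have $-1\not\equiv1\pmod M$, so $\{\pm1\}$ is a subgroup of order two. The condition $n^r\equiv\pm1\pmod M$ says exactly that the image of $n^r$ in the quotient is trivial. The standard fact that $g^r=1$ if and only if $\operatorname{ord}(g)\mid r$ then gives the first equivalence. The degenerate moduli $M=1,2$ are handled directly: there $\pm1$ is everything (or $n\equiv1$), so $n^r\equiv\pm1$ holds for all $r$, and $\cord_M(n)=1$, consistent with the definition.

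The divisibility consequence comes next. Suppose $M'\mid M$ and put $r=\cord_M(n)$. Then $n^r\equiv\pm1\pmod M$ reduces to $n^r\equiv\pm1\pmod{M'}$. By the first part, applied at $M'$ (including the trivial cases $M'\in\{1,2\}$), this gives $\cord_{M'}(n)\mid r$.

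For the formula in terms of $\rho=\operatorname{ord}_M(n)$, let $c=\cord_M(n)$. Since $n^\rho=1$, we get $c\mid\rho$. Also $n^{2c}\equiv(\pm1)^2=1$, so $\rho\mid 2c$. Hence $\rho/c\in\{1,2\}$.
\begin{itemize}
\item If $\rho=2c$, then $\rho$ is even. Moreover $n^{\rho/2}=n^c\equiv\pm1$, and it is not $+1$ by minimality of $\rho$, so $n^{\rho/2}\equiv-1$.
\item Conversely, if $\rho$ is even and $n^{\rho/2}\equiv-1$, then $c\mid\rho/2<\rho$, which forces $c=\rho/2$.
\end{itemize}
Otherwise $c=\rho$. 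For $M=2$ the case distinction is consistent, since $\rho=1$ there.

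Finally, take $M=p$ an odd prime, and let $c=\cord_p(n)$ and $\rho_2=\operatorname{ord}_p(n^2)$. The key point is that in the field $\Fp$, $x^2=1$ holds if and only if $x=\pm1$. Therefore
\[
n^r\equiv\pm1 \iff n^{2r}\equiv1 \iff \rho_2\mid r.
\]
Comparing with the first part gives $c=\rho_2$.

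I do not expect a real obstacle here. The only care needed is with the small moduli $M\le2$, where $-1\equiv1$ and the quotient description breaks down, and with making sure the field property (no square roots of unity other than $\pm1$) is used only in the prime case. For composite $M$ that property fails, which is why the general statement is phrased through $\rho$ instead.
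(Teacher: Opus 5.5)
Your proof is correct and follows the paper's argument. It identifies the first equivalence via the quotient $(\Z/M\Z)^\times/\{\pm1\}$, gets the divisibility by reducing modulo $M'$, and handles the prime case using that $X^2=1$ has only the roots $\pm1$ in $\Fp$. The one small variation is in the $\rho$-formula: you use the sandwich $c\mid\rho\mid 2c$, whereas the paper notes that the image of $\langle n\rangle$ in the quotient has order $\rho/2$ exactly when $-1\in\langle n\rangle$, i.e.\ when $n^{\rho/2}\equiv-1$; the two arguments are equivalent and equally short.
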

\end{samepage}

\begin{proof}
For $M\ge3$, the signed order is the order of the class of $n$ in $(\Z/M\Z)^\times/\{\pm1\}$, so the first equivalence follows from the definition of the order of a group element. The cyclic subgroup $\langle n\rangle$ has order $\rho$. Its image in this quotient has order $\rho/2$ when $-1\in\langle n\rangle$, and order $\rho$ otherwise. Since a cyclic group of even order has the unique element $n^{\rho/2}$ of order~$2$, this gives the displayed formula. For $M=1,2$, the assertions follow from the conventions in Definition~\ref{def:signed-order}; when $M=2$, coprimality forces $n$ odd and $\rho=1$. Reducing the congruence $n^{\cord_M(n)}\equiv\pm1\pmod M$ modulo a divisor $M'$ proves the divisibility assertion.

Over $\Fp$, the equation $X^2=1$ has exactly the roots $\pm1$. Hence $n^{2r}\equiv1\pmod p$ is equivalent to $n^r\equiv\pm1\pmod p$, and taking the least such $r$ proves the last identity. This identity need not hold for composite moduli: $\cord_8(3)=2$, whereas $\operatorname{ord}_8(3^2)=1$.
\end{proof}

\begin{proposition}[Period from the source order]\label{thm:period}
Let $a\in\Fp$ have source order $e$. Then $a$ is periodic under $T_n$ if and only if $\gcd(n,e)=1$. In that case,
\[
 \operatorname{per}_p(a)=\cord_e(n).
\]
\end{proposition}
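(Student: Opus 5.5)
The plan is to transport the dynamics of $T_n$ on $\Fp$ to the power map $\zeta\mapsto\zeta^n$ on the union $\Fp^\times\cup\mu_{p+1}$, modulo inversion. Proposition~\ref{prop:two-source} identifies the fibers of $F$ on this union with inversion orbits. The identity $T_n(F(\zeta))=F(\zeta^n)$ from~\eqref{eq:param} and the composition law $T_n^{\circ j}=T_{n^j}$ then give, for a source parameter $\zeta$ of $a$,
\[
 T_n^{\circ j}(a)=a\quad\Longleftrightarrow\quad \zeta^{n^j}\in\{\zeta,\zeta^{-1}\}.
\]
Here $\zeta^{n^j}$ lies in the same source group, since both groups are closed under powers, so the fiber description applies directly.

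\textbf{Periodic case.} Let $e$ be the order of $\zeta$ in its cyclic source group. Then $\zeta^{n^j}=\zeta^{\pm1}$ holds iff $\zeta^{n^j\mp1}=1$, that is, iff $n^j\equiv\pm1\pmod e$. This condition forces $\gcd(n,e)=1$. Conversely, if $\gcd(n,e)=1$, take $j=\operatorname{ord}_e(n)$ (or $j=1$ when $e=1$); then $n^j\equiv1$, so $a$ is periodic. This proves the criterion in one direction.

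\textbf{Exact period.} Assuming $\gcd(n,e)=1$, the least $j\ge1$ with $n^j\equiv\pm1\pmod e$ is by definition $\cord_e(n)$. This covers $e=1,2$ under the conventions of Definition~\ref{def:signed-order}, since then $n^1\equiv\pm1$ trivially.

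\textbf{Non-periodic case.} Suppose $g=\gcd(n,e)>1$ and $a$ were periodic, with $T_n^{\circ j}(a)=a$ for some $j\ge1$. Then $e\mid n^j\mp1$. But $g$ divides both $e$ and $n^j$, so $g\mid 1$, a contradiction.

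The main point needing care is that the equivalence $F(\zeta')=F(\zeta)\iff\zeta'=\zeta^{\pm1}$ is applied with $\zeta'=\zeta^{n^j}$ inside $\Fpp^\times$. This is exactly the injectivity statement proved in Proposition~\ref{prop:two-source}, including the boundary cases $\zeta=\pm1$, so nothing further is needed. The source order is independent of the choice between $\zeta$ and $\zeta^{-1}$, so the conclusion is well defined.
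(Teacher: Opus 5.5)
Your proof is correct and is essentially the paper's argument. The paper applies the fixed-point factorization of Lemma~\ref{lem:fixed-factor} with degree $n^j$ to obtain $T_n^{\circ j}(a)=a\iff n^j\equiv\pm1\pmod e$, which is the same identity you reach through the fiber injectivity of Proposition~\ref{prop:two-source}; your remaining steps (the coprimality forced by the congruence, existence of an exponent under coprimality, and minimality giving $\cord_e(n)$, including $e=1,2$) match the paper's.
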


\begin{proof}
Choose a source parameter of order $e$. For every $j\ge1$, the composition identity and Lemma~\ref{lem:fixed-factor}, applied with degree $n^j$, give
\[
 T_n^{\circ j}(a)=a
 \quad\Longleftrightarrow\quad n^j\equiv\pm1\pmod e.
\]
This includes the boundary source orders $e=1,2$. If the congruence holds, then $\gcd(n,e)=1$. Conversely, coprimality ensures that such an exponent exists, and its least positive value is $\cord_e(n)$.
\end{proof}

Counting source parameters of each order now gives the number of points of any prescribed exact period.

\begin{corollary}[Direct periodic-point count]\label{cor:Pr-direct}
For $r\ge1$, the number of points of exact period $r$ under $T_n$ on $\Fp$ is
\begin{equation}\label{eq:Pr-direct}
 P_r^{(1)}=\beta[r=1]
 +\frac12\sum_{\substack{e\mid p-1,\ e\ge3\\
                         \gcd(n,e)=1,\ \cord_e(n)=r}}\varphi(e)
 +\frac12\sum_{\substack{e\mid p+1,\ e\ge3\\
                         \gcd(n,e)=1,\ \cord_e(n)=r}}\varphi(e).
\end{equation}
\end{corollary}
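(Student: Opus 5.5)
The plan is to count points of $\Fp$ through their source parameters and apply Proposition~\ref{thm:period} to each one. By Proposition~\ref{prop:two-source}, the map $F$ restricted to $\Fp^\times\cup\mu_{p+1}$ is surjective onto $\Fp$. Its fibers are the inversion orbits: the singletons $\{1\}$ and $\{-1\}$, and the pairs $\{\zeta,\zeta^{-1}\}$ with $\zeta\ne\pm1$, each pair lying in exactly one source group. Inversion preserves order, so every point has a well-defined source order $e$. Proposition~\ref{thm:period} then shows that a point has exact period $r$ exactly when $\gcd(n,e)=1$ and $\cord_e(n)=r$.

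First treat the boundary points. The point $1$ has source order $e=1$. We have $\gcd(n,1)=1$ and $\cord_1(n)=1$, so $1$ is always fixed. The point $-1$ has source order $2$. It is periodic only when $n$ is odd, and then $\cord_2(n)=1$. Together these contribute $\beta[r=1]$ with $\beta=\gcd(n-1,2)$, which agrees with the Remark after Lemma~\ref{lem:fixed-factor}.

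Next treat the nonboundary points, which are exactly the images of parameters of order $e\ge3$. Such parameters come in two kinds. In $\Fp^\times$, a cyclic group of order $p-1$, there are $\varphi(e)$ elements of order $e$ for each $e\mid p-1$. In $\mu_{p+1}$, a cyclic group of order $p+1$, there are $\varphi(e)$ elements of order $e$ for each $e\mid p+1$. For $e\ge3$ these sets avoid $\pm1$, so they do not overlap across the two groups. Each such set is closed under inversion, and inversion has no fixed points on it because $\zeta=\zeta^{-1}$ would force $\zeta=\pm1$. Hence these parameters map two-to-one onto their images, and distinct $(e,\text{group})$ pairs give disjoint sets of points. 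Summing $\varphi(e)/2$ over the admissible $e$ in each group gives the two sums in~\eqref{eq:Pr-direct}.

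The only real care is in the bookkeeping. The boundary parameters must be counted once, even though they belong to both source groups. This is why they are separated out with the condition $e\ge3$. The factor $1/2$ is then justified by the free inversion action described above. Nothing else presents an obstacle.
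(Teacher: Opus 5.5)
Your proposal is correct and follows essentially the same route as the paper. Both arguments separate off the boundary term $\beta[r=1]$, count $\varphi(e)/2$ residues for each admissible order $e\ge3$ in each source group using the fixed-point-free inversion action, use $\Fp^\times\cap\mu_{p+1}=\{\pm1\}$ for disjointness, and apply Proposition~\ref{thm:period} to select the orders with $\gcd(n,e)=1$ and $\cord_e(n)=r$.
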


\begin{proof}
A cyclic source group contains $\varphi(e)$ parameters of order $e$ for each divisor $e$ of its order. For $e\ge3$, inversion pairs these parameters without fixed points, so Proposition~\ref{prop:two-source} gives $\varphi(e)/2$ distinct residues. Proposition~\ref{thm:period} selects exactly the orders that contribute to period~$r$. The two source groups intersect only at $\pm1$, so the sums have no overlap. The periodic boundary points are precisely the $\beta$ fixed boundary points, giving the term $\beta[r=1]$.
\end{proof}

\subsection{Return multipliers and the first lift}\label{sec:first-period-lift}
When $p\nmid n$, the return multiplier from Section~\ref{sec:intro} governs the first lift through the usual affine description of cycle lifting~\cite{desJardinsZieve2001,Nara2025}. For Chebyshev maps, its order can be computed from a signed congruence modulo~$ep$. The next lemma isolates the required arithmetic: when $e\ge3$, the congruences modulo $e$ and modulo $p$ must use the same sign.

\begin{samepage}
\begin{lemma}[Sign matching]\label{lem:first-lift-sign}
Assume $p\nmid n$, and let $e\ge1$ satisfy $\gcd(n,e)=1$ and $e\mid p-1$ or $e\mid p+1$. Put
\[
 t=\cord_e(n),\qquad \kappa=\cord_{ep}(n).
\]
Then $t\mid\kappa$. If $e\ge3$, let $\sigma\in\{\pm1\}$ be the unique sign such that $n^t\equiv\sigma\pmod e$. For every $u\ge1$,
\[
 \kappa\mid tu
 \quad\Longleftrightarrow\quad n^{tu}\equiv\sigma^u\pmod p.
\]
If $e=1$ or $e=2$, then $t=1$ and, for every $u\ge1$,
\[
 \kappa\mid u
 \quad\Longleftrightarrow\quad n^{2u}\equiv1\pmod p.
\]
In particular, $\kappa=t$ if and only if $n^t\equiv\sigma\pmod p$ in the nonboundary case, or $n^2\equiv1\pmod p$ in the boundary cases.
\end{lemma}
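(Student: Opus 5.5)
The plan is to reduce everything to Lemma~\ref{lem:cord-vs-ord} together with the Chinese remainder theorem for the coprime moduli $e$ and $p$. Note $\gcd(e,p)=1$, since $e$ divides $p\pm1$. Also $\gcd(n,ep)=1$ because $p\nmid n$, so $\kappa$ is defined.

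\textbf{Divisibility $t\mid\kappa$.} This is the divisibility clause of Lemma~\ref{lem:cord-vs-ord} applied to $e\mid ep$.

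\textbf{Case $e\ge3$.} First I check that $\sigma$ is well defined: $n^t\equiv\pm1\pmod e$ by definition of $t$, and $1\not\equiv-1\pmod e$ because $e\ge3$.

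By Lemma~\ref{lem:cord-vs-ord}, $\kappa\mid tu$ holds if and only if $n^{tu}\equiv\delta\pmod{ep}$ for some $\delta\in\{\pm1\}$. By the Chinese remainder theorem this is equivalent to two simultaneous conditions: $n^{tu}\equiv\delta\pmod e$ and $n^{tu}\equiv\delta\pmod p$.

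Modulo $e$ we have $n^{tu}\equiv\sigma^u$. Since $\pm1$ are distinct mod $e$, this forces $\delta=\sigma^u$. Conversely, taking $\delta=\sigma^u$ makes the congruence modulo $e$ automatic. So the condition reduces to $n^{tu}\equiv\sigma^u\pmod p$, as claimed.

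This sign-matching step is the only genuine content. The point to handle carefully is that the sign is forced modulo $e$ only because $e\ge3$; modulo $p$ alone either sign would be allowed.

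\textbf{Case $e\in\{1,2\}$.} Here $t=1$ by Definition~\ref{def:signed-order}. When $e=2$, $n$ is odd.

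For $e=1$, $\kappa=\cord_p(n)$, and by the last clause of Lemma~\ref{lem:cord-vs-ord} $\kappa\mid u$ iff $n^u\equiv\pm1\pmod p$, i.e. iff $n^{2u}\equiv1\pmod p$.

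For $e=2$, $\kappa=\cord_{2p}(n)$. Since $n$ is odd, $n^u\equiv\pm1\pmod 2$ holds automatically for either sign. By the Chinese remainder theorem, $n^u\equiv\pm1\pmod{2p}$ is therefore equivalent to $n^u\equiv\pm1\pmod p$, which again is equivalent to $n^{2u}\equiv1\pmod p$. The key observation is that the sign is not constrained modulo $2$.

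\textbf{Final assertion.} Since $t\mid\kappa$, we have $\kappa=t$ if and only if $\kappa\mid t$. Taking $u=1$ in the two equivalences gives exactly the stated criteria: $n^t\equiv\sigma\pmod p$ in the nonboundary case, and $n^2\equiv1\pmod p$ in the boundary cases.
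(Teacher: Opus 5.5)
Your proof is correct and follows the paper's argument essentially step for step. Both use the divisibility clause of Lemma~\ref{lem:cord-vs-ord}, then the Chinese remainder theorem: the sign is forced modulo $e$ when $e\ge3$ and unconstrained when $e\in\{1,2\}$, and $u=1$ gives the final criterion. Your explicit checks that $\sigma$ is well defined and that $\gcd(n,ep)=1$ are minor details the paper leaves implicit.
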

\end{samepage}

\begin{proof}
Lemma~\ref{lem:cord-vs-ord} gives $t\mid\kappa$. Since $e\mid p-1$ or $e\mid p+1$, we have $\gcd(e,p)=1$. For $e\ge3$, the congruence $n^{tu}\equiv\sigma^u\pmod e$ already holds, with its sign uniquely determined. The Chinese remainder theorem therefore gives
\[
 n^{tu}\equiv\pm1\pmod{ep}
 \quad\Longleftrightarrow\quad n^{tu}\equiv\sigma^u\pmod p.
\]
The first equivalence follows from Lemma~\ref{lem:cord-vs-ord}.

For $e=1,2$, the congruence modulo $e$ imposes no sign restriction; when $e=2$, $n$ is odd by coprimality. Thus $n^u\equiv\pm1\pmod{ep}$ is equivalent to $n^u\equiv\pm1\pmod p$, or to $n^{2u}\equiv1\pmod p$. Applying Lemma~\ref{lem:cord-vs-ord} proves the boundary assertion. Finally, taking $u=1$ gives the criteria for $\kappa=t$.
\end{proof}

\begin{proposition}[Return multiplier and signed order]\label{prop:return-multiplier}
Assume $p\nmid n$, let $a_0\in\Fp$ be periodic with source order $e$, and put
\[
 t=\cord_e(n),\qquad
 \lambda=(T_n^{\circ t})'(a_0)\in\Fp^\times.
\]
Then
\begin{equation}\label{eq:return-multiplier-order}
 \operatorname{ord}_{\Fp^\times}(\lambda)
 =\frac{\cord_{ep}(n)}{\cord_e(n)}.
\end{equation}
In particular, this quotient divides $p-1$.
\end{proposition}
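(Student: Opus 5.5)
We compute $\lambda$ explicitly and then read off its order using Lemma~\ref{lem:first-lift-sign}. The composition law gives $T_n^{\circ t}=T_{n^t}$, so $\lambda=T_{n^t}'(a_0)=n^tU_{n^t-1}(a_0)$. Proposition~\ref{thm:period} shows that $a_0$ is fixed by $T_{n^t}$, so Proposition~\ref{prop:degen}, applied with degree $m=n^t$ in place of $n$, evaluates $\lambda$.

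\emph{Nonboundary case ($e\ge3$).} Choose a source parameter $\zeta_0$ of order $e$. Let $\sigma$ be the sign with $n^t\equiv\sigma\pmod e$; it is unique because $e\ge3$. Then $\zeta_0^{n^t-\sigma}=1$, so $\sigma$ is the branch sign of $a_0$ for $T_{n^t}$. Proposition~\ref{prop:degen}(a) therefore gives $\lambda=\sigma n^t$ in $\Fp$. Consequently, for $u\ge1$,
\[
 \lambda^u=1\iff n^{tu}\equiv\sigma^u\pmod p\iff\kappa\mid tu,
\]
where $\kappa=\cord_{ep}(n)$ and the last equivalence is Lemma~\ref{lem:first-lift-sign}. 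Since $t\mid\kappa$, the condition $\kappa\mid tu$ is equivalent to $(\kappa/t)\mid u$. The least such $u$ is $\kappa/t$, so $\operatorname{ord}(\lambda)=\kappa/t$.

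\emph{Boundary case ($e\in\{1,2\}$).} Here $a_0=\varepsilon\in\{\pm1\}$ and $t=1$. Proposition~\ref{prop:degen}(b) gives $\lambda=T_n'(\varepsilon)=n^2$. The boundary clause of Lemma~\ref{lem:first-lift-sign} then gives $\lambda^u=1\iff n^{2u}\equiv1\pmod p\iff\kappa\mid u$, so again $\operatorname{ord}(\lambda)=\kappa=\kappa/t$.

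The final claim is immediate: $\lambda\in\Fp^\times$ because $p\nmid n$, so its order divides $p-1$.

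The main point needing care is the nonboundary identification of the branch sign for the iterate. The sign appearing in the derivative formula must be the same $\sigma$ that appears in $n^t\equiv\sigma\pmod e$. This holds because $\zeta_0$ has order exactly $e$, so $\zeta_0^{n^t}=\zeta_0^{\sigma}$ is the relevant power equation.
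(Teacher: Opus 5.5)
Your proposal is correct and follows the paper's proof essentially step for step. You evaluate $\lambda=\sigma n^t$ (respectively $\lambda=n^2$ at the boundary) by applying Proposition~\ref{prop:degen} to $T_{n^t}$, then read off the order from Lemma~\ref{lem:first-lift-sign} using $t\mid\kappa$; your explicit check that the branch sign for the iterate is the sign of $n^t$ modulo $e$ is a detail the paper leaves implicit.
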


\begin{proof}
Write $\kappa=\cord_{ep}(n)$. If $e\ge3$, let $\sigma$ be the sign of $n^t$ modulo $e$. Proposition~\ref{prop:degen}, applied to the return map $T_{n^t}$, gives $\lambda=\sigma n^t$ in~$\Fp$. Consequently, Lemma~\ref{lem:first-lift-sign} gives, for every $u\ge1$,
\[
 \lambda^u=1
 \quad\Longleftrightarrow\quad n^{tu}\equiv\sigma^u\pmod p
 \quad\Longleftrightarrow\quad \kappa\mid tu.
\]
Since $t\mid\kappa$, the least such $u$ is $\kappa/t$.

If $e=1,2$, then $t=1$ and Proposition~\ref{prop:degen}(b) gives $\lambda=n^2$ in~$\Fp$. The boundary assertion of Lemma~\ref{lem:first-lift-sign} shows that $\lambda^u=1$ precisely when $\kappa\mid u$, giving the same formula. In all cases $\lambda\ne0$ because $p\nmid n$, and its order divides $p-1$.
\end{proof}

The same calculation identifies which iterates have a degenerate fixed point at~$a_0$. This will allow the local counts from Section~\ref{sec:lifting} to be applied to iterates in Section~\ref{sec:higher-level}.

\begin{lemma}[Degeneracy for iterates]\label{lem:iterate-degen-sign}
Assume $p\nmid n$, let $a_0\in\Fp$ be periodic with source order $e$, and put $t=\cord_e(n)$. If $j\ge1$ and $t\mid j$, then $a_0$ is a degenerate fixed point of $T_{n^j}$ if and only if
\[
 n^j\equiv\pm1\pmod{ep},
\]
or equivalently, $\cord_{ep}(n)\mid j$.
\end{lemma}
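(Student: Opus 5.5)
Since $t\mid j$, Proposition~\ref{thm:period} shows that $a_0$ is fixed by $T_n^{\circ j}=T_{n^j}$. We then apply Proposition~\ref{prop:degen} to the degree $m=n^j$ and split into the boundary and nonboundary cases, exactly as in the proof of Proposition~\ref{prop:return-multiplier} but with $j$ in place of $t$.

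\emph{Nonboundary case ($e\ge3$).} Let $\sigma$ be the sign with $n^t\equiv\sigma\pmod e$, and write $j=tu$. Then $n^j\equiv\sigma^u\pmod e$, and this sign is unique because $e\ge3$. A source parameter $\zeta_0$ has order $e$, so $\zeta_0^{n^j-\sigma^u}=1$. Hence the branch sign of $a_0$ for $T_{n^j}$ is $\sigma^u$, and Proposition~\ref{prop:degen}(a) gives $T_{n^j}'(a_0)=\sigma^u n^j$ in $\Fp$. Therefore $a_0$ is degenerate if and only if $n^j\equiv\sigma^u\pmod p$. By the Chinese remainder argument of Lemma~\ref{lem:first-lift-sign} (using $\gcd(e,p)=1$ and the sign already forced modulo $e$), this is equivalent to $n^j\equiv\pm1\pmod{ep}$, and also to $\kappa\mid tu=j$.

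\emph{Boundary case ($e=1,2$).} Here $a_0=\varepsilon\in\{\pm1\}$ and $T_{n^j}$ fixes it, so Proposition~\ref{prop:degen}(b) gives $T_{n^j}'(\varepsilon)=n^{2j}$. Thus $a_0$ is degenerate if and only if $n^{2j}\equiv1\pmod p$, i.e.\ $n^j\equiv\pm1\pmod p$. This is equivalent to $n^j\equiv\pm1\pmod{ep}$, because the congruence modulo $e\in\{1,2\}$ imposes nothing ($n$ is odd when $e=2$). Lemma~\ref{lem:first-lift-sign}, boundary part, with $u=j$, converts this to $\cord_{ep}(n)\mid j$. In both cases the final equivalence with $\cord_{ep}(n)\mid j$ is just Lemma~\ref{lem:cord-vs-ord} applied with $M=ep$.

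The only delicate point is identifying the branch sign for the iterate as $\sigma^u$, which requires $e\ge3$ so that the sign modulo $e$ is well defined. I expect no real obstacle beyond this bookkeeping.
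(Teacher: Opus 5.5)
Your argument is correct and is essentially the paper's. The paper obtains $(T_n^{\circ j})'(a_0)=\lambda^u$ from the chain rule and then quotes Proposition~\ref{prop:return-multiplier}, whereas you compute the same value $\sigma^u n^j$ (or $n^{2j}$ at the boundary) directly from Proposition~\ref{prop:degen} at degree $n^j$, which inlines that proposition's proof; both versions then finish with the sign matching of Lemma~\ref{lem:first-lift-sign} and Lemma~\ref{lem:cord-vs-ord}.
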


\begin{proof}
Write $j=tu$ and $\lambda=(T_n^{\circ t})'(a_0)$. The return map fixes $a_0$, so the chain rule gives
\[
 (T_n^{\circ j})'(a_0)=\lambda^u.
\]
By Proposition~\ref{prop:return-multiplier}, this equals $1$ precisely when $\cord_{ep}(n)/t$ divides $u$, or equivalently, $\cord_{ep}(n)\mid j$. Lemma~\ref{lem:cord-vs-ord} translates the divisibility condition into the stated congruence.
\end{proof}

\begin{theorem}[First-lift period dichotomy]\label{thm:first-period-lift}
Let $p$ be an odd prime, let $n\ge2$ with $p\nmid n$, and let $a_0\in\Fp$ be periodic with source order $e$. Put
\[
 t=\operatorname{per}_p(a_0)=\cord_e(n),\qquad
 \kappa=\cord_{ep}(n).
\]
Then $t\mid\kappa$, every lift of $a_0$ modulo $p^2$ is periodic, and its exact period is either $t$ or $\kappa$. More precisely:
\begin{enumerate}[label=\textup{(\alph*)},nosep]
\item If $\kappa=t$, all $p$ lifts have exact period~$t$.
\item If $\kappa>t$, exactly one lift has exact period $t$, and the other $p-1$ lifts have exact period~$\kappa$.
\end{enumerate}
\end{theorem}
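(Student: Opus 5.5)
Throughout, write $\lambda$ for the return multiplier of Proposition~\ref{prop:return-multiplier}. The divisibility $t\mid\kappa$ is already in Lemma~\ref{lem:first-lift-sign}. The plan is to compute, for each iterate $T_n^{\circ j}=T_{n^j}$, the number of lifts of $a_0$ modulo $p^2$ that it fixes. We then read off exact periods from these counts.

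\textbf{Step 1: only multiples of $t$ matter.} A lift $x$ of $a_0$ with $T_n^{\circ j}(x)\equiv x\pmod{p^2}$ reduces to a point fixed by $T_n^{\circ j}$ modulo $p$. Hence $t\mid j$, and every exact period above $a_0$ is a multiple of $t$.

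\textbf{Step 2: count the fixed lifts of each relevant iterate.} For $j$ with $t\mid j$, apply Lemma~\ref{lem:local-first-lift} with $m=n^j\ge2$; the point $a_0$ is fixed by $T_{m}$ by Proposition~\ref{thm:period}. That lemma gives the count:
\[
 \#\{\text{lifts fixed by }T_n^{\circ j}\}=
 \begin{cases}
  p,&a_0\text{ degenerate for }T_{n^j},\\
  1,&\text{otherwise}.
 \end{cases}
\]
Lemma~\ref{lem:iterate-degen-sign} says the degenerate case occurs exactly when $\kappa\mid j$. So for $t\mid j$ the count is $p$ if $\kappa\mid j$ and $1$ otherwise. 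In particular $T_n^{\circ\kappa}$ fixes all $p$ lifts. Thus every lift is periodic, and every exact period divides $\kappa$.

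\textbf{Step 3: identify the exact periods.} Each exact period $r$ satisfies $t\mid r\mid\kappa$.
\begin{enumerate}[label=\textup{(\alph*)},nosep]
\item If $\kappa=t$, then $r=t$ for every lift.
\item If $\kappa>t$, then $T_n^{\circ t}$ fixes exactly one lift, which therefore has period $t$. Every other lift $x$ has some period $r$ with $t\mid r\mid\kappa$ and $r\ne t$. If $r<\kappa$, then $\kappa\nmid r$, so by Step~2 the iterate $T_n^{\circ r}$ fixes exactly one lift. That lift must be the period-$t$ lift, since $t\mid r$, so it cannot be $x$. This contradiction gives $r=\kappa$, and there are $p-1$ such lifts.
\end{enumerate}

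The main obstacle is Step~2. The fixed-lift count modulo $p^2$ has to be valid for the iterate degree $n^j$, and the degeneracy criterion for iterates has to be exact. Both are supplied by the earlier lemmas, stated for arbitrary degree $m$. The remaining work is the bookkeeping in Step~3, where the single nondegenerate lift is identified with the period-$t$ point.
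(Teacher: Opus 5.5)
Your proof is correct, but it extracts the periods by a different mechanism from the paper's.

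The paper linearizes once. It takes the exact fixed point $\alpha\in\Z_p$ of the return map $f=T_{n^t}$ from Lemma~\ref{lem:local-first-lift} and writes the lifts as $\alpha+pu$. Taylor expansion then gives $f(\alpha+pu)\equiv\alpha+p\lambda u\pmod{p^2}$, so the return map acts on the fibre as multiplication by $\lambda$ on $\Fp$. This action has the single fixed point $u=0$, and all its other orbits have length $\operatorname{ord}_{\Fp^\times}(\lambda)=\kappa/t$; multiplying by $t$ gives the periods.

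You use only a coarser fact: each iterate $T_{n^j}$ with $t\mid j$ fixes either one lift or all $p$, according to whether $\kappa\mid j$. You then locate each exact period $r$ through $t\mid r\mid\kappa$. The key observation is that when $\kappa\nmid r$, the unique fixed lift of $T_n^{\circ r}$ must be the period-$t$ point.

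The underlying input is the same. Lemma~\ref{lem:iterate-degen-sign} is just the chain-rule identity $(T_n^{\circ tu})'(a_0)=\lambda^u$ combined with Proposition~\ref{prop:return-multiplier}.

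The paper's route needs one Taylor expansion and exhibits the actual dynamics of the return map on the fibre, not only the multiset of periods. Yours needs fixed-lift counts for all relevant iterates but no explicit local model. It is essentially the fixed-set-counting argument the paper uses at higher levels in Theorem~\ref{thm:explicit-period-tower}.
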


\begin{proof}
Set $f=T_n^{\circ t}=T_{n^t}$ and $\lambda=f'(a_0)$. Proposition~\ref{prop:return-multiplier} gives
\[
 L:=\operatorname{ord}_{\Fp^\times}(\lambda)=\kappa/t.
\]
Lemma~\ref{lem:local-first-lift}, applied with degree $n^t$, supplies an exact fixed point $\alpha\in\Z_p$ of $f$ reducing to~$a_0$. Write the $p$ lifts as $\alpha+pu$ modulo $p^2$, with $u\in\Fp$. Taylor expansion gives
\[
 f(\alpha+pu)\equiv\alpha+p\lambda u\pmod{p^2}.
\]
Thus the return map acts on the coordinate $u$ by multiplication by~$\lambda$. Since $\lambda\ne0$, every coordinate is periodic. If $L=1$, all coordinates are fixed. If $L>1$, only $u=0$ is fixed, and every nonzero coordinate has exact period~$L$ under this action.

Any iterate of $T_n$ fixing a lift must have exponent divisible by $t$, because its reduction fixes $a_0$. Hence the exact period of a lift under $T_n$ is $t$ times its period under the return map. The asserted periods and multiplicities follow.
\end{proof}

The theorem counts points above one residue. To count cycles, we consider the lifts of an entire cycle modulo~$p$.

\begin{corollary}[First-lift cycle counts]\label{cor:first-period-cycle-count}
Assume $p\nmid n$. Let $\mathcal C$ be a cycle of $T_n$ on $\Fp$ of length $t$, and let $e$ be the source order of one of its points. Put $\kappa=\cord_{ep}(n)$ and $L=\kappa/t$. The residue classes modulo $p^2$ that reduce to points of $\mathcal C$ form
\begin{enumerate}[label=\textup{(\alph*)},nosep]
\item $p$ cycles of length $t$ if $L=1$;
\item one cycle of length $t$ and $(p-1)/L$ cycles of length $\kappa$ if $L>1$.
\end{enumerate}
\end{corollary}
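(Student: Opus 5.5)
The plan is to assemble the corollary from Theorem~\ref{thm:first-period-lift} applied at each point of $\mathcal C$, and then divide point counts by cycle lengths.

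\emph{Step 1: the data are constant along $\mathcal C$.} If $a_0\in\mathcal C$ has source parameter $\zeta_0$, then $T_n(a_0)$ has source parameter $\zeta_0^n$. Since $\gcd(n,e)=1$, $\zeta_0^n$ has the same order $e$. Hence every point of $\mathcal C$ has source order $e$, and the quantities $t=\cord_e(n)$ and $\kappa=\cord_{ep}(n)$ are the same at every point. The set of classes modulo $p^2$ reducing into $\mathcal C$ has $tp$ elements. Because reduction commutes with $T_n$, this set is invariant under $T_n$. By Theorem~\ref{thm:first-period-lift}, every element is periodic, so the set is a union of cycles.

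\emph{Step 2: case $L=1$.} By part~(a) of the theorem, all $tp$ points have exact period $t$. They therefore form $tp/t=p$ cycles of length $t$.

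\emph{Step 3: case $L>1$.} By part~(b), above each of the $t$ points of $\mathcal C$ there is exactly one lift of period $t$, and the remaining $p-1$ lifts have period $\kappa$. Counting gives $t$ points of period $t$ and $t(p-1)$ points of period $\kappa$. A cycle of length $t$ meets each fiber at most once, since its reduction has period $t$; thus the $t$ period-$t$ points form exactly one cycle. The period-$\kappa$ points form
\[
t(p-1)/\kappa=(p-1)/L
\]
cycles. This number is an integer because $L$ divides $p-1$ by Proposition~\ref{prop:return-multiplier}.

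\emph{Main check.} The only step needing care is Step~1: one must confirm that $\kappa$ is well defined for the whole cycle, independent of which point's source order is used. The argument $\operatorname{ord}(\zeta_0^n)=e$ under coprimality settles this. It also covers the boundary case $e\le2$, where $\mathcal C$ is a single fixed point $\pm1$.
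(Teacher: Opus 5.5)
Your proposal is correct and follows essentially the same route as the paper's proof: the source order is constant along $\mathcal C$ via $\zeta\mapsto\zeta^n$ with $\gcd(n,e)=1$, Theorem~\ref{thm:first-period-lift} is applied at each of the $t$ points, and the point counts are divided by the periods, with integrality from $L\mid p-1$. Your additional remarks, that the lifted set is $T_n$-invariant and that a period-$t$ cycle meets each fiber once, are valid details that the paper leaves implicit.
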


\begin{proof}
If $\zeta$ is a source parameter of one point of $\mathcal C$, the successive parameters may be chosen as $\zeta,\zeta^n,\ldots,\zeta^{n^{t-1}}$. Since $\gcd(n,e)=1$, they all have order $e$. Theorem~\ref{thm:first-period-lift} therefore gives the same point multiplicities above each of the $t$ residues.

For $L=1$, the $pt$ lifted points all have period $t$, giving $p$ cycles. For $L>1$, there are $t$ lifted points of period $t$ and $(p-1)t$ of period $\kappa=tL$. Dividing by the respective periods gives the stated cycle counts. Their integrality also follows from $L\mid p-1$ in Proposition~\ref{prop:return-multiplier}.
\end{proof}

\begin{samepage}
\begin{example}[An even-degree example]
Take $T_2(x)=2x^2-1$ and $p=7$. The fixed residue $a_0=3$ has source parameters $2,4\in\mathbb F_7^\times$, both of order $e=3$. Thus
\[
 t=\cord_3(2)=1,\qquad
 \kappa=\cord_{21}(2)=6.
\]
The return multiplier is $T_2'(3)=12\equiv5\pmod7$, of order~$6$. Among the seven lifts of $3$ modulo $49$, the unique fixed lift is $24$, and the other six form the cycle
\[
 3\longmapsto17\longmapsto38\longmapsto45
  \longmapsto31\longmapsto10\longmapsto3.
\]
This gives one cycle of length $1$ and one of length $6$, as predicted by the corollary.
\end{example}
\end{samepage}

\section{Exact periods at higher prime powers}\label{sec:higher-level}

We now determine the exact periods of lifts modulo $p^k$ and count the points and cycles of each period. When $p\nmid n$, the growth of the signed orders $\cord_{ep^q}(n)$ gives these counts above every periodic residue, except at the boundary points for $p=3$. We treat those points separately using the boundary fixed-point count from Section~\ref{sec:lifting}. When $p\mid n$, each periodic residue has a unique periodic lift, with the same period. These explicit counts refine the general cycle-lifting alternatives in Nara~\cite[Corollary~4.12]{Nara2025}.

\subsection{Growth of signed orders}

\begin{proposition}[Growth of signed orders]
\label{prop:signed-order-growth}
Let $p$ be an odd prime, let $n\ge2$ with $p\nmid n$, and let $e\ge1$ satisfy
\[
  \gcd(n,e)=1,\qquad e\mid p-1\ \text{or}\ e\mid p+1.
\]
Put
\[
  c_q=\cord_{ep^q}(n)\quad(q\ge0),\qquad
  R=\cord_p(n)=\operatorname{ord}_p(n^2),\qquad
  w=\nup(n^{2R}-1).
\]
Then $w\ge1$, $p\nmid c_1$, and
\begin{equation}\label{eq:signed-order-growth}
  c_q=
  \begin{cases}
    c_0,&q=0,\\
    c_1,&1\le q\le w,\\
    c_1p^{q-w},&q>w.
  \end{cases}
\end{equation}
\end{proposition}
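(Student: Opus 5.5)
The plan is to reduce everything to orders of $n^2$ and use the lifting-the-exponent identity~\eqref{eq:LTE}. First, $w\ge1$ is immediate: $n^{2R}\equiv1\pmod p$ by the definition of $R$ and Lemma~\ref{lem:cord-vs-ord}. The case $q=0$ is the definition of $c_0$.

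For $q\ge1$, I first describe $c_q$ through a sign-matching criterion, extending Lemma~\ref{lem:first-lift-sign} from modulus $ep$ to $ep^q$. Since $\gcd(e,p)=1$, the Chinese remainder theorem and Lemma~\ref{lem:cord-vs-ord} give the following for $r\ge1$.
\begin{itemize}[nosep]
\item If $e\ge3$: $c_q\mid r$ if and only if $t\mid r$ and $n^r\equiv\sigma^{r/t}\pmod{p^q}$, where $t=c_0$ and $n^t\equiv\sigma\pmod e$.
\item If $e\le2$: $c_q\mid r$ if and only if $n^r\equiv\pm1\pmod{p^q}$.
\end{itemize}
In $(\Z/p^q\Z)^\times$, which is cyclic with a unique element of order $2$, the condition $n^r\equiv\pm1$ is equivalent to $n^{2r}\equiv1$. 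Hence, in all cases, $c_q\mid r$ implies $\operatorname{ord}_{p^q}(n^2)\mid r$. In the other direction, $\operatorname{lcm}(t,\operatorname{ord}_{p^q}(n^2))$ forces $n^r\equiv\pm1$ with some sign, though not necessarily the correct one.

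Next I compute $\operatorname{ord}_{p^q}(n^2)$ with~\eqref{eq:LTE}. Put $v=n^{2R}\in1+p\Z_p$, where $R=\operatorname{ord}_p(n^2)$ and $p\nmid R$ because $R\mid p-1$. Then $\nu_p(v^j-1)=w+\nu_p(j)$, provided $v\ne1$. (If $v=1$, which happens only for $n=1$, we have $w=\infty$; this cannot occur for $n\ge2$.) So $\operatorname{ord}_{p^q}(n^2)$ equals $R$ for $q\le w$ and $Rp^{q-w}$ for $q>w$.

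To pin down $c_q$, I compare it with $c_1$. By Lemma~\ref{lem:cord-vs-ord}, $c_1\mid c_q$. Reducing the defining congruence modulo $p$ shows $R\mid c_1$. I then show $p\nmid c_1$, via Proposition~\ref{prop:return-multiplier}: $c_1/c_0$ divides $p-1$, and $c_0\mid e'$-type quantities dividing $\varphi(e)$, with $e\mid p\pm1$. More precisely, $c_0\mid\operatorname{ord}_e(n)\mid\varphi(e)$, which is coprime to $p$ because $e<p+2$ and $p\nmid e$. This step needs care: $\varphi(e)\le p$, and $\varphi(e)=p$ is impossible since $\varphi(e)$ is even for $e\ge3$ while $p$ is odd.

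Now take $q\le w$. Write $c_1=Rm$ with $n^{c_1}\equiv\sigma'\pmod p$, where $\sigma'$ is the required sign. Then $(n^{c_1})^2=v^m\equiv1\pmod{p^w}$, so $n^{c_1}\equiv\pm1\pmod{p^w}$, and the sign is inherited from the sign modulo $p$. This gives $c_q\mid c_1$, hence $c_q=c_1$.

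For $q>w$, I claim $c_q=c_1p^{q-w}$. For the upper bound, $r=c_1p^{q-w}$ satisfies $n^{2r}=v^{mp^{q-w}}\equiv1\pmod{p^q}$ by LTE, so $n^r\equiv\pm1\pmod{p^q}$. The sign agrees with $(\sigma')^{p^{q-w}}=\sigma'$ modulo $p$, and $\sigma^{r/t}=\sigma^{c_1/t}$ because $p$ is odd. For the lower bound, $\operatorname{ord}_{p^q}(n^2)=Rp^{q-w}$ divides $c_q$, and $c_1\mid c_q$ with $p\nmid c_1$, so $c_q$ is divisible by $\operatorname{lcm}(c_1,Rp^{q-w})=c_1p^{q-w}$.

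The main obstacle is this sign bookkeeping, that is, making sure that raising to the $p$-th power preserves the correct sign. It is handled by the oddness of $p$.
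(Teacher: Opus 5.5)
Your proof is correct. It uses the same ingredients as the paper: the CRT sign-matching criterion, the lifting-the-exponent identity~\eqref{eq:LTE}, and the fact that $p\nmid c_1$. The case $q\le w$ is handled essentially as in the paper. The difference lies in how you get the lower bound for $q>w$.

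The paper applies~\eqref{eq:LTE} to $\sigma n^{c_1}$ and obtains the single signed identity $\nup(n^{c_1p^a}-\sigma)=w+a$. Both bounds come from it: the upper bound directly, and the lower bound by writing $c_q=c_1p^a$ and checking that the sign is still forced to be $\sigma$. This requires the exact valuation $\nup(n^{c_1}-\sigma)=w$, and hence uses that $n^{c_1}+\sigma$ is a unit.

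You instead get the lower bound with no sign bookkeeping, because squaring removes the sign. The exact order $\operatorname{ord}_{p^q}(n^2)=Rp^{q-w}$ divides $c_q$. Together with $c_1\mid c_q$ and $p\nmid c_1$, an lcm argument gives $c_1p^{q-w}\mid c_q$. In your version, the sign and the oddness of $p$ (through $\sigma^{p^{q-w}}=\sigma$) matter only for the upper bound. That is a clean separation of roles.

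Two minor points on the proof of $p\nmid c_1$:
\begin{itemize}
\item The detour through Proposition~\ref{prop:return-multiplier} is valid, but it tacitly needs a periodic residue of source order $e$. Such a residue exists, since the source groups are cyclic of orders $p-1$ and $p+1$ and $\gcd(n,e)=1$. Alternatively, take $u=p-1$ in Lemma~\ref{lem:first-lift-sign} to get $c_1\mid c_0(p-1)$ directly.
\item The paper's route is shorter: $c_1\mid\operatorname{ord}_{ep}(n)\mid\varphi(e)(p-1)$, combined with $\varphi(e)<p$.
\end{itemize}
The garbled phrase about ``$e'$-type quantities'' and the appeal to $p\nmid e$ play no role. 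What the argument actually needs is $\varphi(e)<p$, and you do supply that correctly via the parity of $\varphi(e)$.
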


\begin{proof}
Lemma~\ref{lem:cord-vs-ord} gives $c_q\mid c_{q+1}$ and $R\mid c_1$. Also, $w\ge1$ because $p\mid n^{2R}-1$. The condition on $e$ implies $\varphi(e)<p$: this is immediate if $e<p$, while the only remaining possibility is $e=p+1$, for which $\varphi(e)\le(p+1)/2<p$. Since $\gcd(e,p)=1$,
\[
  c_1\mid\operatorname{ord}_{ep}(n)\mid\varphi(e)(p-1),
\]
so $p\nmid c_1$. Write $c_1=Rh$. The lifting-the-exponent identity~\eqref{eq:LTE}, with $p\nmid h$, gives
\[
  \nup(n^{2c_1}-1)
  =\nup\bigl((n^{2R})^h-1\bigr)=w.
\]
Choose $\sigma\in\{\pm1\}$ with $n^{c_1}\equiv\sigma\pmod{ep}$. Since $p$ is odd, $n^{c_1}+\sigma$ is a $p$-adic unit, and hence $\nup(n^{c_1}-\sigma)=w$. Applying~\eqref{eq:LTE} to $\sigma n^{c_1}$ yields
\begin{equation}\label{eq:signed-LTE}
  \nup(n^{c_1p^a}-\sigma)=w+a
  \qquad(a\ge0),
\end{equation}
where the sign remains $\sigma$ because $p^a$ is odd.

For $1\le q\le w$, we have $n^{c_1}\equiv\sigma\pmod{ep^q}$, so $c_q\mid c_1$. Together with $c_1\mid c_q$, this gives $c_q=c_1$.
For $q>w$, equation~\eqref{eq:signed-LTE} shows that
\[
  n^{c_1p^{q-w}}\equiv\sigma\pmod{ep^q},
  \qquad c_1\mid c_q\mid c_1p^{q-w}.
\]
Thus $c_q=c_1p^a$ for some $0\le a\le q-w$. The sign in $n^{c_q}\equiv\pm1\pmod{ep^q}$ must be $\sigma$, as is seen by reducing modulo $p$. Equation~\eqref{eq:signed-LTE} then gives $w+a\ge q$, so $a=q-w$.
\end{proof}

Coprimality of $e$ and $p$ alone is insufficient for this formula. For $p=3$, $e=7$, and $n=2$, one has $c_0=3$, $c_1=c_2=6$, and $w=1$, whereas~\eqref{eq:signed-order-growth} would give $c_2=18$.

\subsection{Period counts away from the boundary at \texorpdfstring{$p=3$}{p=3}}

\begin{theorem}[Exact periods of lifts]
\label{thm:explicit-period-tower}
\begin{samepage}
Let $p$ be an odd prime and $n\ge2$ with $p\nmid n$. Let $a_0\in\Fp$ be periodic with source order $e$, and let $k\ge1$. Assume either $p\ge5$ or $a_0\notin\{\pm1\}$. Put
\[
  t=\cord_e(n),\qquad \kappa=\cord_{ep}(n),\qquad
  R=\cord_p(n),\qquad
  w=\nup(n^{2R}-1),\qquad
  m=\min(k-1,w).
\]
Every lift of $a_0$ modulo $p^k$ is periodic, and the exact periods and their multiplicities are as follows:
\[
\begin{array}{c c c}
\toprule
  \text{Case}&\text{Exact period}&\text{Number of lifts}\\ \midrule
  \kappa=t&t&p^m\\
           &tp^i&(p-1)p^{w+i-1}\\ \midrule
  \kappa>t&t&1\\
           &\kappa&p^m-1\\
           &\kappa p^i&(p-1)p^{w+i-1}\\
\bottomrule
\end{array}
\]
Here $1\le i\le k-1-w$; empty ranges and entries with zero multiplicity are omitted.
\end{samepage}
\end{theorem}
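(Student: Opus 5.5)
The plan is to count, for each $j\ge1$, the lifts of $a_0$ modulo $p^k$ fixed by $T_n^{\circ j}=T_{n^j}$. The fixed sets of the candidate periods form a divisibility chain, so successive differences give the exact-period counts.

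\emph{Step 1: which iterates fix a lift, and how many lifts they fix.} Any iterate fixing a lift reduces to one fixing $a_0$, so only $j$ with $t\mid j$ matter. For such $j$, $a_0$ is a fixed point of $T_{n^j}$, and by Lemma~\ref{lem:iterate-degen-sign} it is degenerate exactly when $\kappa\mid j$. The hypothesis ($p\ge5$ or $a_0\notin\{\pm1\}$) is what lets us use Lemma~\ref{lem:local-form} with degree $n^j$ in its generic form, avoiding the ternary boundary case $B_k$. The number of lifts fixed by $T_{n^j}$ is then $1$ if $\kappa\nmid j$, and $p^{\min(k-1,s_j)}$ if $\kappa\mid j$, where $s_j=\nu_p(n^{2j}-1)$.

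\emph{Step 2: evaluating $s_j$.} Since $R\mid\kappa\mid j$, write $j=Rh$. The identity \eqref{eq:LTE} applied to $v=n^{2R}$ gives $s_j=w+\nu_p(h)$. Proposition~\ref{prop:signed-order-growth} gives $p\nmid c_1=\kappa$, so $\nu_p(h)=\nu_p(j/\kappa)$. Hence for $\kappa\mid j$ the count is $p^{\min(k-1,\,w+\nu_p(j/\kappa))}$.

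\emph{Step 3: periodicity of every lift.} Take $j=\kappa p^{\max(0,k-1-w)}$. Its count is $p^{k-1}$, so every lift is fixed by $T_n^{\circ j}$ and is therefore periodic.

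\emph{Step 4: exact periods.} A lift of exact period $r$ satisfies $t\mid r$. Write $\mathrm{Fix}_r$ for the number of lifts fixed by $T_n^{\circ r}$, which depends only on whether $\kappa\mid r$ and, if so, on $\nu_p(r/\kappa)$. The exact period is the least $r$ with the lift fixed. If $\kappa\nmid r$, the fixed set is only the single Hensel lift, which is already fixed at $r=t$. So the exact periods lie in $\{t\}\cup\{\kappa p^i\}$. I will make this minimality rigorous using $\gcd$: the set of $r$ fixing a given lift is closed under $\gcd$, because it is the set of multiples of the exact period.

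\emph{Step 5: the multiplicities.} Subtract successive counts along the chain $t\mid\kappa\mid\kappa p\mid\cdots$.

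If $\kappa>t$: period $t$ has $1$ lift; period $\kappa$ has $p^{m}-1$; period $\kappa p^i$ has $p^{\min(k-1,w+i)}-p^{\min(k-1,w+i-1)}$. The last is $(p-1)p^{w+i-1}$ for $1\le i\le k-1-w$ and zero beyond.

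If $\kappa=t$: the chain starts at $p^m$ for period $t$, and periods $tp^i$ follow in the same way.

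\emph{Main obstacle.} The delicate step is Step 1 for iterates. Lemma~\ref{lem:local-form} is stated with $s=\nu_p(n^2-1)$ for degree $n$, and I must apply it to degree $n^j$, where $s$ becomes $\nu_p(n^{2j}-1)$. This requires checking that the degeneracy of $a_0$ for $T_{n^j}$ matches Lemma~\ref{lem:iterate-degen-sign}. It also requires confirming that a boundary $a_0$ with $p\ge5$ is handled by Lemma~\ref{lem:local-boundary} for every degree $n^j$.
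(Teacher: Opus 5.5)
Your proposal is correct and follows essentially the same route as the paper. Both arguments count the lifts fixed by each iterate $T_{n^j}$ with $t\mid j$ using Lemma~\ref{lem:iterate-degen-sign} and Lemma~\ref{lem:local-form} applied with degree $n^j$, show that every lift is periodic, confine the exact periods to $t$ and $\kappa p^i$, and take successive differences along the divisibility chain. The differences are only in bookkeeping. You evaluate $\nup(n^{2j}-1)=w+\nup(j/\kappa)$ directly from \eqref{eq:LTE}, and you pin down the exact period by making it divide the all-fixing iterate $\kappa p^{\max(0,k-1-w)}$. The paper instead works with the chain $c_q=\cord_{ep^q}(n)$ and Proposition~\ref{prop:signed-order-growth}, and shows that the fixed set of $T_n^{\circ c_h}$ is contained in that of $T_n^{\circ j}$ and has the same size.
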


\begin{proof}
The case $k=1$ is immediate. Assume $k\ge2$, and write $c_q=\cord_{ep^q}(n)$. For each positive multiple $j$ of $t$, let $A_j$ be the set of lifts of $a_0$ modulo $p^k$ fixed by $T_n^{\circ j}=T_{n^j}$, and put
\[
  q_k(j)=\max\{q\in\{0,\ldots,k-1\}:c_q\mid j\}.
\]
We first show that $|A_j|=p^{q_k(j)}$.

By Lemma~\ref{lem:iterate-degen-sign}, $a_0$ is a degenerate fixed point of $T_{n^j}$ if and only if $\kappa\mid j$, or equivalently, $q_k(j)\ge1$. If $q_k(j)=0$, Hensel's lemma gives $|A_j|=1$. Otherwise, choose the unique sign $\sigma\in\{\pm1\}$ for which $n^j\equiv\sigma\pmod{ep}$. For $1\le q\le k-1$, Lemma~\ref{lem:cord-vs-ord} gives
\[
  c_q\mid j
  \iff n^j\equiv\sigma\pmod{ep^q}
  \iff q\le\nup(n^j-\sigma).
\]
Since $n^j+\sigma$ is a $p$-adic unit,
\[
  q_k(j)=\min\{k-1,\nup(n^{2j}-1)\}.
\]
The applicable degenerate case of Lemma~\ref{lem:local-form}, with degree $n^j$, therefore gives $|A_j|=p^{q_k(j)}$, as claimed.

We next show that the values $c_0,\ldots,c_{k-1}$ include every possible exact period. Since $q_k(c_{k-1})=k-1$, the iterate $T_n^{\circ c_{k-1}}$ fixes all $p^{k-1}$ lifts, so every lift is periodic. Let a lift have exact period $j$. Reduction modulo $p$ gives $t\mid j$. Set $h=q_k(j)$. Then $c_h\mid j$ and $q_k(c_h)=h$: if $c_q\mid c_h$ for some $q>h$, then $c_q\mid j$, contrary to the definition of $h$. Consequently,
\[
  A_{c_h}\subseteq A_j,\qquad |A_{c_h}|=|A_j|=p^h,
\]
and these sets are equal. The lift is therefore fixed by $T_n^{\circ c_h}$, so $j\mid c_h$. Hence $j=c_h$.

It remains to count the points of each exact period. Proposition~\ref{prop:signed-order-growth} applies because the source order satisfies $\gcd(n,e)=1$ and $e\mid p-1$ or $e\mid p+1$. It gives
\[
  c_0=t,\qquad c_q=\kappa\quad(1\le q\le m),\qquad
  c_{w+i}=\kappa p^i\quad(1\le i\le k-1-w).
\]
The fixed sets for the distinct values in this divisibility chain are nested. If $\kappa=t$, there are $p^m$ points fixed by $T_n^{\circ t}$. If $\kappa>t$, there is one such point, and $p^m-1$ further points have exact period $\kappa$. For each later value, subtracting the preceding fixed-point count gives
\[
  p^{w+i}-p^{w+i-1}=(p-1)p^{w+i-1}.
\]
Because no other exact periods occur, these differences give the stated multiplicities.
\end{proof}

Continuing the example at the end of Section~\ref{sec:period-Fp}, take $p=7$, $n=2$, and $a_0=3$. Here $e=3$, $t=1$, $\kappa=6$, $R=3$, and $w=1$. At level $k=4$, the $343$ lifts of $a_0$ split into $1$, $6$, $42$, and $294$ points of exact periods $1$, $6$, $42$, and $294$, respectively.

\begin{corollary}[Cycle counts above a base cycle]
\label{cor:cycle-count-tower}
Under the hypotheses of Theorem~\ref{thm:explicit-period-tower}, consider the full base cycle of length $t$, and put $L=\kappa/t$. Its lifts consist of the following cycles. If $L=1$, there are $p^m$ cycles of length $t$. If $L>1$, there is one cycle of length $t$, together with $(p^m-1)/L$ cycles of length $\kappa$. In either case, for each $1\le i\le k-1-w$, there are also
\[
  \frac{(p-1)p^{w-1}}{L}
  \quad\text{cycles of length }\kappa p^i.
\]
\end{corollary}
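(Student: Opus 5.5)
The plan is to mirror the proof of Corollary~\ref{cor:first-period-cycle-count}: first show that every residue of the base cycle carries the same point distribution, then divide total point counts by cycle lengths.

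\emph{Step 1: uniform source order along the cycle.} Let $\mathcal C=\{a_0,T_n(a_0),\ldots\}$ have length $t$, and let $\zeta$ be a source parameter of $a_0$ of order $e$. The points of $\mathcal C$ have parameters $\zeta^{n^j}$ with $0\le j<t$. Since $\gcd(n,e)=1$, each of these parameters again has order $e$. Moreover, each point of $\mathcal C$ is boundary exactly when $a_0$ is: boundary means $e\le2$, and all points share the order $e$. Hence the hypotheses of Theorem~\ref{thm:explicit-period-tower} (either $p\ge5$ or the point is nonboundary) hold at every point of $\mathcal C$. The quantities $t,\kappa,R,w,m$ depend only on $e$, $n$, $p$ and $k$, so the period table of Theorem~\ref{thm:explicit-period-tower} is the same above each of the $t$ residues.

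\emph{Step 2: total point counts over the cycle.} Multiplying the table by $t$, the lifts of $\mathcal C$ modulo $p^k$ consist of the following points.
\begin{itemize}[nosep]
\item If $L=1$: $tp^m$ points of period $t$.
\item If $L>1$: $t$ points of period $t$ and $t(p^m-1)$ points of period $\kappa$.
\item In either case, for each $1\le i\le k-1-w$: $t(p-1)p^{w+i-1}$ points of period $\kappa p^i$, where $\kappa=tL$ (when $L=1$ this reads $tp^i=\kappa p^i$).
\end{itemize}
The set of lifts of $\mathcal C$ is invariant under $T_n$, since reduction commutes with $T_n$. Every cycle of length $r$ therefore contains exactly $r$ of these points, and the number of cycles of period $r$ equals (number of points of period $r$)$/r$.

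\emph{Step 3: division.} The divisions give the counts in the statement:
\[
tp^m/t=p^m,\qquad t/t=1,\qquad t(p^m-1)/(tL)=(p^m-1)/L,
\]
\[
t(p-1)p^{w+i-1}/(tLp^i)=(p-1)p^{w-1}/L.
\]

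\emph{Main obstacle: integrality.} I do not expect a real obstacle, because integrality is automatic once the counts are known to be cycle counts. As a consistency check, $L\mid p-1$ by Proposition~\ref{prop:return-multiplier}, so $(p-1)p^{w-1}/L$ is an integer. Also $p^m-1$ is divisible by $p-1$ when $m\ge1$. If $m=0$, that count is $0$, matching the omitted entries.

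The only point needing care is the case $L>1$, $m=0$ (that is, $k=1$). The statement still reads correctly there, since $(p^0-1)/L=0$ and the range of $i$ is empty.
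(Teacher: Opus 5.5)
Your proposal is correct and matches the paper's proof: constancy of the source order along the base cycle makes the table of Theorem~\ref{thm:explicit-period-tower} identical above each of the $t$ points. Multiplying by $t$ and dividing by the exact periods then gives the cycle counts, with $L\mid p-1$ from Proposition~\ref{prop:return-multiplier} confirming integrality, and your extra checks (the hypothesis holding at every point of the cycle, and integrality being automatic for an invariant set) are valid details the paper leaves implicit.
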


\begin{proof}
The source order is constant along the base cycle, as observed in Corollary~\ref{cor:first-period-cycle-count}. Thus the point counts in Theorem~\ref{thm:explicit-period-tower} are the same above each of its $t$ points. Multiplying each count by $t$ and dividing by the corresponding exact period gives the number of cycles. Proposition~\ref{prop:return-multiplier} gives $L\mid p-1$, so the displayed quotients are integers.
\end{proof}

Under the permutation hypothesis $\gcd(n,p^3-p)=1$ of Li--Lu--Tan--Chen~\cite{LLTC2025}, with $p>3$, their parameters correspond to
\[
  N_s=t,\qquad l_s=L=\kappa/t,\qquad
  w=\nup(n^{2\operatorname{ord}_p(n^2)}-1).
\]
The identification of $l_s$, the order of the return multiplier, follows from Proposition~\ref{prop:return-multiplier}. With these substitutions, Corollary~\ref{cor:cycle-count-tower} agrees with their Proposition~7. When $L=1$, their two initial families have the same period and together give $p^m$ cycles of length $t$.

At $p=3$, the sole nonboundary residue is $a_0=0$, with source order $4$. Under the standing assumption $3\nmid n$, it is periodic exactly when $n$ is odd. In that case $t=R=1$, $w=\nu_3(n^2-1)$, and Corollary~\ref{cor:cycle-count-tower} gives the cycle counts in Lu--Dai--Li~\cite[Proposition~4]{LuDaiLi2026}, with their $l_0=\operatorname{ord}_{\Fp^\times}(T_n'(0))$ equal to $L$.

\subsection{Boundary periods for \texorpdfstring{$p=3$}{p=3}}

For $\varepsilon\in\{\pm1\}$, write
\[
  D_{\varepsilon,k}
  =\{x\in\Z/3^k\Z:x\equiv\varepsilon\pmod3\}.
\]

\begin{theorem}[Exact periods in a boundary disk]
\label{thm:ternary-boundary-periods}
Let $n\ge2$, $3\nmid n$, and $k\ge1$. Let $\varepsilon\in\{\pm1\}$ satisfy $T_n(\varepsilon)=\varepsilon$, and put $s=\nu_3(n^2-1)$. If $k\le s+1$, every point of $D_{\varepsilon,k}$ is fixed.

If $k\ge s+2$, every point of $D_{\varepsilon,k}$ is periodic, and its exact period divides $3^{k-s-1}$. The multiplicities are
\[
\begin{array}{c c c}
\toprule
  \text{Exact period}&\text{Number of points}&\text{Range}\\ \midrule
  1&2\cdot3^s&\\
  3^q&4\cdot3^{s+q-1}&1\le q\le k-s-2\\
  3^{k-s-1}&3^{k-2}&\\
\bottomrule
\end{array}
\]
with the middle row omitted when its range is empty.
\end{theorem}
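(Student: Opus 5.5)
We count fixed points of the iterates on the disk and then take successive differences, as in the proof of Theorem~\ref{thm:explicit-period-tower}. For each $j\ge1$ the iterate $T_n^{\circ j}=T_{n^j}$ fixes $\varepsilon$. Indeed, $T_{n^j}(1)=1$, and when $\varepsilon=-1$ the degree $n$ is odd, so $n^j$ is odd as well. Since $3\nmid n^j$, Corollary~\ref{cor:ternary-boundary-count} applies with degree $n^j$ and with $s_j=\nu_3(n^{2j}-1)$ in place of $s$. It shows that $T_n^{\circ j}$ has exactly $B_k(s_j)$ fixed points in $D_{\varepsilon,k}$. The first step is to compute $s_j$. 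Here $3\nmid n$ gives $n^2\equiv1\pmod 3$, so $s\ge1$. The lifting-the-exponent identity~\eqref{eq:LTE}, applied with $v=n^2$, then gives
\[
 s_j=s+\nu_3(j).
\]

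We now read off the periods. If $k\le s+1$, then $B_k(s)=3^{k-1}=|D_{\varepsilon,k}|$, so every point is fixed. Suppose $k\ge s+2$. Take $j=3^q$ with $q=k-s-1$. Then $k\le s_j+1$, so $T_n^{\circ 3^{k-s-1}}$ fixes all $3^{k-1}$ points of the disk. Hence every point is periodic, and its exact period divides $3^{k-s-1}$. In particular every exact period is a power of $3$, say $3^q$ with $0\le q\le k-s-1$. The set of points whose period divides $3^q$ has size
\[
 B_k(s+q)=
 \begin{cases}
  2\cdot3^{s+q},&q\le k-s-2,\\
  3^{k-1},&q=k-s-1.
 \end{cases}
\]
Note that $B_k(s_j)$ depends only on $\nu_3(j)$, which confirms directly that only powers of $3$ can occur as exact periods.

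Taking successive differences of these nested counts gives the table. The fixed points number $2\cdot3^s$. For $1\le q\le k-s-2$, the points of exact period $3^q$ number
\[
 2\cdot3^{s+q}-2\cdot3^{s+q-1}=4\cdot3^{s+q-1}.
\]
The points of exact period $3^{k-s-1}$ number
\[
 3^{k-1}-2\cdot3^{k-2}=3^{k-2}.
\]
The edge case $k=s+2$ needs a separate check. There the middle range is empty, and the top count is $3^{s+1}-2\cdot3^s=3^s=3^{k-2}$, which is consistent with the table.

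The only point requiring care is that Corollary~\ref{cor:ternary-boundary-count} is applied with the degree $n^j$ rather than $n$. Its hypotheses are $3\nmid n^j$ and $T_{n^j}(\varepsilon)=\varepsilon$, and both were verified above. The rest of the argument is routine.
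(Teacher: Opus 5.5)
Your proposal is correct and follows the paper's proof essentially step for step. You apply Corollary~\ref{cor:ternary-boundary-count} to the iterates $T_{n^{j}}$ with $\nu_3(n^{2j}-1)=s+\nu_3(j)$ from \eqref{eq:LTE}, conclude that $T_n^{\circ 3^{k-s-1}}$ fixes the whole disk, and take successive differences of the nested fixed-point counts; your explicit check that $3\nmid n^j$ and $T_{n^j}(\varepsilon)=\varepsilon$ for every iterate is a detail the paper leaves implicit.
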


\begin{proof}
The disk $D_{\varepsilon,k}$ is invariant because $\varepsilon$ is fixed modulo $3$. The case $k\le s+1$ follows from Corollary~\ref{cor:ternary-boundary-count}. Now suppose $k\ge s+2$. The lifting-the-exponent identity~\eqref{eq:LTE} gives
\begin{equation}\label{eq:ternary-iterate-valuation}
  \nu_3(n^{2j}-1)=s+\nu_3(j)\qquad(j\ge1).
\end{equation}
Apply Corollary~\ref{cor:ternary-boundary-count} to $T_n^{\circ3^q}=T_{n^{3^q}}$. Its fixed-point count in the disk is
\[
  \bigl|\Fix(T_n^{\circ3^q},D_{\varepsilon,k})\bigr|
  =\begin{cases}
    2\cdot3^{s+q},&0\le q\le k-s-2,\\
    3^{k-1},&q\ge k-s-1.
  \end{cases}
\]
In particular, $T_n^{\circ3^{k-s-1}}$ fixes the whole disk. Thus every point is periodic and its period divides $3^{k-s-1}$.

There are $2\cdot3^s$ fixed points. For $1\le q\le k-s-2$, the number of points of exact period $3^q$ is the difference between the fixed-point counts for the iterates $3^q$ and $3^{q-1}$, namely
\[
  2\cdot3^{s+q}-2\cdot3^{s+q-1}=4\cdot3^{s+q-1}.
\]
The remaining points have exact period $3^{k-s-1}$, and their number is
\[
  3^{k-1}-2\cdot3^{k-2}=3^{k-2}.
\]
\end{proof}

For $k\ge s+2$, dividing these point counts by the corresponding periods gives $2\cdot3^s$ fixed points, $4\cdot3^{s-1}$ cycles of each length $3^q$ with $1\le q\le k-s-2$, and $3^{s-1}$ cycles of length $3^{k-s-1}$ in one boundary disk. For example, $T_2$ on the disk $x\equiv1\pmod3$ modulo $81$ has six fixed points, four cycles of length $3$, and one cycle of length $9$.

When $n$ is odd, both boundary points are fixed. For $k\ge s+2$, summing over their two disks gives $4\cdot3^s$ fixed points, $8\cdot3^{s-1}$ cycles of each length $3^q$ with $1\le q\le k-s-2$, and $2\cdot3^{s-1}$ cycles of length $3^{k-s-1}$. These agree with Lu--Dai--Li~\cite[Proposition~5]{LuDaiLi2026}, whose parameter $w$ is our $s$. When $n$ is even, only $\varepsilon=1$ satisfies $T_n(\varepsilon)=\varepsilon$, and the theorem applies to that disk.

\subsection{\texorpdfstring{The case $p\mid n$}{The case p divides n}}

When $p\mid n$, the derivative $T_n'=nU_{n-1}$ vanishes modulo $p$. The following result is the zero-multiplier case of Nara~\cite[Corollary~4.12(1)]{Nara2025}; here it has a direct proof by Hensel's lemma.

\begin{proposition}[Periodic points when $p\mid n$]
\label{prop:p-divides-n}
Let $p$ be an odd prime and let $n\ge2$ satisfy $p\mid n$. For every $k\ge1$, reduction modulo $p$ gives a bijection from the periodic points of $T_n$ on $\Z/p^k\Z$ to those on $\Fp$, preserving exact periods. Consequently,
\[
  P_r^{(k)}=P_r^{(1)}\qquad(k,r\ge1).
\]
\end{proposition}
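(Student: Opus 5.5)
The plan is to use the fact that every iterate has vanishing derivative modulo $p$, so every fixed point of every iterate lifts uniquely. Since $p\mid n$, the chain rule gives, for every $j\ge1$,
\[
 (T_n^{\circ j})'=(T_{n^j})'=n^jU_{n^j-1},
\]
and this vanishes modulo $p$. So if $a_0\in\Fp$ satisfies $T_{n^j}(a_0)=a_0$, then $(T_{n^j})'(a_0)=0\ne1$, and $a_0$ is a nondegenerate fixed point of $T_{n^j}$. Hensel's lemma (Lemma~\ref{thm:hensel}), applied to $T_{n^j}(x)-x$ in $\Z_p$, then gives exactly one fixed lift of $a_0$ modulo $p^k$ for each $k$. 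The same conclusion is available from the nondegenerate case of Lemma~\ref{lem:local-form} with degree $n^j$.

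The key steps, in order, are these.

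First, surjectivity with the correct period. Take $a_0$ periodic modulo $p$ with exact period $r$. Let $\alpha$ be the unique fixed lift of $a_0$ for $T_n^{\circ r}$ modulo $p^k$. Then $T_n(\alpha)$ is also fixed by $T_n^{\circ r}$ and reduces to $T_n(a_0)$. Hence the orbit of $\alpha$ consists of the unique fixed lifts of the points of the base cycle, and $\alpha$ is periodic. Its exact period is a multiple of $r$, because reduction is equivariant, and it divides $r$. So it equals $r$.

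Second, injectivity. Suppose $x$ is periodic modulo $p^k$ with exact period $j$. Then $x$ is a fixed lift of its reduction $x_0$ for $T_{n^j}$, and $x_0$ is fixed by $T_{n^j}$. Now let $y$ be another periodic lift of $x_0$, with exact period $j'$. Both $x$ and $y$ are fixed by $T_{n^{jj'}}$, and $x_0$ is a nondegenerate fixed point of that iterate, so uniqueness in Hensel's lemma forces $x=y$.

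Third, the counts. Since reduction is a period-preserving bijection on periodic points, $P_r^{(k)}=P_r^{(1)}$ for all $k,r\ge1$.

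I expect no serious obstacle. The one point needing care is that the derivative of every iterate vanishes modulo $p$, not just that of $T_n$. The composition identity makes this immediate, since $p\mid n^j$ gives $(T_{n^j})'=n^jU_{n^j-1}\equiv0\pmod p$. Injectivity must use a common multiple of the two periods, which is handled by passing to the iterate $jj'$ as above.
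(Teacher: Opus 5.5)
Your proof is correct and follows essentially the same route as the paper. Since $(T_{n^j})'=n^jU_{n^j-1}\equiv0\pmod p$ for every $j$, Hensel's lemma gives each iterate a unique fixed lift, and this yields both the period-preserving lift and its uniqueness. The only cosmetic difference is that for injectivity you pass to the common multiple $jj'$, whereas the paper uses $t\mid r$ to compare the two lifts directly under $T_n^{\circ r}$.
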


\begin{proof}
For every $j\ge1$, the polynomial $g_j(x)=T_{n^j}(x)-x$ satisfies
\[
  g_j'(x)=n^jU_{n^j-1}(x)-1\equiv-1\pmod p.
\]
Hensel's lemma therefore gives a unique lift modulo $p^k$ of each root of $g_j$ modulo $p$.

Let $a_0$ have exact period $t$ modulo $p$, and let $\alpha_k$ be its unique lift fixed by $T_n^{\circ t}$. Its exact period divides $t$ and is a multiple of $t$ by reduction, so it equals $t$. If another lift $x$ of $a_0$ is periodic with exact period $r$, then $t\mid r$. Both $x$ and $\alpha_k$ are fixed by $T_n^{\circ r}$, so uniqueness for $g_r$ gives $x=\alpha_k$. Thus each periodic residue has exactly one periodic lift, with the same period.
\end{proof}

\begin{corollary}[The case $n=p$]\label{cor:fermat-periodic}
Let $p$ be an odd prime and $k\ge1$. Every periodic point of $T_p$ modulo $p^k$ is fixed, and
\[
  P_1^{(k)}=p,\qquad P_r^{(k)}=0\quad(r>1).
\]
\end{corollary}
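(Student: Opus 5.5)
The plan is to combine Proposition~\ref{prop:p-divides-n} with Corollary~\ref{cor:fermat} and the periodicity criterion of Proposition~\ref{thm:period}. Since $p\mid p$, Proposition~\ref{prop:p-divides-n} applies with $n=p$. It shows that reduction modulo $p$ is a bijection from the periodic points of $T_p$ on $\Z/p^k\Z$ onto those on $\Fp$, and that this bijection preserves exact periods. Hence $P_r^{(k)}=P_r^{(1)}$ for all $r$, and the problem reduces entirely to the finite field.

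Over $\Fp$, Corollary~\ref{cor:fermat} shows that $T_p$ fixes every element of $\Fp$. So every point of $\Fp$ is periodic with exact period $1$, which gives $P_1^{(1)}=p$ and $P_r^{(1)}=0$ for $r>1$. Transferring these values through the bijection yields $P_1^{(k)}=p$ and $P_r^{(k)}=0$ for $r>1$. Because exact periods are preserved, every periodic point modulo $p^k$ has period $1$, that is, it is fixed.

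As a consistency check, we can compare with Theorem~\ref{thm:complete-fixed-count}. Since $s=\nu_p(p^2-1)=0$, we have $d=0$ and $\Delta=0$, so $N_k=N_1=p$. This agrees with the count of fixed points just obtained. Alternatively, Proposition~\ref{thm:period} gives the same conclusion directly: the source order $e$ divides $p\pm1$, so $p\equiv\pm1\pmod e$ and $\cord_e(p)=1$.

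There is no real obstacle here. The only point needing care is that Proposition~\ref{prop:p-divides-n} concerns periodic points, not all residues. Modulo $p^k$, most lifts are preperiodic but not periodic, so the statement correctly speaks only of periodic points, and the bijection handles exactly those.
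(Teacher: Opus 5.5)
Your proof is correct and follows the paper's own argument: Corollary~\ref{cor:fermat} shows that $T_p$ fixes every element of $\Fp$, and Proposition~\ref{prop:p-divides-n} transfers this to $\Z/p^k\Z$ while preserving exact periods. Your cross-check with Theorem~\ref{thm:complete-fixed-count} is also valid, since $s=0$ forces $d=0$ and $\Delta_{p,p,k}=0$, giving $N_k=p$; the paper does not need it.
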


\begin{proof}
By the Chebyshev--Fermat congruence in Corollary~\ref{cor:fermat}, $T_p$ fixes every element of $\Fp$. Apply Proposition~\ref{prop:p-divides-n}.
\end{proof}

\subsection{Global exact-period counts}

The local results describe the periods above each base cycle. For the total number of points of a prescribed exact period, M\"obius inversion gives a direct formula from Theorem~\ref{thm:complete-fixed-count}.

\begin{corollary}[M\"obius formula at arbitrary levels]
\label{cor:Prk-mobius}
For every odd prime $p$, $n\ge2$, $k\ge1$, and $r\ge1$,
\begin{equation}\label{eq:Prk-mobius}
  P_r^{(k)}
  =\sum_{j\mid r}\Mob(r/j)\,N_{k,p}(n^j).
\end{equation}
Here $N_{k,p}(m)$ is given by Theorem~\ref{thm:complete-fixed-count}, with the valuation $\nu_p(m^2-1)$, parity factor $\gcd(m-1,2)$, and boundary correction evaluated separately for each degree $m=n^j$.
\end{corollary}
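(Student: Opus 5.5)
The plan is the standard Möbius inversion argument applied to the fixed-point counts of iterates. First, for each $j\ge1$, I will identify the fixed points of the $j$th iterate on $\Z/p^k\Z$ with the fixed points of a single Chebyshev polynomial. The composition law gives $T_n^{\circ j}=T_{n^j}$ as polynomials in $\Z[x]$, so the same identity holds for the induced maps on $\Z/p^k\Z$. Hence
\[
  \bigl|\Fix(T_n^{\circ j},\Z/p^k\Z)\bigr|=N_{k,p}(n^j).
\]
Since $n^j\ge2$, Theorem~\ref{thm:complete-fixed-count} applies to the degree $m=n^j$. Its inputs are $\beta=\gcd(m-1,2)$, $s=\nu_p(m^2-1)$, and the correction $\Delta_{p,m,k}$, all recomputed for that degree. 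This justifies the final sentence of the statement.

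Second, I will decompose each fixed set by exact period. A point $x\in\Z/p^k\Z$ satisfies $T_n^{\circ j}(x)=x$ exactly when $x$ is periodic and its exact period divides $j$. Indeed, if the exact period $r$ divides $j$, then $x$ is fixed by the $j$th iterate. Conversely, if $x$ is fixed by the $j$th iterate, then $x$ is periodic, and the set of $i\ge1$ with $T_n^{\circ i}(x)=x$ consists precisely of the positive multiples of its least element. The usual division-with-remainder argument shows this, since the iterates form a monoid action. The exact-period sets are disjoint, so
\[
  N_{k,p}(n^j)=\sum_{r\mid j}P_r^{(k)}\qquad(j\ge1).
\]

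Third, I will apply Möbius inversion on the divisor lattice of positive integers. Writing $g(j)=N_{k,p}(n^j)$ and $f(r)=P_r^{(k)}$, the identity $g=f*1$ gives $f=g*\Mob$, which is~\eqref{eq:Prk-mobius}. All the sums are finite, so no convergence issue arises.

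I expect no step to be a real obstacle. The only point needing care is the bookkeeping in the first step: the parameters entering Theorem~\ref{thm:complete-fixed-count} depend on the degree. In particular, the condition $[3\nmid m]$ and the value $s=\nu_3(m^2-1)$ must be evaluated at $m=n^j$, not at $n$. By~\eqref{eq:LTE}, this value of $s$ can differ from $\nu_3(n^2-1)$ when $3\mid j$.
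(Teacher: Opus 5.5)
Your proposal is correct and follows the paper's own argument. The paper likewise identifies the fixed set of $T_n^{\circ j}$ with that of $T_{n^j}$, sorts points by exact period to get $N_{k,p}(n^j)=\sum_{\ell\mid j}P_\ell^{(k)}$, and applies M\"obius inversion; your caveat that $s$ must be recomputed as $\nu_p(n^{2j}-1)$ is the right bookkeeping point (the parity factor and the condition $3\nmid n^j$ do not in fact change with $j$).
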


\begin{proof}
A point is fixed by $T_n^{\circ j}=T_{n^j}$ if and only if its exact period divides $j$. Thus
\[
  N_{k,p}(n^j)=\sum_{\ell\mid j}P_\ell^{(k)},
\]
and M\"obius inversion gives~\eqref{eq:Prk-mobius}.
\end{proof}

For each fixed period, these counts eventually become constant. The next corollary gives a sufficient level for stabilization.

\begin{corollary}[Eventual stabilization]\label{cor:Pr-stabilization}
Let $p$ be an odd prime, $n\ge2$, and $r\ge1$. Put
\[
  k_0(r)=\nup(n^{2r}-1)+1+[p=3][3\nmid n].
\]
Then
\[
  P_r^{(k)}=P_r^{(k_0(r))}\qquad(k\ge k_0(r)).
\]
\end{corollary}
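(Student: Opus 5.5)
Use the Möbius formula of Corollary~\ref{cor:Prk-mobius}: it suffices to show that $N_{k,p}(n^j)$ is constant in $k$ for $k\ge k_0(r)$, for every divisor $j$ of $r$. Then each term of \eqref{eq:Prk-mobius} stabilizes, and so does $P_r^{(k)}$.

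Fix $j\mid r$ and put $m=n^j$ and $s_j=\nu_p(m^2-1)=\nu_p(n^{2j}-1)$. Since $n^{2j}-1$ divides $n^{2r}-1$, we get $s_j\le s_r:=\nu_p(n^{2r}-1)$. This also covers $p\mid n$: then $s_j=s_r=0$ and $k_0(r)=1$. Now read off the $k$-dependence in the lifting form \eqref{eq:complete-fixed-lifting} for degree $m$:
\[
 N_{k,p}(m)=N_{1,p}(m)+d_p(m)\bigl(p^{\min(k-1,s_j)}-1\bigr)+\Delta_{p,m,k}.
\]
The first two terms are constant once $k-1\ge s_j$, hence once $k\ge s_r+1$.

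The correction $\Delta_{p,m,k}=[p=3][3\nmid m][k\ge s_j+2]\,\beta_m 3^{s_j}$ depends on $k$ only through the indicator $[k\ge s_j+2]$. It vanishes unless $p=3$ and $3\nmid n$, since $3\nmid m$ iff $3\nmid n$. In that case the indicator is constantly $1$ once $k\ge s_j+2$, which holds for $k\ge s_r+2=k_0(r)$. In the remaining cases $\Delta=0$ and $k_0(r)=s_r+1$ already suffices. So every $N_{k,p}(n^j)$ with $j\mid r$ is constant for $k\ge k_0(r)$, and the conclusion follows.

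The only real point to check is the monotonicity $s_j\le s_r$ for $j\mid r$, which makes one threshold work uniformly for all divisors. Everything else is direct reading of Theorem~\ref{thm:complete-fixed-count}; no sharpness of $k_0(r)$ is claimed.
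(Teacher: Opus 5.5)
Your proof is correct and follows the paper's argument: M\"obius inversion, the bound $s_j\le\nu_p(n^{2r}-1)$ coming from $n^{2j}-1\mid n^{2r}-1$, and separate thresholds $s_j+1$ for the main term and $s_j+2$ for the ternary correction. The only cosmetic difference is that you read the $k$-dependence off the lifting form \eqref{eq:complete-fixed-lifting}, where the $\min(k-1,s_j)$ makes stabilization immediate, whereas the paper reads it off the gcd form via Lemma~\ref{lem:gcd-saturation}.
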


\begin{proof}
For $j\mid r$, put $s_j=\nup(n^{2j}-1)$. Since $n^{2j}-1$ divides $n^{2r}-1$, we have $s_j\le\nup(n^{2r}-1)$. By Lemma~\ref{lem:gcd-saturation}, the gcd terms in Theorem~\ref{thm:complete-fixed-count}, applied with degree $n^j$, are constant for $k\ge s_j+1$. If $p=3$ and $3\nmid n$, the boundary correction is constant for $k\ge s_j+2$; otherwise it is absent. Hence every summand in~\eqref{eq:Prk-mobius} is constant for $k\ge k_0(r)$, proving the assertion. When $p\mid n$, the valuation and the bracket term both vanish, so $k_0(r)=1$, in agreement with Proposition~\ref{prop:p-divides-n}.
\end{proof}

These results give the possible periods and the numbers of points and cycles of each period modulo every odd prime power. Classifying the trees of nonperiodic points entering the cycles for arbitrary degrees remains a separate problem. At $p=2$, division by $2$ is unavailable in the residue ring, so the averaging parametrization requires a different local analysis. For results at powers of $2$, see Fan--Liao~\cite{FanLiao2016} and Lu--Dai--Li~\cite{LuDaiLi2026}.

\section*{Declaration of competing interest}
The authors declare no competing interests.

\section*{Funding}
This research did not receive any specific grant from funding agencies in the
public, commercial, or not-for-profit sectors.

\section*{Data availability}
No external datasets were used. All results are proved in the manuscript.

\section*{Declaration of generative AI and AI-assisted technologies in the manuscript preparation process}
During the preparation of this work, the authors used ChatGPT and Codex
(OpenAI) and Claude (Anthropic) for language editing, manuscript organization,
and reference checking. These tools also provided suggestions for intermediate
mathematical derivations and helped draft code for direct enumeration of fixed
and periodic points modulo prime powers. The computations were used for
internal consistency checks and do not form part of the proofs. The authors
reviewed and edited all AI-assisted material, independently verified the
mathematical statements and proofs, and take full responsibility for the
content of the manuscript.

\end{document}